\newcommand{\R}{\mathbb R}
\newcommand{\N}{\mathbb N}
\newcommand{\D}{\mathcal D}
\newcommand{\s}{\mathcal S}
\newcommand{\1}{\mathbf 1}
\newcommand{\Exp}{\operatorname{Exp}}
\newcommand{\Gam}{\operatorname{Gamma}}
\newcommand{\Binom}{\operatorname{Binomial}}
\newcommand{\Bernoulli}{\operatorname{Bernoulli}}
\newcommand{\Pareto}{\operatorname{Pareto}}
\newcommand{\Levy}{\operatorname{L\acute{e}vy}}
\newcommand{\Weibull}{\operatorname{Weibull}}
\newcommand{\SkewNormal}{\operatorname{SkewNormal}}
\newcommand{\ChiSquared}{\chi^2}
\newcommand{\Skew}{\operatorname{Skew}}
\DeclareMathOperator*{\argmin}{arg\,min}
\newtheorem{theorem}{Theorem}[section]
\newtheorem{corollary}[theorem]{Corollary}
\newtheorem{lemma}[theorem]{Lemma}
\newtheorem{proposition}[theorem]{Proposition}
\theoremstyle{definition}
\newtheorem{definition}[theorem]{Definition}
\theoremstyle{remark}
\newtheorem{example}[theorem]{Example}
\newtheorem{remark}[theorem]{Remark}
\numberwithin{theorem}{section}
\numberwithin{equation}{section}
\numberwithin{theorem}{section}
\begin{document}


\title{Extensions of True Skewness for \\ Unimodal Distributions}
\author{Yevgeniy Kovchegov}
\address{Oregon State University}
\email{kovchegy@math.oregonstate.edu}
\author{Alex Negr\'on}
\address{Illinois Institute of Technology}
\email{alexnegron18@gmail.com}
\author{Clarice Pertel}
\address{Cornell University}
\email{cep87@cornell.edu}
\author{Christopher Wang}
\address{Columbia University}
\email{cyw2124@columbia.edu}

\date{\today}

\begin{abstract}
	A 2022 paper \cite{Kovchegov21} introduced the notion of true positive and negative skewness for continuous random variables via Fr\'echet $p$-means. In this work, we find novel criteria for true skewness, establish true skewness for the Weibull, L\'evy, skew-normal, and chi-squared distributions, and discuss the extension of true skewness to discrete and multivariate settings. Furthermore, some relevant properties of the $p$-means of random variables are established.
\end{abstract}

\keywords{skewness, true skewness, Frechet mean, stochastic dominance, skew-normal distribution, L\'evy distribution, Weibull distribution}

\maketitle
\markboth{Kovchegov, Negr\'on, Pertel, Wang}{Extensions of True Skewness}


\tableofcontents


\section{Introduction}\label{S:Introduction}

	\noindent A commonly accepted measure for the skewness of a probability distribution is given by 
	\emph{Pearson's moment coefficient of skewness}, or the standarized third central moment 
	$$\gamma := E\left[\left(\frac{X-\mu}{\sigma}\right)^3\right],$$
	where $X$ has mean $\mu$ and variance $\sigma^2$. Usually, we say that a distribution is positively skewed if $\Skew[X]>0$ and negatively skewed if $\Skew[X]<0$. It is also expected that positively skewed distributions satisfy the \emph{mean-median-mode inequalities}
	$$\text{mode} < \text{median} < \text{mean}$$
	while negatively skewed distributions satisfy the reverse inequalities
	$$\text{mean} < \text{median} < \text{mode}.$$
	However, a distribution with positive moment coefficient of skewness does not always satisfy the mean-median-mode inequalities. For instance, the Weibull distribution with shape parameter $3.44<\beta<3.60$ has positive moment coefficient of skewness but satisfies the reversed mean-median-mode inequalities, corresponding to negative skew \cite{Groeneveld86}. (For other counterexamples, see \cite{Abadir05}.) This discrepancy is important when comparing Pearson's moment coefficient with other measures of skewness, such as \emph{Pearson's first skewness coefficient}
	$$\frac{\text{mean}-\text{mode}}{\text{standard deviation}}$$\\
	and \emph{Pearson's second skewness coefficient}\\
	$$3\times\frac{\text{mean}-\text{median}}{\text{standard deviation}}.$$\\
	The direction of skewness can therefore be inconsistent between different skewness measures.
	In 2022, Kovchegov \cite{Kovchegov21} introduced the notion of \emph{true positive and negative skewness} to unify Pearson's coefficients in determining the sign of skewness. It relies on the idea that for a positively skewed distributions, the left part of the distribution should stochastically dominate the right part, resulting in a left tail that ``spreads shorter'' and a right tail that ``spreads longer.'' We use a class of centroids known as $p$-means. See \cite{Kovchegov21}.
	
	\begin{definition}[Kovchegov \cite{Kovchegov21}, 2022]
		For $p\in[1,\infty)$ and random variable $X$ with finite $(p-1)$-st moment, the \textbf{$p$-mean}, of $X$ is the unique solution $\nu_p$ of the equation
		\begin{equation}\label{nup def 2}
			E[(X-\nu_p)_+^{p-1}] = E[(\nu_p-X)_+^{p-1}].
		\end{equation}
		If moreover $X$ has finite $p$-th moment, the notion of \textbf{$p$-mean} in \eqref{nup def 2} is equivalent to the \textbf{Fr\'echet 
		$p$-mean} defined as
		\begin{equation}\label{nup def 1}
			\nu_p = \argmin_{a\in\R} E|X-a|^p.
		\end{equation}
		For $p>1$, the uniqueness of the \textbf{$p$-mean} $\nu_p$ follows from the fact that $$E[(a-X)_+^{p-1}]-E[(X-a)_+^{p-1}]$$ is a strictly increasing 
		continuous function of $a$. Occasionally, we write $\nu(p)$ to emphasize that $\nu_p$ is a function
		of $p$.
	\end{definition}
	
	\noindent Notice that identically distributed random variables have the same $p$-means and that $\nu_1$ and $\nu_2$ correspond to the median and mean of the distribution, respectively. For a random variable $X$ let
	$$\D_X:=\big\{p\ge1:E\big[|X|^{p-1}\big]<\infty\big\}$$
	be the domain of $\nu_p$. Notice that $p$ is a real number; it does not have to be an integer. If $X$ has a unique mode, then we denote it by $\nu_0$. In this case, we include $0$ in the set $\D_X$. We omit the subscript $X$ when the random variable is unambiguous.
	
	\begin{remark}
		For distributions which are ``nice enough,'' the domain $\D_X$ can be extended in a well-defined way
		to include values of $p$ in the interval $(0,1)$. The conditions for which this is permissible is 
		discussed in \cite{Kovchegov21}. In this work, we limit our consideration to $p\ge1$.
	\end{remark}

	\begin{definition}[Kovchegov \cite{Kovchegov21}, 2022]
		We say a random variable $X$ (resp. its distribution and density) witha uniquely defined median $\nu_1$ is \textbf{truly positively skewed} if
		$\nu_p$ is a strictly increasing function of $p$ in $\D$, provided $\D$ has non-empty interior.
		Analogously, $X$ is \textbf{truly negatively skewed} if $\nu_p$ is a strictly decreasing function of $p$
		in $\D$.
	\end{definition}

	\begin{remark}\label{TMPS}
		It is possible that, for a unimodal distribution, $\nu_p$ is strictly increasing only on
		$\D\setminus\{0\}$ and that there exists $p\in\D\setminus\{0\}$ such that $\nu_p<\nu_0$. 
		Kovchegov \cite{Kovchegov21} differentiates between this case, which is referred to in 
		that work as true positive skewness, and the case where $\nu_p>\nu_0$ holds for all $p \in \D$, which is 
		referred to as the stronger \emph{true mode positive skewness}. Because we consider only 
		unimodal distributions, for simplicity we take ``true positive skewness'' to mean true mode 
		positive skewness in the sense of Kovchegov \cite{Kovchegov21}, unless explicitly mentioned otherwise.
	\end{remark}

	\noindent 
	Consider $X$ that has unique mode and median.
	Naturally, Pearson’s first skewness coefficient is positive if and only if $\nu_2>\nu_0$ and Pearson’s second skewness coefficient is positive if and only if $\nu_2>\nu_1$. 
	Additionally, it was noticed in \cite{Kovchegov21} that Pearson's moment coefficient of skewness $\gamma$ is positive if and only if $\nu_4>\nu_2$. 
	Now, true positive skewness guarantees $\nu_0<\nu_1<\nu_2<\nu_4$ provided finiteness of the corresponding moments. Thus, Pearson's first and second skewness coefficients and Pearson's moment coefficient of skewness coincide in sign. Therefore, the direction of skewness is unified across several different criteria under true positive skewness.

	\begin{remark}\label{rem:infmoment}
	An additional advantage of the notion of true positive skewness is that it allows us to characterize the skewness of distributions that have infinite integer moments. Indeed, each of Pearson's skewness coefficients requires at least a finite mean, which excludes a large class of heavy-tailed distributions from the classical study of skewness.
	\end{remark}

\subsection{Existing criteria for true skewness}\label{Ss:Existing Criteria}

	In general, demonstrating the true skewness of an arbitrary distribution requires a detailed analysis of the solutions of a class of integral equations \eqref{nup def 2}. Such analysis is simplified by introducing arguments from stochastic ordering.
	
	\begin{definition}\label{stochastic dominance}
		For random variables $X$ and $Y$, we say that $X$ \textbf{stochastically dominates} $Y$ 
		(resp. the distribution of $X$ stochastically dominates the distribution of $Y$) if the 
		cumulative distribution function $F_X$ of $X$ is majorized by the 
		cumulative distribution function $F_Y$ of $Y$, i.e. if $F_X(x)\le F_Y(x)$ holds for every $x\in\R$. We say 
		that the stochastic dominance is \emph{strict} if there exists a point $x_0$ for which 
		$F_X(x_0)<F_Y(x_0)$.
	\end{definition}

	If $X$ is a continuous random variable with a density function $f$, then we may rewrite \eqref{nup def 2} as an equality of integrals, the quantities of which we define as a normalizing term $H_p$, i.e.,
	\begin{equation}\label{nup def 2 integral}
		H_p := \int_0^{\nu_p-L} y^{p-1}f(\nu_p-y)\,dy = \int_0^{R-\nu_p} y^{p-1}f(\nu_p+y)\,dy,
			\qquad p\in\D\setminus\{0\},
	\end{equation}
	where $X$ has support on the possibly infinite interval $(L,R)$. The following result establishes the relationship between true positive skewness and stochastic dominance of the left and right parts of a distribution. 
	
	\begin{theorem}[Kovchegov \cite{Kovchegov21}, 2022]\label{Kovchegov T1}
		Let $X$ be a continuous random variable supported on $(L,R)$ with density function $f$. 
		For fixed $p\in\D$, if the distribution with density function $y\mapsto\frac1{H_p}y^{p-1}
		f(\nu_p+y)\1_{(0,R-\nu_p)}(y)$ exhibits strict stochastic dominance over the distribution
		with density function $y\mapsto\frac1{H_p}y^{p-1}f(\nu_p-y)\1_{(0,\nu_p-L)}(y)$, then 
		$\nu(\cdot)$ is increasing at $p$.
	\end{theorem}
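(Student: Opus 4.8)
The plan is to recast the defining equation \eqref{nup def 2} as the vanishing of a single function and then argue by implicit differentiation in $p$. Define
$$\Phi(a,p) := E\big[(a-X)_+^{p-1}\big] - E\big[(X-a)_+^{p-1}\big],$$
so that $\Phi(\nu_p,p)=0$ for every $p\in\D\setminus\{0\}$, and recall from the Definition that $a\mapsto\Phi(a,p)$ is strictly increasing, so $\partial_a\Phi(\nu_p,p)>0$. If I can show $\partial_p\Phi(\nu_p,p)<0$, then for $q>p$ sufficiently close to $p$ we get $\Phi(\nu_p,q)<\Phi(\nu_p,p)=0$; since $\Phi(\cdot,q)$ is strictly increasing with unique root $\nu_q$, this forces $\nu_q>\nu_p$, and symmetrically $\nu_q<\nu_p$ for $q<p$. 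Hence the whole statement reduces to the single inequality $\partial_p\Phi(\nu_p,p)<0$.

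Next I would compute $\partial_p\Phi$ by differentiating under the integral sign. Writing the two expectations as the integrals of \eqref{nup def 2 integral} and using $\partial_p y^{p-1}=y^{p-1}\ln y$, I obtain
$$\partial_p\Phi(\nu_p,p) = \int_0^{\nu_p-L} y^{p-1}\ln y\,f(\nu_p-y)\,dy - \int_0^{R-\nu_p} y^{p-1}\ln y\,f(\nu_p+y)\,dy.$$
Factoring out the normalizing constant $H_p>0$ turns each integral into an expectation of $\ln$ against the densities named in the statement: letting $Y_L,Y_R$ have densities $y\mapsto\frac1{H_p}y^{p-1}f(\nu_p-y)\1_{(0,\nu_p-L)}(y)$ and $y\mapsto\frac1{H_p}y^{p-1}f(\nu_p+y)\1_{(0,R-\nu_p)}(y)$ respectively, this reads
$$\partial_p\Phi(\nu_p,p) = H_p\big(E[\ln Y_L]-E[\ln Y_R]\big).$$

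It then remains to prove $E[\ln Y_R]>E[\ln Y_L]$, and this is precisely where the stochastic-dominance hypothesis does the work. Since $\ln$ is strictly increasing and $Y_R$ strictly stochastically dominates $Y_L$, the standard characterization of stochastic order yields $E[\ln Y_R]\ge E[\ln Y_L]$ with strictness under strict dominance. Concretely I would integrate by parts to get
$$E[\ln Y_R]-E[\ln Y_L]=\int_0^\infty \big(F_{Y_L}(y)-F_{Y_R}(y)\big)\frac{dy}{y},$$
where the integrand is nonnegative by Definition \ref{stochastic dominance} and strictly positive on an interval because the dominance is strict (using right-continuity of the distribution functions to promote strictness at one point to an interval). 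As $1/y>0$ there, the integral is strictly positive, so $\partial_p\Phi(\nu_p,p)=H_p(E[\ln Y_L]-E[\ln Y_R])<0$, which completes the argument via the first paragraph.

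The main obstacle I anticipate is the purely analytic bookkeeping: justifying differentiation under the integral sign and verifying that $E[\ln Y_L]$ and $E[\ln Y_R]$ are finite. Near $y=0$ the factor $\ln y$ is singular, and one must exhibit an integrable dominating function (for $p>1$ the product $y^{p-1}\ln y$ vanishes at the origin, whereas the borderline case $p=1$ requires local integrability of $\ln y$ against the density), while near the possibly infinite endpoint $R-\nu_p$ one controls $y^{p-1}\ln y\,f$ using the moment condition $p\in\D$. I expect these estimates to be routine under mild regularity of $f$, leaving the stochastic-dominance comparison as the genuine conceptual content.
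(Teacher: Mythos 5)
Your proposal is correct and matches the paper's own route: the paper (via the remark following the theorem) identifies the crux as precisely the inequality \eqref{dnup numerator}, which you derive by implicit differentiation of $\Phi(a,p)=E[(a-X)_+^{p-1}]-E[(X-a)_+^{p-1}]$, and your final step of applying strict stochastic dominance to the strictly increasing function $\log$ is exactly how that inequality is obtained there. The integrability caveats you flag near $y=0$ and at the right endpoint are the expected routine ones and do not affect the argument.
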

	 
	\begin{remark}
		In particular, Theorem \ref{Kovchegov T1} explains why stochastic dominance of the density function $\frac1{H_p}y^{p-1}f(\nu_p+y)\1_{(0,R-\nu_p)}(y)$
		representing the right part of $f$
		over the density function $\frac1{H_p}y^{p-1}f(\nu_p-y)\1_{(0,\nu_p-L)}(y)$ representing the left part of $f$ for all $p\in\D$ implies 
		positive Pearson's moment coefficient of skewness ($\nu_4>\nu_2$) and positive Pearson’s second skewness coefficient ($\nu_2>\nu_1$).
		Importantly, Theorem \ref{Kovchegov T1} provides a mathematical justification why skewness to the right, as expressed through stochastic domination,
		implies true positive skewness, yielding positivity of the above listed Pearson's coefficients of skewness.		
 	\end{remark}

	\begin{remark}
		The crux of the proof of Theorem \ref{Kovchegov T1} is that $\nu_p$ is increasing if and only if
		\begin{equation}\label{dnup numerator}
			\int_0^{R-\nu_p}y^{p-1}\log y\ f(\nu_p+y)\,dy 
				- \int_0^{\nu_p-L}y^{p-1}\log y\ f(\nu_p-y)\,dy > 0.
		\end{equation}
		Therefore, weaker versions of stochastic ordering between the left and right parts 
		actually suffice for Theorem \ref{Kovchegov T1}, in particular the concave ordering (see,
		e.g., \cite{Muller02, Rojo13}).
	\end{remark}
	
	\noindent We will frequently use the following criterion for true positive skewness.
	
	\begin{lemma}[Kovchegov \cite{Kovchegov21}, 2022]\label{Kovchegov L2}
		Let $X$ be a continuous random variable supported on the possibly infinite interval $(L,R)$ 
		with density function $f$. For fixed $p\in\D$, suppose there exists $c>0$ such that 
		\begin{enumerate}[label=(\alph*)]
			\item $f(\nu_p-c)=f(\nu_p+c)$;
			\item $f(\nu_p-x)>f(\nu_p+x)$ for $x\in(0,c)$; and
			\item $f(\nu_p-x)<f(\nu_p+x)$ for $x>c$.
		\end{enumerate}
		If $\nu_p-L\le R-\nu_p$, or if $L=-\infty$ and $R=\infty$, then $\nu(\cdot)$ is increasing at $p$.
	\end{lemma}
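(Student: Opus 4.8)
The plan is to apply Theorem \ref{Kovchegov T1} by verifying that the normalized right-part density strictly stochastically dominates the normalized left-part density. Write $a := \nu_p - L$ and $b := R - \nu_p$ for the two half-widths, so the hypothesis reads $a \le b$ (or $a = b = \infty$). Set
$$\phi(y) := y^{p-1} f(\nu_p + y)\,\1_{(0,b)}(y), \qquad \psi(y) := y^{p-1} f(\nu_p - y)\,\1_{(0,a)}(y),$$
the unnormalized right- and left-part densities. By the definition of $\nu_p$ in \eqref{nup def 2 integral}, both integrate to $H_p$, so $\int_0^\infty [\phi(y) - \psi(y)]\,dy = 0$.

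First I would analyze the sign of $\Delta(y) := \phi(y) - \psi(y)$. On $(0,a)$ both indicators are active (using $a \le b$) and $\Delta(y) = y^{p-1}[f(\nu_p + y) - f(\nu_p - y)]$, whose sign is governed by conditions (b) and (c): negative for $y < c$ and positive for $y > c$. On $(a,b)$ (empty when $a=b$) only the right part survives, giving $\Delta(y) = y^{p-1} f(\nu_p + y) \ge 0$; and $\Delta \equiv 0$ beyond $b$. The crucial consequence is the single-crossing property: with $y^* := \min(a,c)$ one has $\Delta \le 0$ on $(0,y^*)$ and $\Delta \ge 0$ on $(y^*,\infty)$. (Here the support hypothesis is essential — were $a > b$, the left part would contribute negative mass on $(b,a)$ at large $y$ and could destroy single crossing.)

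Next I would convert single crossing plus vanishing total mass into stochastic dominance. Writing $\Phi(t) := \int_0^t \phi$ and $\Psi(t) := \int_0^t \psi$ for the unnormalized distribution functions, I want $\Phi(t) \le \Psi(t)$ for all $t$. For $t \le y^*$ this is immediate since $\int_0^t \Delta \le 0$. For $t > y^*$, I would use $\int_0^\infty \Delta = 0$ to write $\int_0^t \Delta = -\int_t^\infty \Delta \le 0$, the last inequality because $\Delta \ge 0$ on $(t,\infty) \subseteq (y^*,\infty)$. Since $\Delta < 0$ strictly on $(0,y^*)$ by condition (b), the dominance is strict. Dividing by $H_p$ yields strict stochastic dominance of the normalized right-part density over the normalized left-part density, and Theorem \ref{Kovchegov T1} then gives that $\nu(\cdot)$ is increasing at $p$. (Equivalently, one could feed strict dominance into \eqref{dnup numerator} directly, since $\log$ is increasing.)

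I expect the main obstacle to be the bookkeeping of the support mismatch: establishing single crossing rigorously when $a < b$, where one must track the transition at $y = a$ where the left-part indicator switches off and confirm that this transition only makes $\Delta$ more positive rather than introducing a spurious sign change. This is precisely where the hypothesis $\nu_p - L \le R - \nu_p$ is used; the symmetric-tails case $L = -\infty,\, R = \infty$ is the clean sub-case in which $a = b = \infty$ and the crossing occurs exactly at $c$.
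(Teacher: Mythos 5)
Your proof is correct: the single-crossing structure of $\Delta$ on either side of $y^*=\min(a,c)$, combined with the equal total masses from \eqref{nup def 2 integral}, gives $\int_0^t\Delta\le 0$ for all $t$ (strictly for small $t$), i.e.\ strict stochastic dominance of the normalized right part over the left part, and Theorem \ref{Kovchegov T1} then yields the conclusion. This is essentially the intended route: the paper itself does not reprove Lemma \ref{Kovchegov L2} (it is quoted from \cite{Kovchegov21}), but the same integrate-and-compare argument is exactly what the paper sketches in its proof of Proposition \ref{relaxed Kovchegov P2}, so your reconstruction matches the approach in spirit and in detail.
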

	
	\noindent The following is a special case.
	
	\begin{proposition}[Kovchegov \cite{Kovchegov21}, 2022]\label{Kovchegov P2}
		If $f$ is strictly decreasing on its support, then $X$ is truly positively skewed.
	\end{proposition}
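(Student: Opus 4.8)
The plan is to verify, for each $p\in\D\setminus\{0\}$, the strict stochastic-dominance hypothesis of Theorem \ref{Kovchegov T1}; this yields that $\nu(\cdot)$ is increasing at every such $p$, and together with a mode comparison it gives true (mode) positive skewness. Two preliminary observations streamline everything. First, a strictly decreasing density cannot be supported on all of $\R$: if $f$ were strictly decreasing and positive on $(-\infty,R)$, then $f(x)\ge f(x_0)>0$ for all $x<x_0$, forcing $\int_{-\infty}^{x_0}f=\infty$. Hence $L$ is finite, the supremum of $f$ is approached as $x\to L^+$ so that $\nu_0=L$, and the alternative ``$L=-\infty,\ R=\infty$'' of Lemma \ref{Kovchegov L2} never arises here. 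Second, the balance function $a\mapsto E[(a-X)_+^{p-1}]-E[(X-a)_+^{p-1}]$ is strictly increasing with unique zero $\nu_p$, is negative for $a\le L$, and is positive for $a\ge R$; therefore $\nu_p\in(L,R)$, and in particular $\nu_p>L=\nu_0$, which settles the mode part of the definition for free.

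The key step is to show that $\nu_p$ lies in the left half of the support, i.e. $\nu_p-L\le R-\nu_p$. When $R=\infty$ this is automatic, so I would assume $R<\infty$ and set $m=(L+R)/2$. Substituting $x=m\mp y$ in the two integrals gives
\begin{equation*}
  E[(m-X)_+^{p-1}]-E[(X-m)_+^{p-1}]
  =\int_0^{(R-L)/2} y^{p-1}\bigl(f(m-y)-f(m+y)\bigr)\,dy .
\end{equation*}
Since $m-y<m+y$ both lie in $(L,R)$ and $f$ is strictly decreasing, the integrand is strictly positive, so the balance function is positive at $m$. By its strict monotonicity and the fact that $\nu_p$ is its zero, $\nu_p<m$, whence $\nu_p-L< R-\nu_p$.

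With $a:=\nu_p-L\le b:=R-\nu_p$ in hand, I would compare the cumulative distribution functions of the right and left reflected densities from \eqref{nup def 2 integral}, namely $\mu_R(dy)=\frac1{H_p}y^{p-1}f(\nu_p+y)\1_{(0,b)}(y)\,dy$ and $\mu_L(dy)=\frac1{H_p}y^{p-1}f(\nu_p-y)\1_{(0,a)}(y)\,dy$. For $0<t\le a$, strict monotonicity gives $f(\nu_p+y)<f(\nu_p-y)$ on $(0,t)$, hence
\begin{equation*}
  \mu_R\bigl((0,t)\bigr)=\frac1{H_p}\int_0^t y^{p-1}f(\nu_p+y)\,dy
  <\frac1{H_p}\int_0^t y^{p-1}f(\nu_p-y)\,dy=\mu_L\bigl((0,t)\bigr);
\end{equation*}
for $a<t\le b$ the left measure is already saturated, $\mu_L((0,t))=1\ge\mu_R((0,t))$; and for $t\ge b$ both equal $1$. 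Thus $\mu_R$ strictly stochastically dominates $\mu_L$, and Theorem \ref{Kovchegov T1} gives that $\nu(\cdot)$ is increasing at $p$. Ranging over $p\in\D\setminus\{0\}$ proves the proposition.

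I expect the main obstacle to be exactly the localization step $\nu_p-L\le R-\nu_p$: without it the dominance comparison fails on a middle band where $\mu_R$ is saturated but $\mu_L$ is not. It is also worth flagging that Lemma \ref{Kovchegov L2} cannot be quoted verbatim, because a strictly decreasing $f$ never produces an interior crossover $c$ with $f(\nu_p-c)=f(\nu_p+c)$; here the sign reversal between the reflected densities occurs only because the left support terminates at $L$, which is precisely what the saturation argument above captures.
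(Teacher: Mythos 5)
Your proof is correct and follows essentially the same route as the paper's own argument (given for the relaxed Proposition \ref{relaxed Kovchegov P2}, which subsumes this statement): finiteness of $L$, the localization $\nu_p<(L+R)/2$ so that $\nu_p-L<R-\nu_p$, and then strict stochastic dominance of the right part over the left part by comparing the integrated densities, feeding into Theorem \ref{Kovchegov T1}. Your added points — that $\nu_0=L$ settles the mode comparison and that Lemma \ref{Kovchegov L2} cannot be invoked verbatim for a strictly decreasing $f$ — are sound refinements of that same approach.
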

	
	The requirement of strict monotonicity in the proposition can be relaxed, which will be necessary when 
	we consider uniform mixtures in a later section.
	
	\begin{proposition}\label{relaxed Kovchegov P2}
		 If $f$ is non-increasing on its support $(L,R)$, and there exist any two points $y_1,y_2\in(L,R)$, 
		 $y_1<y_2$, such that $f(y_1)>f(y_2)$, then $X$ is truly positively skewed.
	\end{proposition}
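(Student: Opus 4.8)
The plan is to apply Theorem~\ref{Kovchegov T1}: for each $p$ in the interior of $\D$ I will show that the right-part density $\frac1{H_p}y^{p-1}f(\nu_p+y)\1_{(0,R-\nu_p)}(y)$ strictly stochastically dominates the left-part density $\frac1{H_p}y^{p-1}f(\nu_p-y)\1_{(0,\nu_p-L)}(y)$. Writing $a:=\nu_p-L$ and $b:=R-\nu_p$ for the left and right reaches and $\psi(y):=f(\nu_p-y)-f(\nu_p+y)$ (with $f$ taken to vanish off $(L,R)$), the asserted dominance is equivalent to
$$D(t):=\int_0^t y^{p-1}\psi(y)\,dy\ \ge\ 0\qquad\text{for all }t>0,$$
with strict inequality for some $t$. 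Since $\int_0^\infty y^{p-1}\psi(y)\,dy=0$ by the defining identity \eqref{nup def 2 integral} for $\nu_p$, everything reduces to controlling the sign pattern of the integrand.

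First I would record two structural facts. A non-increasing density on $(L,R)$ cannot be integrable when $L=-\infty$, so $L$ is necessarily finite. Moreover $a\le b$: when both endpoints are finite, evaluating the strictly increasing balance function $\Phi(s):=E[(s-X)_+^{p-1}]-E[(X-s)_+^{p-1}]$ at the midpoint $m:=(L+R)/2$ gives $\Phi(m)=\int_0^{(R-L)/2}y^{p-1}\big(f(m-y)-f(m+y)\big)\,dy\ge0$ by monotonicity of $f$, whence its root $\nu_p$ satisfies $\nu_p\le m$, i.e. $a\le b$; and if $R=\infty$ the inequality $a<b=\infty$ is automatic. This is exactly the support hypothesis $\nu_p-L\le R-\nu_p$ that appears in Lemma~\ref{Kovchegov L2}.

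The heart of the argument is a single-crossing observation. On $(0,a)$ both shifted values lie in the support and $\nu_p-y<\nu_p+y$, so $\psi(y)\ge0$ by monotonicity; on $(a,b)$ the left shift has left the support, so $\psi(y)=-f(\nu_p+y)\le0$; and $\psi\equiv0$ beyond $b$. Thus $y^{p-1}\psi(y)$ is nonnegative on $(0,a)$ and nonpositive on $(a,\infty)$, so $D$ rises on $(0,a)$ and falls on $(a,\infty)$; as $D(0)=0$ and $\lim_{t\to\infty}D(t)=0$, we get $D(t)\ge0$ for every $t$, which is the stochastic dominance. I expect this to be the main obstacle conceptually: unlike in Lemma~\ref{Kovchegov L2}, the single crossing is produced not by a crossing of the density itself (there is none, since $f(\nu_p-x)\ge f(\nu_p+x)$ throughout) but by the truncation of the left part at $y=a$, so one must first identify the asymmetric reaches $a\le b$ before the crossing structure is visible.

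It remains to upgrade to strict dominance, and this is where the hypothesis that $f$ is not constant enters. If strictness failed we would have $D\equiv0$, hence $\psi=0$ almost everywhere on $(0,\infty)$. When $a<b$ this forces $f(\nu_p+y)=0$ a.e. on $(a,b)$, i.e. $f=0$ on a subinterval of its own support, a contradiction; when $a=b$ it forces $f(\nu_p-y)=f(\nu_p+y)$ on $(0,a)$, which together with $f(\nu_p-y)\ge f(\nu_p)\ge f(\nu_p+y)$ makes $f$ constant on $(L,R)$, contradicting the existence of $y_1<y_2$ with $f(y_1)>f(y_2)$. Hence the dominance is strict, and Theorem~\ref{Kovchegov T1} yields that $\nu(\cdot)$ is strictly increasing at every interior $p\in\D$; by continuity of $\nu(\cdot)$ this extends to all of $\D\setminus\{0\}$, and since the mode sits at the left endpoint $L$ we also have $\nu_p>\nu_0$ whenever the mode is defined, so $X$ is truly positively skewed.
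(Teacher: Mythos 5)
Your proof is correct and takes essentially the same route as the paper's: both arguments establish that $L$ is finite and that $\nu_p - L \le R - \nu_p$, and then deduce strict stochastic dominance of the right part over the left part so that Theorem \ref{Kovchegov T1} applies at every $p\in\D\setminus\{0\}$. Your write-up is a more detailed rendering of what the paper compresses into one sentence (``integrating each density, applying monotonicity of the integral, and using equation \eqref{nup def 2 integral}''), the only organizational difference being that the paper obtains strictness from a crossing interval inside $(0,\nu_p-L)$ supplied by the non-constancy hypothesis, whereas you first prove weak dominance via the single-crossing of the truncated difference and then rule out equality through the case split $a<b$ versus $a=b$.
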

	
	The strict inequalities in Lemma \ref{Kovchegov L2} can be relaxed in a similar manner.


\section{Results}

\subsection{Properties of $p$-means}\label{Ss:Properties}

In this section, we establish several simple but fundamental properties of $p$-means and their behavior.
Here and throughout this work, let $\nu_p^X$ denote the $p$-mean of a random variable $X$ whenever defined. We use $\nu_p$ when the random variable in question is unambiguous.

	\begin{proposition}\label{bounds}
		Let $X$ be a random variable supported on the possibly infinite interval $(L,R)$. Then 
		$\nu_p\in(L,R)$ for all $p\in\D$.
	\end{proposition}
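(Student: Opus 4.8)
The plan is to reduce the claim to a sign analysis of the function whose unique root defines $\nu_p$. If $L=-\infty$ the bound $\nu_p>L$ is vacuous, and likewise the bound $\nu_p<R$ is vacuous if $R=\infty$; so it suffices to establish $\nu_p>L$ whenever $L$ is finite and $\nu_p<R$ whenever $R$ is finite. By the evident left--right symmetry I would only spell out the former.

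For $p>1$, recall from the Definition that $g(a):=E[(a-X)_+^{p-1}]-E[(X-a)_+^{p-1}]$ is a strictly increasing continuous function of $a$ whose unique zero is $\nu_p$. The idea is to evaluate $g$ at the endpoint $a=L$. Since $X$ is supported on $(L,R)$, we have $X>L$ almost surely, so $(L-X)_+=0$ a.s.\ and the first term vanishes; on the other hand $(X-L)_+^{p-1}=(X-L)^{p-1}>0$ a.s., so $g(L)=-E[(X-L)^{p-1}]<0$. Because $g$ is strictly increasing with $g(\nu_p)=0$, this forces $\nu_p>L$. The symmetric computation at $a=R$ yields $g(R)=E[(R-X)^{p-1}]>0$, hence $\nu_p<R$.

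Two points need care to make this rigorous, and they constitute the only real obstacle. First, I must confirm that $g(L)$ is finite, i.e.\ that $E[(X-L)^{p-1}]<\infty$; this follows from the membership $p\in\D$ together with an elementary bound of the form $(X-L)^{p-1}\le C_p\big(|X|^{p-1}+|L|^{p-1}\big)$ for a constant $C_p$ depending only on $p$, so that the potentially problematic tail at $R=\infty$ is controlled by the finite $(p-1)$-st absolute moment. Second, I must confirm strict positivity of $E[(X-L)^{p-1}]$; since $L=\inf\supp X$, the event $\{X-L>\varepsilon\}$ carries positive probability for some $\varepsilon>0$, which gives the strict inequality. These two observations ensure that $g(L)$ is a well-defined strictly negative number; everything else is immediate from the monotonicity already recorded in the Definition.

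It remains to treat the boundary values $p=1$ and, when $0\in\D$, $p=0$. For $p=1$ the $p$-mean is a median, and the defining relation reduces to $P(X<\nu_1)=P(X>\nu_1)$; were $\nu_1\le L$ we would have $P(X>\nu_1)=1$, and were $\nu_1\ge R$ we would have $P(X<\nu_1)=1$, each contradicting the equality, so $\nu_1\in(L,R)$. For $p=0$, the point $\nu_0$ is by definition the unique mode, a point of $\supp X$, which therefore already lies in $(L,R)$. Combining the three cases gives $\nu_p\in(L,R)$ for every $p\in\D$.
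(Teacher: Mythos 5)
Your proof is correct and is essentially the paper's own argument: the paper simply notes that $\nu_p\le L$ would force $E[(\nu_p-X)_+^{p-1}]=0$ while $E[(X-\nu_p)_+^{p-1}]>0$, contradicting the defining equation \eqref{nup def 2} --- the same endpoint sign analysis you run via $g$, with your monotonicity phrasing, finiteness check, and separate $p=1$ case being harmless elaborations. (Both proofs gloss over $p=0$: the mode can sit on the boundary, e.g.\ the exponential distribution has $\nu_0=L=0$, but that is a defect in the proposition's statement rather than of your argument, and the paper's proof does not address it either.)
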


	\noindent The following fact was used implicitly in \cite{Kovchegov21}, but we prove it for completeness.

	\begin{proposition}\label{nup C1}
		The map $p\mapsto\nu_p$ is continuously differentiable on $\D\cap(1,\infty)$.
	\end{proposition}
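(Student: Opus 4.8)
The plan is to realize $\nu_p$ as an implicit function and invoke the implicit function theorem. Define, for $p>1$ and $a\in\R$,
$$\Phi(p,a):=E\big[(a-X)_+^{p-1}\big]-E\big[(X-a)_+^{p-1}\big],$$
so that by \eqref{nup def 2} the $p$-mean is exactly the zero $\Phi(p,\nu_p)=0$, and recall from the definition that $a\mapsto\Phi(p,a)$ is continuous and strictly increasing. Fix $p_0\in\D$ with $1<p_0<\sup\D$; since $\{s\ge0:E[|X|^s]<\infty\}$ is an interval, I may choose $\varepsilon>0$ and $\eta>0$ with $p_0+\varepsilon+\eta\in\D$, and work on a compact box $[p_0-\varepsilon,p_0+\varepsilon]\times[\nu_{p_0}-\delta,\nu_{p_0}+\delta]$. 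It then suffices to show $\Phi$ is $C^1$ near $(p_0,\nu_{p_0})$ with $\partial_a\Phi(p_0,\nu_{p_0})\ne0$; the theorem delivers a $C^1$ local solution $p\mapsto\nu_p$, and since $p_0$ is arbitrary the claim follows on all interior points (with the right endpoint, if $\sup\D\in\D$, handled by a one-sided limiting argument).

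Next I would differentiate under the expectation. Using $\tfrac{d}{dp}y^{p-1}=y^{p-1}\log y$ for $y>0$, and $\tfrac{d}{da}(a-X)_+^{p-1}=(p-1)(a-X)^{p-2}$ on $\{X<a\}$ (and $0$ on $\{X>a\}$), the formal expressions are
$$\partial_p\Phi(p,a)=E\big[(a-X)_+^{p-1}\log(a-X)_+\big]-E\big[(X-a)_+^{p-1}\log(X-a)_+\big]$$
and
$$\partial_a\Phi(p,a)=(p-1)\,E\big[|X-a|^{p-2}\,\1_{\{X\ne a\}}\big].$$
The passage from difference quotient to derivative under the integral sign is justified, on the compact box, by exhibiting integrable dominating functions and invoking dominated convergence; the same domination simultaneously yields joint continuity of both partials in $(p,a)$.

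The crux is the integrability of those dominating functions. For $\partial_p\Phi$ the integrand $y^{p-1}\log y$ is harmless as $y=(a-X)_+\to0^+$, since $p-1>0$ forces it to $0$; the only real concern is the tail $y\to\infty$, where the integrand behaves like $|X|^{p-1}\log|X|$. Here I would exploit the fact that $p_0$ lies in the \emph{interior} of $\D$: the bound $|X|^{p-1}\log|X|\le C_\eta\,|X|^{\,p_0+\varepsilon-1+\eta}$ for large $|X|$ supplies a finite dominating moment uniformly over $p\le p_0+\varepsilon$, because $p_0+\varepsilon+\eta\in\D$. For $\partial_a\Phi$ the tail is controlled by $E[|X|^{p-2}]\le 1+E[|X|^{p-1}]<\infty$, and the non-vanishing is clear from $\partial_a\Phi(p_0,\nu_{p_0})=(p_0-1)\,E\big[|X-\nu_{p_0}|^{\,p_0-2}\,\1_{\{X\ne\nu_{p_0}\}}\big]>0$ for non-degenerate $X$.

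With $\Phi\in C^1$ near $(p_0,\nu_{p_0})$ and $\partial_a\Phi(p_0,\nu_{p_0})\in(0,\infty)$, the implicit function theorem yields a $C^1$ map $p\mapsto\nu_p$ with $\nu'(p)=-\partial_p\Phi(p,\nu_p)\big/\partial_a\Phi(p,\nu_p)$. The step I expect to be most delicate is the \emph{finiteness} of $\partial_a\Phi$ in the range $1<p<2$: there $p-2\in(-1,0)$, so the integrand $|X-a|^{p-2}$ is singular at $a=\nu_p$, and one must verify that $E\big[|X-\nu_p|^{\,p-2}\big]<\infty$. This is immediate when $X$ has a locally bounded density near $\nu_p$ (the integrable-singularity exponent $p-2>-1$ saving the day), which covers the unimodal densities of interest, but in full generality it requires controlling the mass $X$ places near its $p$-mean, and that is where the main technical care is needed.
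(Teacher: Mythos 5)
Your proposal follows essentially the same route as the paper: its proof applies the implicit function theorem to exactly the same function $\Phi(a,p)=E[(X-a)_+^{p-1}]-E[(a-X)_+^{p-1}]$, computes $\partial_a\Phi=-(p-1)\big(E[(X-a)_+^{p-2}]+E[(a-X)_+^{p-2}]\big)$, and concludes via the zero level curve of $\Phi$. If anything, you are more careful than the paper: the tail domination for $\partial_p\Phi$ using interior moments of $\D$, and the finiteness of $E\big[|X-\nu_p|^{p-2}\big]$ for $1<p<2$ that you single out as the delicate point, are both passed over silently in the paper's proof, which simply asserts domination ``since $E|X|^{p-1}<\infty$'' and states without argument that the $a$-partial is finite.
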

	
	\noindent When investigating specific distribution families, we may assume that the scale and location parameters
	are 1 and 0 respectively unless otherwise noted. This is justified by the following, which implies that
	true positive skewness is preserved under positive affine transformations.
	
	\begin{proposition}\label{linearity}
		For any $c,s\in\R$ and $p\in\D_X$, $\nu_p^{cX+s} = c\nu_p^X+s$.
	\end{proposition}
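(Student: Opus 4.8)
The plan is to work directly with the defining equation \eqref{nup def 2} rather than the variational characterization \eqref{nup def 1}, since membership in the domain $\D_X$ only requires a finite $(p-1)$-st moment while \eqref{nup def 1} presupposes a finite $p$-th moment. I would first dispose of the degenerate case $c=0$, where $cX+s\equiv s$ and \eqref{nup def 2} forces $\nu_p^{cX+s}=s=c\nu_p^X+s$, and then assume $c\neq0$. For $c\neq0$ one checks that the domains agree, $\D_{cX+s}=\D_X$, because $E\big[|cX+s|^{p-1}\big]<\infty$ if and only if $E\big[|X|^{p-1}\big]<\infty$; thus $\nu_p^{cX+s}$ is defined exactly when $\nu_p^X$ is. Writing $Y=cX+s$, $m=\nu_p^X$, and $\nu=cm+s$, the goal is then to verify that $\nu$ solves \eqref{nup def 2} for $Y$ and to invoke uniqueness.

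The core of the argument is the algebra of positive parts under the affine map. Since $Y-\nu=c(X-m)$, I would record the identities
\begin{equation*}
	(Y-\nu)_+^{p-1}=
	\begin{cases}
		|c|^{p-1}\,(X-m)_+^{p-1}, & c>0,\\
		|c|^{p-1}\,(m-X)_+^{p-1}, & c<0,
	\end{cases}
	\qquad
	(\nu-Y)_+^{p-1}=
	\begin{cases}
		|c|^{p-1}\,(m-X)_+^{p-1}, & c>0,\\
		|c|^{p-1}\,(X-m)_+^{p-1}, & c<0,
	\end{cases}
\end{equation*}
which follow from $(c\,t)_+=c\,t_+$ for $c>0$ and $(c\,t)_+=|c|\,(-t)_+$ for $c<0$. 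Taking expectations and dividing by $|c|^{p-1}>0$, the equation $E[(Y-\nu)_+^{p-1}]=E[(\nu-Y)_+^{p-1}]$ reduces in both cases to
\begin{equation*}
	E\big[(X-m)_+^{p-1}\big]=E\big[(m-X)_+^{p-1}\big],
\end{equation*}
which holds by the very definition of $m=\nu_p^X$. Hence $\nu=cm+s$ is a solution of \eqref{nup def 2} for $Y$.

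Finally I would appeal to uniqueness: for $p>1$ the solution of \eqref{nup def 2} is unique because $a\mapsto E[(a-Y)_+^{p-1}]-E[(Y-a)_+^{p-1}]$ is strictly increasing and continuous, and for $p=1$ the solution is the median, which is assumed uniquely defined; in either case $\nu_p^{cX+s}=\nu=c\nu_p^X+s$. The only point requiring care is the sign of $c$: for $c<0$ the two sides of the defining equation exchange roles, and the conclusion survives only because \eqref{nup def 2} is symmetric under swapping its left- and right-hand integrands. This symmetry is exactly why the formula $\nu_p^{cX+s}=c\nu_p^X+s$ persists for negative $c$ as well, so that a positive affine map preserves, and a reflection reverses, the monotonicity of $p\mapsto\nu_p$ that defines true skewness.
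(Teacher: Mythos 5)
Your proposal is correct and follows essentially the same route as the paper: verify that $c\nu_p^X+s$ satisfies the defining equation \eqref{nup def 2} for $cX+s$ and conclude by uniqueness. In fact you are somewhat more careful than the paper's own proof, which factors $c^{p-1}$ out of the positive parts and thus implicitly treats only $c>0$, whereas you handle $c<0$ (where the two sides of \eqref{nup def 2} exchange roles) and the degenerate case $c=0$ explicitly.
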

	
	\noindent Next, we consider the asymptotic behavior of $\nu_p$ as $p\to\infty$. This requires $X$ to have finite
	moments of all orders, which clearly holds if $X$ has bounded support. We consider only continuous
	random variables, but analogous results hold in the discrete case.
	
	\begin{proposition}\label{nu->midpoint, bounded}
		Let $X$ be a continuous random variable supported on the finite interval $(L,R)$. Then 
		$\nu_p\to (L+R)/2$ as $p\to\infty$.
	\end{proposition}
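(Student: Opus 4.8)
The plan is to work with the Fréchet $p$-mean characterization $\nu_p=\argmin_{a\in\R}E|X-a|^p$, which is available here because bounded support forces all moments to be finite. The guiding intuition is that as $p\to\infty$ the functional $\big(E|X-a|^p\big)^{1/p}$ behaves like the sup-norm $\max(a-L,R-a)$, and this function is uniquely minimized at the midpoint $m:=(L+R)/2$, with minimum value $(R-L)/2$. So one expects the minimizers $\nu_p$ to converge to $m$.

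Write $\phi_p(a):=\big(E|X-a|^p\big)^{1/p}$; since $x\mapsto x^{1/p}$ is increasing, $\nu_p$ also minimizes $\phi_p$. I would argue by subsequences: by Proposition \ref{bounds} every $\nu_p$ lies in the compact interval $[L,R]$, so if $\nu_p\not\to m$ there is a subsequence $\nu_{p_k}\to\nu^*$ with $\nu^*\neq m$. I then bound $\phi_{p_k}(\nu_{p_k})$ above and below to reach a contradiction.

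For the upper bound, note $|X-m|\le(R-L)/2$ almost surely, so $E|X-m|^{p}\le\big((R-L)/2\big)^{p}$, and by minimality of $\nu_{p_k}$,
$$\phi_{p_k}(\nu_{p_k})\le\phi_{p_k}(m)\le\frac{R-L}{2}\qquad\text{for every }k.$$
For the lower bound, suppose first $\nu^*>m$, so that $\nu^*-L>(R-L)/2$. Since the support is $[L,R]$, for each small $\eta>0$ the quantity $\alpha:=P(X<L+\eta)$ is strictly positive, and on the event $\{X<L+\eta\}$ one has $|X-\nu_{p_k}|>(\nu_{p_k}-L)-\eta$. Restricting the expectation to this event and taking $p_k$-th roots yields
$$\phi_{p_k}(\nu_{p_k})\ge\big((\nu_{p_k}-L)-\eta\big)\,\alpha^{1/p_k}.$$
Letting $k\to\infty$, so that $\alpha^{1/p_k}\to1$ and $\nu_{p_k}\to\nu^*$, and then $\eta\to0$, gives $\liminf_k\phi_{p_k}(\nu_{p_k})\ge\nu^*-L>(R-L)/2$, contradicting the upper bound. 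The case $\nu^*<m$ is symmetric, using the positive mass $P(X>R-\eta)>0$ near the right endpoint instead. Since $\nu^*\neq m$ always produces a contradiction, $\nu_p\to m$.

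The main obstacle — and the reason for the subsequence-and-two-sided-bound structure — is that mere pointwise convergence of $\phi_p$ to $a\mapsto\max(a-L,R-a)$ does not by itself force convergence of the minimizers, since the exponent $p_k$ and the argument $\nu_{p_k}$ vary simultaneously. The lower bound resolves this by isolating a single dominant contribution, namely the mass near the far endpoint, whose $p_k$-th root tends to $1$; only the distance $\nu_{p_k}-L$ then survives in the limit, which is exactly what makes the two bounds incompatible.
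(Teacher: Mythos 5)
Your proof is correct, but it takes a genuinely different route from the paper's. You work with the Fr\'echet characterization $\nu_p=\argmin_a E|X-a|^p$ (legitimate here, since bounded support gives finite moments of all orders) and exploit the fact that the $L^p$-type quantity $\big(E|X-a|^p\big)^{1/p}$ is squeezed toward the sup-norm $\max(a-L,R-a)$, whose unique minimizer is the midpoint: the value at the midpoint gives the upper bound $(R-L)/2$, while the mass near the far endpoint, whose $p_k$-th root tends to $1$, gives a strictly larger lower bound along any subsequence converging to a limit other than the midpoint. The paper instead stays with the primitive defining equation $E[(X-\nu_p)_+^{p-1}]=E[(\nu_p-X)_+^{p-1}]$: after normalizing the support to $(0,1)$ via Proposition \ref{linearity}, it shows that if $\nu_{p_k}<1/2-\epsilon$ along a subsequence, then the ratio of the left-part to the right-part $(p_k-1)$-st moments is bounded by a constant times $(1-2\epsilon)^{p_k-1}\to 0$, contradicting the required equality. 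Both arguments hinge on the same two ingredients --- compactness of $[L,R]$ plus positive mass in every neighborhood of each endpoint --- so they are comparable in length and difficulty. What your version buys is a cleaner variational picture (the midpoint as the Chebyshev center minimizing the limiting sup-norm), which would extend naturally to the multivariate setting of Section \ref{extended settings section}; what the paper's version buys is independence from the argmin characterization, working directly with the one-sided moment balance that defines $\nu_p$ throughout the paper. One small point of care in your write-up: the pointwise bound $|X-\nu_{p_k}|>(\nu_{p_k}-L)-\eta$ on $\{X<L+\eta\}$ requires $\nu_{p_k}>L+\eta$, which holds for all large $k$ precisely because $\nu_{p_k}\to\nu^*>m>L$ and $\eta$ is small; it is worth stating this explicitly, but it is not a gap.
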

		
	\noindent Suppose instead $X$ has infinite support that is bounded below. We have an analogous result if the 
	support is instead bounded from above.

	\begin{proposition}\label{nu->inf, halfline}
		Let $X$ be a continuous random variable supported on $(L,\infty)$ for finite $L$. If $X$ has 
		finite moments of all orders and $P(X>x)>0$ holds for every $x>L$, then $\nu_p\to\infty$ as
		$p\to\infty$.
	\end{proposition}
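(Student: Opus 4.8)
The plan is to argue by contradiction using the defining equation \eqref{nup def 2}, exploiting the asymmetry that the right tail of $X$ is unbounded while the left part is trapped in the bounded interval $(L,\nu_p)$. Recall that $\nu_p$ is characterized by
$$E[(X-\nu_p)_+^{p-1}] = E[(\nu_p-X)_+^{p-1}].$$
Suppose, toward a contradiction, that $\nu_p\not\to\infty$ as $p\to\infty$. Then there exist a finite constant $M$ and a sequence $p_k\to\infty$ with $\nu_{p_k}\le M$ for every $k$. The strategy is to show that the left-hand side above is forced to grow strictly faster in $k$ than the right-hand side, which is impossible given that they are equal.

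First I would bound the right-hand side from above. Since $X$ is supported on $(L,\infty)$ we have $X>L$ almost surely, and Proposition \ref{bounds} guarantees $\nu_p>L$; hence on the whole probability space $(\nu_{p_k}-X)_+\le \nu_{p_k}-L\le M-L$, which gives
$$E[(\nu_{p_k}-X)_+^{p_k-1}] \le (M-L)^{p_k-1}.$$
Next I would bound the left-hand side from below by isolating the contribution of the far right tail. Choose any fixed $b>2M-L$, so that $b-M>M-L>0$, and set $\epsilon:=P(X>b)$, which is strictly positive by the hypothesis $P(X>x)>0$ for all $x>L$. On the event $\{X>b\}$ we have $X-\nu_{p_k}\ge b-M>0$, so
$$E[(X-\nu_{p_k})_+^{p_k-1}] \ge (b-M)^{p_k-1}\,\epsilon.$$

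Combining these two estimates with the equality in the defining equation yields $(b-M)^{p_k-1}\epsilon \le (M-L)^{p_k-1}$ for every $k$. Taking $(p_k-1)$-th roots gives $(b-M)\,\epsilon^{1/(p_k-1)}\le M-L$, and letting $k\to\infty$ (so that $\epsilon^{1/(p_k-1)}\to 1$) produces $b-M\le M-L$, contradicting the choice $b>2M-L$. This completes the argument. I do not expect a genuine obstacle here: the argument is elementary once the two-sided bound is set up, and the only point requiring care is the correct placement of inequalities. The key observation driving everything is that the left part of the distribution can contribute to the $(p-1)$-st moment at geometric rate at most $M-L$, whereas the unbounded right tail contributes at the strictly larger geometric rate $b-M$; it is precisely this gap in exponential growth rates that the limit exploits, mirroring how $\nu_p\to(L+R)/2$ in the bounded case of Proposition \ref{nu->midpoint, bounded}.
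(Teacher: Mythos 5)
Your proof is correct and follows essentially the same route as the paper's: argue by contradiction, bound $E[(\nu_p-X)_+^{p-1}]$ above by a geometric quantity using that the left part is trapped in the bounded interval $(L,M)$, and bound $E[(X-\nu_p)_+^{p-1}]$ below via a fixed tail event of positive probability, then note the two growth rates are incompatible with \eqref{nup def 2}; the paper merely normalizes $L=0$, $M=1/2$ via Proposition \ref{linearity} so that the left side tends to $0$ (bounded convergence) while the right side stays bounded below by $P(X>3/2)>0$, whereas you compare the two geometric rates directly by taking $(p_k-1)$-th roots. A minor point in your favor: you correctly negate $\nu_p\to\infty$ along a subsequence $p_k\to\infty$ with $\nu_{p_k}\le M$, while the paper's write-up assumes $\nu_p<M$ for all $p\ge1$.
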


	\noindent A consequence of Proposition \ref{nu->inf, halfline} is that no distribution with support on the 
	positive half-line is truly negatively skewed. Similarly, no distribution with support on the negative
	half-line is truly positively skewed.

\subsection{Examples of true skewness}\label{Ss:Levy, Weibull, Skew}

True positive skewness has already been shown in \cite{Kovchegov21} for several distributions: exponential, gamma, beta (with the mode in the left half-interval), log-normal, and Pareto. Using Lemma \ref{Kovchegov L2}, we establish true positive skewness of L\'evy and chi-squared distributions, and identify the parameter regions for which Weibull and skew-normal distributions are truly skewed. Recall that L\'evy distribution has undefined Pearson moment coefficient of skewness because it has no finite integer moments. Yet, as we already mentioned in Remark \ref{rem:infmoment}, finiteness of integer moments is not required for true skewness. Thus, to the authors' knowledge, L\'evy distribution's positive skewness is formally established for the first time in this work.

	\begin{definition}
		The {\bf L\'evy distribution} with location parameter $\mu\in\R$ and scale parameter
		$\lambda>0$ is a continuous probability distribution, denoted by $\Levy(\mu,\lambda)$, with
		the density function
		\begin{equation}\label{Levy density}
			f(x;\mu,\lambda) := \sqrt{\frac{\lambda}{2\pi}}\,\frac{e^{-\frac{\lambda}{2(x-\mu)}}}
				{(x-\mu)^{3/2}}, \qquad x>\mu.
		\end{equation}
	\end{definition}
	
	\begin{definition}
		The {\bf chi-squared distribution} with $k\in\N$ degrees of freedom is a continuous probability 
		distribution, denoted by $\ChiSquared(k)$, with the density function
		\begin{equation}\label{chi-squared density}
			f(x;k) := \frac{x^{(k/2)-1}e^{-x/2}}{2^{k/2}\Gamma(k/2)}, \qquad x>0,
		\end{equation}
	\end{definition}
	
	\begin{definition}
		The {\bf Weibull distribution} with shape paramter $k>0$ and scale parameter $\lambda>0$ is a 
		continuous probability distribution, denoted by $\Weibull(k,\lambda)$, with the density 
		function
		\begin{equation}\label{Weibull density}
			f(x;k,\lambda) := \frac k\lambda \left(\frac x\lambda\right)^{k-1} e^{-(x/\lambda)^k},
			\qquad x>0.
		\end{equation}
	\end{definition}
	
	\begin{definition}
		The {\bf skew-normal distribution} with shape parameter $\alpha\in\R$ is a continuous
		probability distribution, denoted by $\SkewNormal(\alpha)$, with the density function
		\begin{equation}\label{skew-normal density}
			f(x;\alpha) = 2\phi(x)\Phi(\alpha x), \qquad x\in\R,
		\end{equation}
		where $\phi(x)=(2\pi)^{-1/2}e^{-x^2/2}$ and $\Phi(x)=\int_{-\infty}^x\phi(t)dt$ are respectively
		the density and distribution functions of the standard normal Gaussian distribution.
	\end{definition}

	\noindent A common strategy for showing the true skewness of the preceding distributions employs the following observation.

	\begin{lemma}\label{nup clopen}
		Suppose there exists a constant $C$ such that, for every $p\in\D\setminus\{0\}$, $\nu(\cdot)$ is 
		increasing at $p$ whenever $\nu(p)>C$. If $\nu(p')>C$ for some $p'\in\D\setminus\{0\}$, then 
		$\nu(\cdot)$ is increasing on $\D\cap[p',\infty)$.
	\end{lemma}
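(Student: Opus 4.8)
The plan is to run a continuity (``clopen'') argument: once $\nu(\cdot)$ rises above the level $C$ it can never come back down to $C$, precisely because at every point where it exceeds $C$ it is strictly increasing. First I would record two structural facts. Since $|X|^{q-1}\le 1+|X|^{p-1}$ pointwise for $1\le q\le p$, we have $E|X|^{q-1}\le 1+E|X|^{p-1}$, so the domain $\D$ is an interval of the form $[1,P)$ or $[1,P]$; hence $I:=\D\cap[p',\infty)$ is itself an interval, in particular connected. Second, by Proposition \ref{nup C1} the map $p\mapsto\nu_p$ is $C^1$, and in particular continuous, on $\D\cap(1,\infty)$; throughout I read ``$\nu(\cdot)$ is increasing at $p$'' as $\nu'(p)>0$, in line with the remark following Theorem \ref{Kovchegov T1}.

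Next I would introduce
\[
	J:=\{\,p\in I:\ \nu(q)>C\ \text{for all }q\in[p',p]\,\},
\]
a subinterval of $I$ containing $p'$ (it is nonempty since $\nu(p')>C$), and set $p^*:=\sup J$. The goal is to prove $J=I$. On $[p',p^*)$ we have $\nu>C$ by construction, so the hypothesis gives $\nu'>0$ there, whence $\nu$ is strictly increasing on $[p',p^*)$; combined with $\nu(p')>C$ this yields $\nu(q)\ge\nu(p')>C$ for every $q\in[p',p^*)$. Letting $q\to p^{*-}$ and invoking continuity of $\nu$ (or, in the degenerate case $p^*=p'$, reading off $\nu(p^*)=\nu(p')$ directly) gives $\nu(p^*)\ge\nu(p')>C$, so $p^*\in J$.

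The final step is to rule out $p^*<\sup I$. If $p^*$ were not the right endpoint of $I$, then since $\nu$ is continuous with $\nu(p^*)>C$, there is a neighborhood of $p^*$ in $I$ on which $\nu>C$; together with $\nu>C$ on $[p',p^*]$ this forces $\nu>C$ on $[p',p^*+\varepsilon]$ for some $\varepsilon>0$, contradicting $p^*=\sup J$. Hence $p^*=\sup I$ and $J=I$, so $\nu>C$ on all of $I$. The hypothesis then yields $\nu'>0$ at every point of $I\cap(1,\infty)$, so $\nu$ is strictly increasing there; continuity of $\nu$ at the left endpoint upgrades this to monotonicity on all of $I$ (for $p'<q_1<q_2$ one has $\nu(p')\le\nu(q_1)<\nu(q_2)$). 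This is exactly the assertion that $\nu(\cdot)$ is increasing on $\D\cap[p',\infty)$.

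I expect the delicate point to be the transition at $p^*$: one must secure the \emph{strict} inequality $\nu(p^*)>C$ rather than merely $\nu(p^*)\ge C$, since strictness is what prevents $\nu$ from stalling at the level $C$ and lets the increasing behavior propagate past $p^*$. That strictness comes from the monotonicity of $\nu$ on $[p',p^*)$ inherited from points strictly below $p^*$, not from continuity alone. A minor secondary issue is the endpoint $p'$ (e.g.\ $p'=1$), where $\nu'$ need not be defined and one only has one-sided continuity; this is absorbed into the continuity-and-limit argument of the last step.
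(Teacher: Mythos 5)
Your proof is correct and takes essentially the same approach as the paper's: the paper argues by contradiction, letting $q$ be the infimum of the closed set $A=\{p\in\D\cap[p',\infty):\nu_p\le C\}$ and deriving $\nu_q\ge\nu_{p'}>C$ from the increase of $\nu$ on $[p',q)$ together with continuity (Proposition \ref{nup C1}), which is exactly the mechanism of your supremum-of-the-good-set argument, just mirrored. Your extra observations (that $\D$ is an interval, and the treatment of the endpoint $p'$) are sound but play no essential role beyond what the paper uses implicitly.
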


	\noindent When considering distribution families, we can always assume the location parameter is 0 and the scale parameter is 1, since they do not affect the direction of skewness (see Proposition \ref{linearity}).

	\begin{theorem}\label{Levy TPS}
		The $\Levy(\mu,\lambda)$ distribution is truly positively skewed.
	\end{theorem}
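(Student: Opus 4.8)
The plan is to reduce to the standard Lévy density, verify the single-crossing hypotheses of Lemma \ref{Kovchegov L2} for every $p$ whose $p$-mean lies above the mode, and then bootstrap with Lemma \ref{nup clopen}. By Proposition \ref{linearity} it suffices to treat $\mu=0$, $\lambda=1$, so that $f(x)=(2\pi)^{-1/2}x^{-3/2}e^{-1/(2x)}$ on $(0,\infty)$; thus $L=0$ and $R=\infty$. A short computation with $\log f(x)=c-\tfrac32\log x-\tfrac1{2x}$ gives the unique mode $\nu_0=1/3$, and since $f(x)\sim(2\pi)^{-1/2}x^{-3/2}$ as $x\to\infty$ one has $E[|X|^{s}]<\infty$ exactly when $s<1/2$, so that $\D\cap[1,\infty)=[1,3/2)$.

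The core step is to show that $\nu(\cdot)$ is increasing at every $p$ for which $\nu_p>1/3$. Fixing such a $p$ and writing $a=\nu_p$, I would study the sign of $f(a-x)-f(a+x)$ on $(0,a)$ through $\psi(x):=\log f(a-x)-\log f(a+x)=\tfrac32\log\tfrac{a+x}{a-x}-\tfrac{x}{a^2-x^2}$. Here $\psi(0)=0$, and since the $1/(a-x)$ term dominates the logarithm, $\psi(x)\to-\infty$ as $x\to a^-$. Differentiating yields $\psi'(x)=N(x)/(a^2-x^2)^2$ with $N(x)=3a^3-a^2-(3a+1)x^2$, which is strictly decreasing in $x$ on $(0,a)$ and satisfies $N(0)=a^2(3a-1)>0$ (because $a>1/3$) and $N(a)=-2a^2<0$. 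Hence $\psi'$ changes sign exactly once, from $+$ to $-$, so $\psi$ increases then decreases; combined with $\psi(0)=0$ and $\psi(a^-)=-\infty$, this forces a unique $c\in(0,a)$ with $\psi>0$ on $(0,c)$, $\psi(c)=0$, and $\psi<0$ on $(c,a)$. For $x\ge a$ one has $f(a-x)=0<f(a+x)$ automatically, so $f(\nu_p-x)<f(\nu_p+x)$ holds on all of $(c,\infty)$. These are precisely hypotheses (a)--(c) of Lemma \ref{Kovchegov L2}, and its side condition $\nu_p-L\le R-\nu_p$ is trivial since $R=\infty$; therefore $\nu(\cdot)$ is increasing at $p$.

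It then remains to initialize the bootstrap. I would verify that the median exceeds the mode by evaluating the L\'evy cumulative distribution function $F(x)=\operatorname{erfc}\!\big(\sqrt{1/(2x)}\big)$ at $x=1/3$ and checking $F(1/3)<1/2$, which gives $\nu_1>1/3$. Applying Lemma \ref{nup clopen} with $C=1/3$ and $p'=1$ then shows $\nu(\cdot)$ is strictly increasing on $\D\cap[1,\infty)=[1,3/2)$. Finally, monotonicity gives $\nu_p\ge\nu_1>1/3=\nu_0$ for every $p\in[1,3/2)$, so $\nu_0$ lies strictly below every other $p$-mean; this is exactly true (mode) positive skewness.

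I expect the main obstacle to be the single-crossing verification, i.e.\ confirming that the reflected densities $x\mapsto f(a-x)$ and $x\mapsto f(a+x)$ cross exactly once on $(0,a)$. The key simplification is that the numerator $N(x)$ of $\psi'$ is a downward parabola in $x^2$, which makes the ``increase-then-decrease'' shape of $\psi$ transparent and pins down the crossing point; one must also remember to handle the region $x\ge a$, where $f(a-x)=0$, separately so that hypothesis (c) is satisfied on the whole half-line.
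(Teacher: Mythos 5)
Your proposal is correct and takes essentially the same route as the paper: reduce to $\Levy(0,1)$ by Proposition \ref{linearity}, show the log density ratio of the left and right parts has a single positive critical point whenever $\nu_p$ exceeds a fixed threshold (a quadratic equation in $x$), verify the hypotheses of Lemma \ref{Kovchegov L2}, and bootstrap via Lemma \ref{nup clopen} using the median. Incidentally, your quadratic $(3a+1)x^2=a^2(3a-1)$ with threshold $\nu_p>1/3$ is the corrected form of the paper's displayed equation $(3\nu_p+2)x^2+\nu_p^2(3\nu_p-2)=0$ with threshold $2/3$ (which contains an arithmetic slip); since the median exceeds both constants, the conclusion is unaffected either way.
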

	
	\begin{theorem}\label{chi-squared TPS}
		The $\ChiSquared(k)$ distribution is truly positively skewed.
	\end{theorem}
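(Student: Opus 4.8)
The plan is to split on the shape parameter $k$. For $k\in\{1,2\}$ the density \eqref{chi-squared density} is strictly decreasing on $(0,\infty)$ (the logarithmic derivative $\tfrac{k/2-1}{x}-\tfrac12$ is negative throughout), so Proposition \ref{Kovchegov P2} immediately yields true positive skewness. The substance is the case $k>2$, where the mode is $\nu_0=k-2>0$ and the density first increases and then decreases; here I would apply the single-crossing criterion of Lemma \ref{Kovchegov L2} at every $p\in\D\setminus\{0\}$.

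First I would record the reflected log-ratio. Fix $p$ and, for $x\in(0,\nu_p)$, set
$$\psi(x):=\log\frac{f(\nu_p+x)}{f(\nu_p-x)}=\Big(\tfrac k2-1\Big)\log\frac{\nu_p+x}{\nu_p-x}-x,$$
so that $\psi(0)=0$ and $\psi(x)\to+\infty$ as $x\to\nu_p^-$ (using $k>2$). A direct computation gives $\psi'(x)=(k-2)\tfrac{\nu_p}{\nu_p^2-x^2}-1$, which is strictly increasing on $(0,\nu_p)$; hence $\psi$ is strictly convex, and $\psi'(0)=\tfrac{(k-2)-\nu_p}{\nu_p}$.

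The key step, which I expect to be the main obstacle, is to show that $\nu_p>k-2=\nu_0$ for every $p\in\D\setminus\{0\}$; this simultaneously delivers the mode-comparison half of true mode positive skewness and places us in the regime $\psi'(0)<0$ that the crossing argument needs. I would argue by contradiction: if $\nu_p\le k-2$, then $\psi'(0)\ge0$, so strict convexity forces $\psi>0$ on all of $(0,\nu_p)$, i.e.\ $f(\nu_p+x)>f(\nu_p-x)$ there. Multiplying by $x^{p-1}>0$, integrating, and using $f>0$ on $(\nu_p,\infty)$ gives $\int_0^\infty x^{p-1}f(\nu_p+x)\,dx>\int_0^{\nu_p}x^{p-1}f(\nu_p-x)\,dx$, contradicting the defining equality \eqref{nup def 2 integral}. (Equivalently, one could verify $\nu_p>k-2$ at the single point $p=2$, where $\nu_2=k$, and invoke Lemma \ref{nup clopen} with $C=k-2$; but the contradiction argument covers all $p\ge1$ simultaneously and avoids any separate analysis of the interval $[1,2]$.)

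With $\nu_p>k-2$ in hand we have $\psi'(0)<0$, and since $\psi'$ is increasing with $\psi'(\nu_p^-)=+\infty$ it changes sign exactly once; combined with $\psi(0)=0$ and $\psi(\nu_p^-)=+\infty$ this forces $\psi$ to have a unique zero $c\in(0,\nu_p)$ with $\psi<0$ on $(0,c)$ and $\psi>0$ on $(c,\nu_p)$. Translating back, this is precisely hypotheses (a)--(c) of Lemma \ref{Kovchegov L2}: $f(\nu_p-c)=f(\nu_p+c)$, $f(\nu_p-x)>f(\nu_p+x)$ on $(0,c)$, and $f(\nu_p-x)<f(\nu_p+x)$ for $x>c$ (for $x\ge\nu_p$ the left-hand side vanishes while the right-hand side stays positive, so the inequality persists). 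Since $R=\infty$, the support condition $\nu_p-L\le R-\nu_p$ of Lemma \ref{Kovchegov L2} holds, and we conclude that $\nu(\cdot)$ is increasing at $p$. As $p\in\D\setminus\{0\}$ was arbitrary, $\nu_p$ is strictly increasing on $[1,\infty)$ and exceeds $\nu_0$ throughout, which is exactly true (mode) positive skewness.
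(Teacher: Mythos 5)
Your proof is correct, and while the skeleton matches the paper's (split $k\le 2$ from $k>2$, then verify the single-crossing hypotheses of Lemma \ref{Kovchegov L2} via the log density ratio, whose derivative computation is identical), you handle the crucial step --- showing $\nu_p>k-2=\nu_0$ for every $p$ --- by a genuinely different route. The paper only proves the conditional statement ``$\nu(\cdot)$ is increasing at $p$ whenever $\nu_p>k-2$'' and then bootstraps: it invokes Lemma \ref{nup clopen} with $C=k-2$, $p'=1$, which requires the external fact that the chi-squared median satisfies $\nu_1>k-2$ (cited to Sen). You instead prove $\nu_p>k-2$ unconditionally for each $p\in\D\setminus\{0\}$ by contradiction: if $\nu_p\le k-2$ then $\psi'(0)\ge0$, strict convexity of $\psi$ forces $f(\nu_p+x)>f(\nu_p-x)$ on all of $(0,\nu_p)$, and integrating against $x^{p-1}$ contradicts the defining balance equation \eqref{nup def 2 integral}. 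This is the same device the paper itself deploys inside the Weibull proof (to show $a_0>0$ there), but applied here it makes the chi-squared argument self-contained: no appeal to the median inequality, no continuity-of-$\nu_p$/clopen bootstrapping, and it covers all of $[1,\infty)$ at once rather than propagating rightward from $p'=1$. What the paper's route buys in exchange is brevity, since Lemma \ref{nup clopen} is already proved and the median bound is classical. One small caution: your parenthetical alternative (verify $\nu_2=k>k-2$ and apply Lemma \ref{nup clopen} with $p'=2$) would, as you yourself note, only yield monotonicity on $[2,\infty)$ and leave $[1,2)$ unresolved, so it is not actually equivalent; your main contradiction argument is the one that closes the proof.
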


	\begin{theorem}\label{Weibull TPS}
		The $\Weibull(k,\lambda)$ distribution is truly positively skewed if and only if
		$0<k<\frac1{1-\log2}$. Moreover, it is never truly negatively skewed.
	\end{theorem}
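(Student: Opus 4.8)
The plan is to first normalize and dispose of the easy regimes, then reduce the heart of the problem to a single-crossing analysis feeding into Lemmas \ref{Kovchegov L2} and \ref{nup clopen}. By Proposition \ref{linearity} we may take $\lambda=1$, so $f(x)=kx^{k-1}e^{-x^k}$ on $(0,\infty)$. Since the support is a half-line bounded below, Proposition \ref{nu->inf, halfline} gives $\nu_p\to\infty$ as $p\to\infty$, so the distribution is never truly negatively skewed; this settles the final sentence. For $0<k\le1$ the log-derivative $(\log f)'(x)=(k-1)/x-kx^{k-1}$ is strictly negative, so $f$ is strictly decreasing and Proposition \ref{Kovchegov P2} yields true positive skewness. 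Thus all remaining content lies in the regime $k>1$, where $f$ has an interior mode $\nu_0=\big(\tfrac{k-1}{k}\big)^{1/k}$.

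The key observation is that the threshold $\frac{1}{1-\log 2}$ is exactly where the mode crosses the median. Indeed $F(x)=1-e^{-x^k}$ gives median $\nu_1=(\log 2)^{1/k}$, and $\nu_1>\nu_0$ holds iff $\log 2>\frac{k-1}{k}=1-\frac1k$, i.e. iff $k<\frac{1}{1-\log 2}$. This immediately yields the ``only if'' direction: when $k\ge\frac{1}{1-\log 2}$ we have $\nu_1\le\nu_0$, so $\nu(\cdot)$ violates $\nu_0<\nu_1$ and the distribution fails to be truly (mode) positively skewed in the sense adopted in Remark \ref{TMPS}.

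For the ``if'' direction I would combine Lemma \ref{Kovchegov L2} with Lemma \ref{nup clopen}, taking the threshold constant $C=\nu_0$. The goal is to show that whenever $\nu_p>\nu_0$, the center $a=\nu_p$ admits a single crossing $c>0$ as in hypotheses (a)--(c) of Lemma \ref{Kovchegov L2}; since $L=0$ and $R=\infty$ make $\nu_p-L\le R-\nu_p$ automatic, this forces $\nu(\cdot)$ to be increasing at $p$. Writing $h=\log f$ and $D(x)=h(a-x)-h(a+x)$ on $(0,a)$, one has $D(0)=0$, $D'(0)=-2h'(a)>0$ for $a>\nu_0$, and $D(x)\to-\infty$ as $x\to a^-$ (since $h(0^+)=-\infty$ for $k>1$), while $f(a-x)-f(a+x)=-f(a+x)<0$ for $x\ge a$. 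Hence the single crossing is equivalent to $D$ having exactly one interior zero, which follows once $D'(x)=-\big(h'(a-x)+h'(a+x)\big)$ is shown to change sign exactly once on $(0,a)$. Granting this for all $a>\nu_0$, Lemma \ref{nup clopen} applied with $p'=1$ (legitimate since $\nu_1>\nu_0$ when $k<\frac{1}{1-\log 2}$) gives that $\nu(\cdot)$ is increasing on $\D\cap[1,\infty)$ with $\nu_p>\nu_0$ throughout; combined with $\nu_0<\nu_1$ this yields strict monotonicity on all of $\D$, i.e. true positive skewness.

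The hard part is the single crossing of $D'$, equivalently that $h'(a-x)+h'(a+x)$ vanishes exactly once. Since $h''(y)=-(k-1)\big(y^{-2}+ky^{k-2}\big)$, one computes $\frac{d}{dx}\big(h'(a-x)+h'(a+x)\big)=(k-1)\Big(\tfrac{1}{(a-x)^2}-\tfrac{1}{(a+x)^2}+k\big((a-x)^{k-2}-(a+x)^{k-2}\big)\Big)$. For $1<k\le 2$ both displayed differences are nonnegative, so $h'(a-x)+h'(a+x)$ is strictly increasing and the single crossing is immediate. For $2<k<\frac{1}{1-\log 2}$ the two terms compete and this monotonicity fails; here I expect the main technical effort, requiring a sharper argument (for instance bounding the relevant range of $a=\nu_p$ and analyzing the sign of $D'$ directly, or exploiting log-concavity of $f$ more carefully) to rule out extra sign changes on the interval in question.
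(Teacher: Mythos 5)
Your overall architecture matches the paper's: normalize via Proposition \ref{linearity}, dispose of $k\le 1$ with Proposition \ref{Kovchegov P2}, get ``never truly negatively skewed'' from Proposition \ref{nu->inf, halfline}, get the ``only if'' from the mode--median comparison $\nu_0<\nu_1 \iff k<\frac{1}{1-\log 2}$, and reduce the ``if'' direction to a single-crossing property of the log density ratio $D(x)=\log f(\nu_p-x)-\log f(\nu_p+x)$ fed into Lemma \ref{Kovchegov L2}, with Lemma \ref{nup clopen} applied at threshold $C=\nu_0$ and base point $p'=1$. All of that is correct, and your treatment of $1<k\le 2$ is actually cleaner than the paper's: you observe that $h'(a-x)+h'(a+x)$ is strictly increasing because both $\frac{1}{(a-x)^2}-\frac{1}{(a+x)^2}$ and $(a-x)^{k-2}-(a+x)^{k-2}$ are nonnegative when $k\le 2$, so $D'$ is strictly decreasing and the single crossing is immediate; the paper instead runs a binomial-series argument even in this range.

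However, there is a genuine gap: for $2<k<\frac{1}{1-\log 2}\approx 3.26$ you prove nothing, and this interval is nonempty and contains the hardest parameter values (including $k=3$ and the values just below the threshold, where mode and median nearly coincide and the single-crossing claim is most delicate). Saying you ``expect the main technical effort'' there, with suggestions like bounding the range of $\nu_p$ or ``exploiting log-concavity more carefully,'' is not a proof, and the difficulty is real: the monotonicity of $D'$ genuinely fails, so some new mechanism is required to bound the number of sign changes. The paper's resolution is to rewrite the critical-point equation
\begin{equation*}
k(\nu_p^2-x^2)\bigl[(\nu_p-x)^{k-1}+(\nu_p+x)^{k-1}\bigr]-2(k-1)\nu_p=0
\end{equation*}
as a power series in $x$ via the generalized binomial expansion, compute the sign pattern of the coefficients $\binom{k-1}{2n}-\binom{k-1}{2n-2}$ in each subrange ($2<k<3$, $k=3$, $3<k<\frac{1}{1-\log 2}$), and invoke Descartes' rule of signs; crucially, the constant coefficient $a_0=2\nu_p(k\nu_p^k-k+1)$ is positive exactly when $\nu_p>\nu_0$, which is what ties the root count to the threshold $C=\nu_0$ in Lemma \ref{nup clopen} and rules out the spurious second root allowed by the two sign changes (the extra root is excluded because $D'\to-\infty$ at the right endpoint). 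Without an argument of this kind, your proof covers only $0<k\le 2$, so the theorem as stated remains unproved.
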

	
	\noindent The L\'evy, chi-squared, and Weibull distributions are supported only on the half-line, but similar techniques apply if the distribution is supported on the entire line.
	
	\begin{theorem}\label{Skew-normal TPS}
		The $\SkewNormal(\alpha)$ distribution is truly positively skewed if $\alpha>0$,
		truly negatively skewed if $\alpha<0$, and symmetric if $\alpha=0$.
	\end{theorem}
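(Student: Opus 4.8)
The plan is to dispose of the degenerate and negative cases by symmetry and then reduce the positive case to the single-crossing criterion of Lemma \ref{Kovchegov L2}. When $\alpha=0$ the density is the standard Gaussian, which is symmetric, so $\nu_p\equiv 0$. If $X\sim\SkewNormal(\alpha)$ then $-X\sim\SkewNormal(-\alpha)$, and by Proposition \ref{linearity} $\nu_p^{-X}=-\nu_p^X$; hence true positive skewness for $\alpha>0$ immediately yields true negative skewness for $\alpha<0$. It therefore suffices to prove that $\SkewNormal(\alpha)$ is truly (mode) positively skewed for $\alpha>0$. First I would record that $f$ is strictly log-concave: writing $\psi=\phi/\Phi$ for the inverse Mills ratio, $(\log f)'(x)=-x+\alpha\psi(\alpha x)$ and $(\log f)''(x)=-1+\alpha^2\psi'(\alpha x)<0$ since $\psi'<0$. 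Thus $X$ is unimodal with a unique mode $\nu_0$, and $\nu_0>0$ because $(\log f)'(0)=\alpha\psi(0)>0$.

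The engine of the argument is the comparison function
\[
	h_m(y):=\log\frac{f(m+y)}{f(m-y)}=-2my+\log\Phi(\alpha(m+y))-\log\Phi(\alpha(m-y)),\qquad y>0,
\]
indexed by a center $m\in\R$. Conditions (a)--(c) of Lemma \ref{Kovchegov L2} at the point $\nu_p$ amount exactly to $h_{\nu_p}$ exhibiting a single sign change, from negative to positive, on $(0,\infty)$. Differentiating yields the identities $h_m(0)=0$, $h_m'(0)=2f'(m)/f(m)$, $h_m''(0)=0$, and $h_m'''(y)=\alpha^3[\psi''(\alpha(m+y))+\psi''(\alpha(m-y))]$. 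The analytic crux, which I expect to be the main obstacle, is the inequality $\psi''>0$, i.e.\ strict convexity of the inverse Mills ratio. This follows from the Riccati identity $\psi'=-\psi(t+\psi)$, which gives $\psi''=\psi[(t+\psi)^2+\psi(t+\psi)-1]$; the remaining task is to prove $(t+\psi)^2+\psi(t+\psi)>1$ for all $t$, a standard but nontrivial Gaussian-tail bound that I would establish via the known estimates $\psi(t)>-t$ and $0<-\psi'(t)<1$, or else cite the literature. Granting $\psi''>0$, the function $h_m'$ is strictly convex for every $m$.

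Next I would prove mode domination: $\nu_p>\nu_0$ for every $p\in\D$ with $p\ge 1$. Taking $m=\nu_0$, the stationarity $(\log f)'(\nu_0)=0$ gives $h_{\nu_0}'(0)=0$, while $h_{\nu_0}''(0)=0$ holds automatically; since $h_{\nu_0}'$ is strictly convex, $h_{\nu_0}''$ is strictly increasing from the value $0$ at $y=0$, so $h_{\nu_0}''>0$, hence $h_{\nu_0}'>0$, hence $h_{\nu_0}>0$ on $(0,\infty)$. Equivalently, $f(\nu_0+y)>f(\nu_0-y)$ for all $y>0$. Multiplying by $y^{p-1}$ and integrating over $(0,\infty)$ gives $E[(X-\nu_0)_+^{p-1}]>E[(\nu_0-X)_+^{p-1}]$ for every $p\ge 1$; since $a\mapsto E[(a-X)_+^{p-1}]-E[(X-a)_+^{p-1}]$ is strictly increasing and vanishes at $a=\nu_p$, this forces $\nu_p>\nu_0$. (The instance $p=1$ records median $>$ mode.)

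Finally I would establish monotonicity. Fix $p\ge 1$ and set $m=\nu_p>\nu_0$, so $h_m'(0)=2f'(m)/f(m)<0$ by log-concavity, while $h_m'(y)\to+\infty$ as $y\to\infty$ because $\psi(\alpha(m-y))\to+\infty$. A strictly convex function that is negative at $0$ and tends to $+\infty$ has a unique zero $y_1>0$; thus $h_m$ strictly decreases on $(0,y_1)$ and strictly increases on $(y_1,\infty)$, and since $h_m(0)=0$ and $h_m\to+\infty$, there is a unique $c>y_1$ with $h_m<0$ on $(0,c)$, $h_m(c)=0$, and $h_m>0$ on $(c,\infty)$. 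These are precisely conditions (b), (a), (c) of Lemma \ref{Kovchegov L2}, and since the support is all of $\R$ the lemma gives that $\nu(\cdot)$ is increasing at $p$. As this holds at every $p$ with $\nu_p>\nu_0$, Lemma \ref{nup clopen} with $C=\nu_0$ and the instance $\nu_1>\nu_0$ shows $\nu$ is increasing on $\D\cap[1,\infty)=[1,\infty)$; combined with $\nu_p>\nu_0$ this is true mode positive skewness. Together with the reductions above, this completes the theorem.
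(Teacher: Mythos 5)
Your overall architecture coincides with the paper's: dispose of $\alpha\le 0$ by symmetry, reduce $\alpha>0$ to the single-crossing conditions of Lemma \ref{Kovchegov L2} for the log-density ratio centered at $\nu_p$, derive the single crossing from a sign property of the third derivative of $\log\Phi$, and establish mode domination $\nu_p>\nu_0$. Your crux, convexity of the inverse Mills ratio $\psi=\phi/\Phi$, is literally the paper's crux in different notation: $\psi''=(\log\Phi)'''>0$ is the statement the paper writes as $\psi'''>0$ for $\psi=\log\Psi$. The surrounding steps of your argument are correct, and your mode-domination step (center at $\nu_0$, show $f(\nu_0+y)>f(\nu_0-y)$ for all $y>0$, integrate against $y^{p-1}$, and use strict monotonicity of $a\mapsto E[(a-X)_+^{p-1}]-E[(X-a)_+^{p-1}]$) is actually cleaner than the paper's argument via monotonicity of $\nu_p\mapsto R_p'(0)$; your final appeal to Lemma \ref{nup clopen} is redundant, since you have already shown $\nu_p>\nu_0$ for every $p\ge1$, but that is harmless.

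The genuine gap is the crux itself. Via the Riccati identity you correctly reduce it to $(t+\psi)(t+2\psi)>1$ for all $t$, but your proposed route --- deducing this from $\psi(t)>-t$ and $0<-\psi'(t)<1$ --- cannot work. The estimate $0<-\psi'<1$ says $\psi(t+\psi)<1$, an upper bound, whereas you need a lower bound on $(t+\psi)^2+\psi(t+\psi)$; as abstract constraints, $t+\psi>0$ and $\psi(t+\psi)<1$ are perfectly consistent with the target being below $1$ (take $t+\psi$ tiny and $\psi(t+\psi)=1/2$). Worse, the inequality is razor-thin: as $t\to-\infty$ one has $t+\psi\sim-1/t$, $\psi(t+\psi)\to1^-$, and in fact $(t+\psi)^2+\psi(t+\psi)=1+2t^{-4}+O(t^{-6})$, so no bound cruder than order $t^{-3}$ accuracy on $\psi$ can close the argument. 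This is precisely where the paper spends its effort: it invokes the Sampford--Shenton bound on the Mills ratio, $M(x)<4/(\sqrt{x^2+8}+3x)$ for $x>-1$ (see \cite[Eq.~7.8.4]{NIST:DLMF}, \cite{Sampford,Shenton}), which in your notation reads $\psi(t)>(\sqrt{t^2+8}-3t)/4$ for $t<1$. Granting that citation, your inequality follows in one line,
\[
(t+\psi)(t+2\psi) \;>\; \frac{\sqrt{t^2+8}+t}{4}\cdot\frac{\sqrt{t^2+8}-t}{2} \;=\; 1 \qquad (t<1),
\]
while for $t\ge1$ it is immediate from $\psi>0$ that $(t+\psi)(t+2\psi)>t^2\ge1$. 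So your fallback of citing the literature is the right move and completes the proof, but the citation must be this specific Mills-ratio bound (or an equivalent convexity-of-the-normal-hazard-rate statement); the elementary estimates you name first must be discarded.
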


\subsection{True skewness under limits in distribution}\label{Ss:Weak Limits}

It is reasonable to conjecture that true skewness is preserved under uniform limits of distribution functions since Lemma \ref{Kovchegov L2} implies that true skewness is, essentially, a feature of a continuous random variable's density function. However, one must be somewhat careful: let $X_n\sim\Gam(n,\lambda)$ be a sequence of independent gamma random variables, for some fixed $\lambda$, such that each $X_n$ can be expressed as a sum of $n$ i.i.d. exponential random variables. We know from \cite{Kovchegov21} that the $X_n$'s are truly positively skewed, but the central limit theorem implies that their limit in distribution is Gaussian and thus symmetric.
	
Therefore, we introduce the notion of \emph{true non-negative skewness} to refer to a random variable whose $p$-means are non-decreasing, i.e., $d\nu_p/dp\ge0$, as opposed to the strict increase required by true positive skewness. Notice that truly positively skewed as well as symmetric distributions are truly non-negatively skewed.

	\begin{theorem}\label{weak limits}
		Let $F_n,F$ be the distribution functions of $X_n,X$ respectively. Suppose that $F_n\to F$ 
		uniformly and that $\sup_nE\big[|X_n|^{p+\epsilon}\big]<\infty$ holds for some $\epsilon>0$ and every $p\in\D_X$. If the
		$X_n$'s are truly non-negatively skewed, then $X$ is truly non-negatively skewed.
		
		If, moreover, there exists a constant $C>0$ such that ${d \over dp}\nu_p^{X_n}>C$ holds for every
		$n$ and every $p\in\D_X$, then $X$ is truly positively skewed.
	\end{theorem}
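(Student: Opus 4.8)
The plan is to establish the two conclusions by transferring monotonicity of the maps $p \mapsto \nu_p^{X_n}$ to the limiting map $p \mapsto \nu_p^X$, using the characterization of $p$-means via equation \eqref{nup def 2} rather than the Fr\'echet-mean formulation. The central technical fact I would prove first is pointwise convergence $\nu_p^{X_n} \to \nu_p^X$ for each fixed $p \in \D_X$. To do this, fix $p$ and consider the function $g_n(a) := E[(a-X_n)_+^{p-1}] - E[(X_n-a)_+^{p-1}]$, which is strictly increasing in $a$ with unique root $\nu_p^{X_n}$; likewise define $g(a)$ for $X$ with root $\nu_p^X$. I would show $g_n(a) \to g(a)$ for each fixed $a$. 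The uniform moment bound $\sup_n E[|X_n|^{p+\epsilon}] < \infty$ is exactly what gives uniform integrability of the family $\{(a-X_n)_+^{p-1}\}_n$ (and its counterpart), so that $F_n \to F$ uniformly (hence weakly) upgrades to convergence of these truncated $(p-1)$-st moments. Once $g_n \to g$ pointwise and each $g_n$ is strictly increasing through its root, a standard argument shows the roots converge: $\nu_p^{X_n} \to \nu_p^X$.

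With pointwise convergence of $p$-means in hand, the first conclusion follows readily. For the non-negative skewness claim, I would fix any $p_1 < p_2$ in $\D_X$ (with $p_1 > 1$, handling the continuity-at-endpoints issues via Proposition \ref{nup C1} and a limiting argument, or by density). True non-negative skewness of each $X_n$ gives $\nu_{p_1}^{X_n} \le \nu_{p_2}^{X_n}$, and passing to the limit $n \to \infty$ yields $\nu_{p_1}^X \le \nu_{p_2}^X$. Since $p_1 < p_2$ were arbitrary, $p \mapsto \nu_p^X$ is non-decreasing, which is the definition of true non-negative skewness for $X$. The weak inequality survives the limit precisely because non-strict order is closed, which is why the gamma-to-Gaussian example forces us to settle for non-negativity here rather than strict positivity.

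For the second, stronger conclusion, the uniform derivative lower bound ${d \over dp}\nu_p^{X_n} > C$ upgrades the weak inequality to a quantitative one: integrating gives $\nu_{p_2}^{X_n} - \nu_{p_1}^{X_n} > C(p_2 - p_1)$ for all $n$. Passing to the limit preserves the non-strict version of this, namely $\nu_{p_2}^X - \nu_{p_1}^X \ge C(p_2 - p_1) > 0$ whenever $p_2 > p_1$. This strict gap for every pair of distinct points makes $p \mapsto \nu_p^X$ strictly increasing, i.e.\ $X$ is truly positively skewed. Note that the mode case $p = 0$ must be addressed separately, since the defining equation \eqref{nup def 2} does not apply there; I would argue that the Lipschitz-type lower bound, valid on $\D_X \cap (1,\infty)$, together with continuity of $\nu_p$ as $p$ decreases to the relevant boundary, still forces $\nu_p^X > \nu_0^X$ for $p$ in the interior, consistent with the convention in Remark \ref{TMPS}.

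The main obstacle I anticipate is justifying the interchange of limit and expectation in $g_n(a) \to g(a)$, i.e.\ proving $E[(a - X_n)_+^{p-1}] \to E[(a-X)_+^{p-1}]$ and its mirror. Weak convergence alone handles bounded continuous test functions, but $(a - x)_+^{p-1}$ is unbounded; the fix is uniform integrability, for which the $(p+\epsilon)$-moment bound is tailored. Concretely, I would write $(a-X_n)_+^{p-1} \le (|a| + |X_n|)^{p-1}$ and use the bound on $E[|X_n|^{p+\epsilon}]$ with a de la Vall\'ee-Poussin or truncation argument to show the tails contribute uniformly negligibly, after which $F_n \to F$ weakly closes the gap on the truncated part. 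A secondary subtlety is that $\D_{X_n}$ and $\D_X$ need not coincide, so I would restrict attention throughout to $p \in \D_X$ and note that the moment hypothesis guarantees each such $p$ lies in $\D_{X_n}$ for all $n$, so $\nu_p^{X_n}$ is well-defined where needed.
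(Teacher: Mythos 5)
Your proposal is correct, but it takes a genuinely different route from the paper in the key technical step. The paper works with the Fr\'echet formulation \eqref{nup def 1}: it sets $f_n(a)=E|X_n-a|^p$, shows $f_n(a_n)\to f(a)$ whenever $a_n\to a$ (using the same weak-convergence-plus-uniform-moment machinery you invoke, citing Durrett), and then argues that minimizers of $f_n$ converge to the minimizer of $f$. The price of that argmin route is that one must first show the $\nu_p^{X_n}$ remain in a compact interval, and the most delicate portion of the paper's proof is exactly this: a contradiction argument using strict convexity of $f_{n_k}$ against a hypothetical subsequence $\nu_p^{X_{n_k}}\to\infty$. Your route through the defining equation \eqref{nup def 2} sidesteps that issue entirely: once $g_n\to g$ pointwise, strict monotonicity of $g$ through its unique root $\nu$ gives $g(\nu-\delta)<0<g(\nu+\delta)$, hence $g_n(\nu-\delta)<0<g_n(\nu+\delta)$ for large $n$, which traps the root $\nu_p^{X_n}$ in $(\nu-\delta,\nu+\delta)$; the compactness you need comes for free. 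This is a real simplification, and it also requires only $(p-1)$-st moments where the paper's functional needs $p$-th moments (the hypothesis supplies both). The monotonicity transfer and the second, quantitative part are essentially identical in the two proofs: integrate the derivative bound to get $\nu_{p_2}^{X_n}-\nu_{p_1}^{X_n}>C(p_2-p_1)$ and pass to the non-strict limit $\nu_{p_2}^{X}-\nu_{p_1}^{X}\ge C(p_2-p_1)>0$. Two minor points to tighten in your write-up: at $p=1$ the test function $(a-x)_+^{p-1}$ degenerates to an indicator, so pointwise convergence of $g_n(a)$ needs either continuity of $F$ at $a$ or a separate argument, and uniqueness of the root relies on the uniquely defined median built into the definition of true skewness; and your closing remark about the mode $p=0$ is not actually settled by continuity of $\nu_p$ alone, since nothing in the argument controls convergence of modes --- but the paper's own proof does not address the mode either, so this is not a gap relative to it.
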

	
\noindent The condition that the distribution functions $F_n$ converge uniformly is satisfied often in practice, since uniform convergence follows from pointwise convergence if $F$ is continuous (see, e.g., \cite[Exercise 3.2.9] {Durrett2019}), which holds if $X$ has no point masses. Alternatively, uniform convergence of the distribution functions holds if the characteristic functions converge uniformly.
	
Theorem \ref{weak limits} can be used when considering the parameter regions of true skewness for certain distribution families. As an example, consider a sequence $X_n\sim\Weibull(k_n,1)$ of independent Weibull random variables, where $k_n\uparrow\frac{1}{1-\log 2}$; these are truly positively skewed by Theorem \ref{Weibull TPS}. It is clear that the distribution functions of the $X_n$'s converge uniformly to the distribution function of $X\sim\Weibull(\frac1{1-\log 2},1)$, and one can show directly that the $p$-th moments of the $X_n$'s are uniformly bounded for every $p\ge1$ (see, e.g., \cite[Eq.~2.63d]{Rinne08}). Theorem \ref{weak limits} then implies that $X$ is truly non-negatively skewed.

\subsection{Additional criteria for true skewness}\label{Ss:Additional Criteria}

	In this section, we establish two criteria for true positive skewness, one based upon a stochastic representation and the other based upon numerically verifiable conditions.

	\begin{theorem}\label{u(X)}
		Let $X$ be a continuous random variable with density function decreasing on its support. 
		If $u:\R\to\R$ is convex and strictly increasing on the support of $X$, then $u(X)$ is truly
		positively skewed.
	\end{theorem}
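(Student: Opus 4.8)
\section*{Proof proposal}

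The plan is to show directly that $Y := u(X)$ has a decreasing density and then to invoke Proposition \ref{Kovchegov P2}. Since $u$ is continuous and strictly increasing on the support $(L,R)$ of $X$, it admits a well-defined inverse $v := u^{-1}$, which is continuous and strictly increasing, and which is concave because it is the inverse of a convex increasing function. (Indeed, $v'(y) = 1/u'(v(y))$ is decreasing, as $v$ is increasing and $u'$ is increasing.) The distribution function of $Y$ is then $G(y) = P(u(X)\le y) = F(v(y))$, where $F$ denotes the distribution function of $X$; in particular $Y$ is supported on the interval $\big(u(L),u(R)\big)$, so the monotone-density propositions apply.

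Next I would compute the density of $Y$. Because $v$ is concave it is locally Lipschitz, hence absolutely continuous, on the interior of its domain; its derivative $v'$ exists almost everywhere, is non-increasing, and is strictly positive (positivity follows since $v$ is strictly increasing, so $v'$ cannot vanish on any interval). The change-of-variables formula then yields the density
\begin{equation*}
	g(y) = f\big(v(y)\big)\, v'(y)
\end{equation*}
for almost every $y$ in the support of $Y$.

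The crux is the observation that $g$ is a product of two non-negative decreasing factors. Since $f$ is decreasing and $v$ is increasing, the map $y \mapsto f(v(y))$ is decreasing; and $v'$ is decreasing by concavity of $v$. As both factors are positive on the interior of the support, their product $g$ is decreasing there, and in fact \emph{strictly} decreasing, because $f$ is strictly decreasing while $v$ is strictly increasing (the product of a strictly decreasing positive function with a non-increasing positive function is strictly decreasing). Applying Proposition \ref{Kovchegov P2} to $Y$ then gives that $Y = u(X)$ is truly positively skewed. If one instead assumes only that $f$ is non-increasing with at least one strict decrease, the same computation combined with Proposition \ref{relaxed Kovchegov P2} yields the conclusion.

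The main technical obstacle is justifying the density formula when $u$ is merely convex rather than $C^1$: one must appeal to the almost-everywhere differentiability and local absolute continuity of convex functions to validate the change of variables, and confirm that the resulting $g$ is a genuine density supported on an interval. Once this regularity point is dispatched, the monotonicity argument is immediate, and no detailed analysis of the $p$-mean equation \eqref{nup def 2} is required.
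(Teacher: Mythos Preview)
Your proposal is correct and follows essentially the same approach as the paper: both compute the density of $Y$ as $g(y)=f\big(v(y)\big)\,v'(y)$ with $v=u^{-1}$, and both exploit that each factor is decreasing (the first because $f$ is decreasing and $v$ is increasing, the second because $v$ is concave). The only difference is packaging---the paper compares $g(\nu_p-y)$ with $g(\nu_p+y)$ directly and invokes Lemma~\ref{Kovchegov L2}, whereas you observe that $g$ is globally decreasing and invoke Proposition~\ref{Kovchegov P2}, a slightly more streamlined route to the same conclusion.
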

	
	\noindent One immediate application of this theorem is when $X\sim\Exp(\lambda)$ is exponentially distributed. It is well-known that $ke^X\sim\Pareto(k,\lambda)$ and that $X^2\sim\Weibull(\frac12,\frac1{\lambda^2})$, so Theorem \ref{u(X)} immediately yields the true positive skewness of the $\Pareto(k,\lambda)$ and $\Weibull(\frac12,\frac1{\lambda^2})$ distributions.

Our second criteria for true positive skewness does not rely on the $p$-means of a distribution other than its mode and median, provided that it has a density function supported on the half-line which is twice continuously differentiable. It also does not require knowledge of the density $f$ expressed in terms of elementary functions, which has conveniently been provided in each of the specific distributions previously examined; instead, it requires certain bounds on the logarithmic derivative within certain intervals. This theorem may have applications in numerically checking true positive skewness for one-sided stable distributions, for which very little descriptive information is known in general, given specific parameter values.

	\begin{theorem}\label{criteria}
		Let $X$ have support on $(0,\infty)$ with continuous unimodal density $f\in C^2(0,\infty)$.
		Suppose $f$ has exactly two positive inflection points $\theta_1,\theta_2$ such that 
		$\theta_1<\nu_0<\theta_2$, and
		\begin{enumerate}
			\item $f'/f>1/\nu_0$ on $(0,\theta_1)$,
			\item $f'/f>-1/\nu_0$ on $(\theta_2,\infty)$.
		\end{enumerate}
		If $\nu_1>(\nu_0+\theta_2)/2$, then $X$ is truly positively skewed.
	\end{theorem}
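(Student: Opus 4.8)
\emph{Proof proposal.} The plan is to verify the single–crossing hypothesis of Lemma~\ref{Kovchegov L2} for every $p$ whose $p$-mean is large enough, and then to propagate this upward with Lemma~\ref{nup clopen}, taking the threshold $C:=(\nu_0+\theta_2)/2$. Fix $p\in\D\setminus\{0\}$, write $a=\nu_p$, and set $D(x):=f(a-x)-f(a+x)$ for $x\in(0,a)$; for $x\ge a$ one has $f(a-x)=0<f(a+x)$, so the right part already dominates there and only the behavior of $D$ on $(0,a)$ matters. Since $f$ has exactly the two inflection points $\theta_1<\theta_2$ and is unimodal with mode $\nu_0\in(\theta_1,\theta_2)$, the sign of $f''$ is constant on each of the three intervals, forcing $f''>0$ on $(0,\theta_1)$, $f''<0$ on $(\theta_1,\theta_2)$, and $f''>0$ on $(\theta_2,\infty)$. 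I will show that if $a>C$ then $D$ changes sign exactly once on $(0,\infty)$, from $+$ to $-$; since $L=0$ makes the side condition $\nu_p-L\le R-\nu_p$ of Lemma~\ref{Kovchegov L2} vacuous, this yields that $\nu(\cdot)$ is increasing at every such $p$.

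I would partition $(0,a)$ according to the location of the reflected point $u=a-x$. On the range $u>\nu_0$, i.e. $x\in(0,a-\nu_0)$, both $a-x$ and $a+x$ lie to the right of the mode where $f$ is strictly decreasing, so $f(a-x)>f(a+x)$ and $D>0$; in particular $D(a-\nu_0)=f(\nu_0)-f(2a-\nu_0)>0$. On the range $u<\theta_1$, i.e. $x\in(a-\theta_1,a)$, at any zero of $D$ one has $f(a-x)=f(a+x)=:w$ with $a-x\in(0,\theta_1)$ and, since $a>C$, $a+x>2a-\theta_1>\theta_2$; hypotheses (1) and (2) then give $f'(a-x)>w/\nu_0$ and $f'(a+x)>-w/\nu_0$, whence $D'(x)=-(f'(a-x)+f'(a+x))<0$. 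Thus every zero of $D$ on this range is a strict down-crossing, so $D$ has no up-crossing there.

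The remaining and decisive range is $u\in(\theta_1,\nu_0)$, i.e. $x\in(a-\nu_0,a-\theta_1)$, where neither (1) nor (2) applies. The crux is that the threshold was chosen exactly so that the companion point already lies in the outer convex region: for such $x$ one has $a+x>2a-\nu_0>\theta_2$ precisely because $a>(\nu_0+\theta_2)/2$. Consequently $f''(a-x)<0$ (the point $a-x$ is in the concave middle interval) while $f''(a+x)>0$, so
\[
D''(x)=f''(a-x)-f''(a+x)<0 ,
\]
i.e. $D$ is strictly concave on this range. A strictly concave function that is positive at its left endpoint (here $D(a-\nu_0)>0$) has at most one zero on the interval, and any such zero is necessarily a down-crossing. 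Hence $D$ again has no up-crossing on the middle range.

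Combining the three ranges, $D$ is positive on $(0,a-\nu_0]$ and has no up-crossing anywhere on $(0,a)$, so it changes sign at most once there; since $D<0$ for $x\ge a$, the change does occur, producing the crossing point $c>0$ required by Lemma~\ref{Kovchegov L2}. (When $f(0^{+})>0$ the crossing may fall at $c=a$, a case covered by the relaxed form of the lemma.) This gives that $\nu(\cdot)$ is increasing at every $p$ with $\nu_p>C$. To conclude, the median $\nu_1$ satisfies $\nu_1>(\nu_0+\theta_2)/2=C$ by hypothesis, so Lemma~\ref{nup clopen} (with $p'=1$) shows $\nu(\cdot)$ is increasing on $\D\cap[1,\infty)$; and since $C>\nu_0$, we also have $\nu_p\ge\nu_1>\nu_0$ for all $p\ge1$, so $\nu_p$ is strictly increasing on all of $\D$ and $X$ is truly positively skewed. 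I expect the main obstacle to be exactly the middle range $u\in(\theta_1,\nu_0)$: conditions (1)–(2) are silent there, and the argument works only because $\nu_p>(\nu_0+\theta_2)/2$ pushes the reflected point past $\theta_2$, rendering $D$ concave and thereby excluding a second crossing.
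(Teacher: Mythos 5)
Your proof is correct and, at its core, takes a genuinely different route from the paper's. Both share the same skeleton: the sign pattern of $f''$ (the paper's Lemma \ref{criteria l1}), verification of the single-crossing hypothesis of Lemma \ref{Kovchegov L2} at every $p$ with $\nu_p>(\nu_0+\theta_2)/2$, and the bootstrap via Lemma \ref{nup clopen} starting from $p'=1$. The difference lies in how single crossing is proved. The paper introduces the first crossing point $c_p$ of \eqref{cp}, shows $h_p'(c_p)>0$ (Lemma \ref{criteria l2}), and excludes later up-crossings by chord/tangent-line comparisons exploiting convexity of $f$ on $(0,\theta_1)$ and $(\theta_2,\infty)$ (Lemma \ref{criteria l5}), with Lemmas \ref{criteria l3}--\ref{criteria l4} supplying regularity and monotonicity of $p\mapsto c_p$. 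You instead analyze $D(x)=f(\nu_p-x)-f(\nu_p+x)$ over three ranges and rule out up-crossings everywhere: monotonicity of $f$ right of the mode gives $D>0$ on the first range; the threshold pushes the companion point $\nu_p+x$ past $\theta_2$ whenever $\nu_p-x<\nu_0$, so $D''=f''(\nu_p-x)-f''(\nu_p+x)<0$, and strict concavity together with $D(\nu_p-\nu_0)>0$ permits at most one zero, necessarily a down-crossing, on the middle range; and conditions (1)--(2) force $D'<0$ at any zero on the outer range. This is shorter and avoids the implicit function theorem entirely. What the paper's heavier machinery buys is stated in its own footnote: Lemmas \ref{criteria l2}--\ref{criteria l4} exist precisely to support the relaxed conditions of Remark \ref{criteria relaxations}, in which $\nu_0$ is replaced by $\nu_1-c_1$ and one must track $c_p$ as a function of $p$; your argument proves the theorem as stated but not those refinements. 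Two residual edge cases in your write-up sit at the same level of informality as the paper's own treatment: a zero of $D$ exactly at the junction $x=\nu_p-\theta_1$ is still a strict down-crossing, since continuity gives $f'(\theta_1)/f(\theta_1)\ge 1/\nu_0$ and the strict inequality from condition (2) keeps $D'<0$ there; and the degenerate situation $D>0$ on all of $(0,\nu_p)$, where the crossing sits at the support boundary, is covered (as you note) by the underlying stochastic-dominance criterion of Theorem \ref{Kovchegov T1} rather than by the literal hypotheses of Lemma \ref{Kovchegov L2}.
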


	\begin{corollary}\label{criteria 2}
		Let $X$ have support on $(0,\infty)$ with continuous unimodal density $f\in C^2(0,\infty)$.
		Suppose $f$ has exactly one positive inflection point $\theta>\nu_0$. If $\nu_1>(\nu_0+
		\theta)/2$, then $X$ is truly positively skewed.
	\end{corollary}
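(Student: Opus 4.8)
The plan is to reduce the corollary to the single--crossing machinery behind Theorem \ref{criteria}, exploiting the observation that a single inflection point forces a clean concave--convex shape that makes conditions (1)--(2) of that theorem unnecessary. Set $C:=(\nu_0+\theta)/2$. I will show that $\nu(\cdot)$ is increasing at every $p\in\D\setminus\{0\}$ for which $\nu_p>C$. Granting this, the hypothesis $\nu_1>(\nu_0+\theta)/2=C$ lets me take $p'=1$ in Lemma \ref{nup clopen}, which yields that $\nu(\cdot)$ is increasing on $\D\cap[1,\infty)$. Since then $\nu_0<C<\nu_1\le\nu_p$ for all $p\ge 1$ and $\nu_p$ is strictly increasing there, this gives true (mode) positive skewness in the sense of Remark \ref{TMPS}.

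The heart of the argument is the single--crossing property. Fix $p$ with $\nu_p>C$ and set $g(x):=f(\nu_p+x)-f(\nu_p-x)$ for $x>0$, with the convention $f\equiv 0$ off $(0,\infty)$; the goal is to show $g$ changes sign exactly once, from negative to positive. On $(0,\nu_p-\nu_0)$ both $\nu_p-x$ and $\nu_p+x$ lie to the right of the mode, where $f$ is strictly decreasing, so $g<0$; and for $x>\nu_p$ one has $g(x)=f(\nu_p+x)>0$. The decisive interval is $(\nu_p-\nu_0,\nu_p)$, on which $\nu_p-x\in(0,\nu_0)$ and $\nu_p+x\in(2\nu_p-\nu_0,2\nu_p)$. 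Here the assumption $\nu_p>C$ is exactly what forces $2\nu_p-\nu_0>\theta$, so $\nu_p+x$ lies in the convex region $(\theta,\infty)$ while $\nu_p-x$ lies in the concave region $(0,\theta)$. Consequently $g''(x)=f''(\nu_p+x)-f''(\nu_p-x)>0$ throughout $(\nu_p-\nu_0,\nu_p)$, so $g$ is strictly convex there; since moreover $g(\nu_p-\nu_0)=f(2\nu_p-\nu_0)-f(\nu_0)<0$, a strictly convex function with negative left endpoint can vanish at most once on the interval. This convexity identity is precisely what the single inflection point buys us, and it is what replaces conditions (1)--(2) of Theorem \ref{criteria}.

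Assembling the three regimes shows that $g$ has a single sign change, and I would finish by organizing the endpoint behavior by the sign of $f(0^+)$. When $f(0^+)=0$ the crossing is interior and Lemma \ref{Kovchegov L2} applies verbatim; when $f(0^+)>0$, convexity below the chord shows that either there is an interior crossing or $g\le 0$ on all of $(\nu_p-\nu_0,\nu_p)$ with the sign change occurring at the boundary point $x=\nu_p$. In every case the reflected left density and the right density cross exactly once, so after normalization the right part strictly stochastically dominates the left part, and Theorem \ref{Kovchegov T1} gives that $\nu(\cdot)$ is increasing at $p$. I expect the main obstacle to be precisely this last bookkeeping in the case $f(0^+)>0$: the unique crossing may occur at the boundary, where $f(\nu_p-c)\ne f(\nu_p+c)$, so Lemma \ref{Kovchegov L2} need not apply as stated and one must instead invoke the single-crossing $\Rightarrow$ stochastic-dominance form of Theorem \ref{Kovchegov T1}. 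The key and most delicate step, however, is the sign computation $g''>0$, which is what the hypotheses of exactly one inflection point and $\nu_1>(\nu_0+\theta)/2$ are tailored to produce.
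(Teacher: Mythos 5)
Your proof is correct, but it takes a genuinely different route from the paper's. The paper derives the corollary by re-running the proof of Theorem \ref{criteria} with $\theta_1=0$: it keeps the $c_p$ machinery of Lemmas \ref{criteria l2}--\ref{criteria l5} and observes that conditions (1)--(2) are needed only to verify the slope condition \eqref{slope condition}, which with a single inflection point holds automatically, since $f$ is concave and increasing near $\nu_p-c_p$, so that $f(\nu_p-c_p)/(\nu_p-c_p)>f'(\nu_p-c_p)$ and hence $(\nu_p-c_p)f'(\nu_p+c_p)/f(\nu_p+c_p)>f'(\nu_p+c_p)/f'(\nu_p-c_p)>-1$ by Lemma \ref{criteria l2}. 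You instead bypass $c_p$ entirely: convexity of the reflected difference $g(x)=f(\nu_p+x)-f(\nu_p-x)$ on $(\nu_p-\nu_0,\nu_p)$ --- which is exactly where the hypothesis $\nu_p>(\nu_0+\theta)/2$ enters, via $2\nu_p-\nu_0>\theta$ --- yields the single crossing directly, and Lemma \ref{nup clopen} plays the role of the paper's $\D^*$ bootstrap. Your route is shorter and self-contained (no differentiability of $p\mapsto c_p$, no chord constructions as in Lemma \ref{criteria l5}), and your handling of the case $f(0^+)>0$ --- falling back on the stochastic-dominance form of Theorem \ref{Kovchegov T1} when the crossing occurs at the boundary $x=\nu_p$, where condition (a) of Lemma \ref{Kovchegov L2} fails --- is exactly the right fix for the one delicate point; the paper's route, by contrast, recycles machinery already in place and is what supports the relaxations of Remark \ref{criteria relaxations}. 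Two small repairs to yours: first, you only get $g''\ge 0$, not $g''>0$ (a single inflection point does not preclude isolated zeros of $f''$), but plain convexity of $g$ together with $g(\nu_p-\nu_0)<0$ already gives at most one up-crossing and no return to zero afterwards, which is all you use; second, the orientation ``concave on $(0,\theta)$, convex on $(\theta,\infty)$'' and the strict decrease of $f$ on $(\nu_0,\infty)$ are asserted rather than proved --- both follow from positivity and integrability of $f$ exactly as in the paper's Lemma \ref{criteria l1}, so citing that argument closes the gap.
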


	\begin{remark}\label{criteria relaxations}
		The conditions of Theorem \ref{criteria} can 
		be relaxed, at the potential cost of practical ease. In particular, the following require 
		one to compute $c_1$, as defined in \eqref{cp}.
	
		\begin{enumerate}[label=(\alph*)]
			\item The condition $\nu_1>(\nu_0+\theta_2)/2$ serves to guarantee that $f$ is convex 
			at $\nu_p+c_p$ for all $p$, but the required convexity can certainly be achieved for 
			a weaker lower bound on $\nu_1$. Indeed, one can see in the proof in Section \ref{S:Proofs}
			that $\nu_1+c_1>\theta_2$ is sufficient; we obtain \eqref{nup+cp>theta} via Lemma 
			\ref{criteria l4}. Note that this is not \emph{always} a weaker lower bound.
			
			\item The quantity $\nu_0$ in conditions (1) and (2) can be replaced by 
			$\nu_1-c_1$.\footnote{\ This makes the proof significantly lengthier; in fact, Lemmas 
			\ref{criteria l2}, \ref{criteria l3}, and \ref{criteria l4} are otherwise unnecessary.
			For the proof of Theorem \ref{criteria} in Section \ref{S:Proofs}, we present the most 
			general argument.} As we show later, $\nu_1-c_1<\nu_0$, so actually this replacement
			creates a stronger condition on the lower bound of $f'/f$ to the left of the mode and a 
			weaker condition on the lower bound of $f'/f$ to the right of the mode. This replacement 
			is useful when the density has a steeper right tail.

			\item Similarly, condition (2) only needs to hold on $(\nu_1+c_1,\infty)$.
		\end{enumerate}
	\end{remark}
	
	\begin{example}[Log-logistic distribution]
		The log-logistic distribution with shape parameter $\beta>0$ has density function
		$$f(x;\beta) := \frac{\beta x^{\beta-1}}{(1+x^\beta)^2},\qquad x>0.$$
		If $0<\beta\le1$, then $f$ strictly decreasing and true positive skewness follows from 
		Proposition \ref{Kovchegov P2}. Suppose $\beta>1$. One can verify that $f$ is unimodal 
		with mode
		$$\nu_0=\left(\frac{\beta-1}{\beta+1}\right)^{1/\beta},$$
		median $\nu_1=1$, and inflection points
		$$\theta^\pm = \left(\frac{2\beta^2-2\pm\beta\sqrt{3\beta^2-3}}
			{\beta^2+3\beta+2}\right)^{1/\beta}.$$
		Straightforward computations show that $\theta^-\le0$ holds if and only if $\beta\le2$, and 
		$\theta^+>\nu_0$ holds if and only if $\beta>1$. Moreover, $\theta^+<1$ holds if and only if 
		$1\le\beta<2$. Therefore, the log-logistic distribution is truly positively skewed if
		$0<\beta<2$, by Corollary \ref{criteria 2}.
	\end{example}


\section{Proofs}\label{S:Proofs}

\subsection{Proofs of results for Section \ref{S:Introduction}}

	\begin{proof}[Proof of Proposition \ref{relaxed Kovchegov P2}]
		Clearly $L$ is finite, otherwise $f$ could not be a non-increasing density function. Notice 
		that $\nu_p<(L+R)/2$ for all $p\in\D$. Otherwise \eqref{nup def 2 integral} fails to hold 
		since $f(\nu_p+y)\le f(\nu_p-y)$ and $R-\nu_p<\nu_p-L$ by assumption.
		
		Now the existence of $y_1<y_2$ such that $f(y_1)>f(y_2)$ implies that there exists a non-singleton 
		interval in $(0,\nu_p-L)$ on which $f(\nu_p+y)<f(\nu_p-y)$. Then strict stochastic dominance of 
		$\frac1{H_p}y^{p-1}f(\nu_p+y)\1_{(0,R-\nu_p)}(y)$ over $\frac1{H_p}y^{p-1}f(\nu_p-y)\1_{(0,\nu_p-L)}
		(y)$ follows by integrating each density, applying monotonicity of the integral, and using equation 
		\eqref{nup def 2 integral}.
	\end{proof}

\subsection{Proofs of results for Section \ref{Ss:Properties}}

	\begin{proof}[Proof of Proposition \ref{bounds}]
		If $\nu_p\le L$, then $E[(\nu_p-X)_+^{p-1}]=0$ while $E[(X-\nu_p)_+^{p-1}]>0$, contradicting
		\eqref{nup def 2}. Analogous if $\nu_p\ge R$.
	\end{proof}
	
	\begin{proof}[Proof of Proposition \ref{nup C1}]
		Consider the function $\Phi:\R\times\D\cap(1,\infty)\to\R$ given by 
		\begin{equation*}
			\Phi(a,p) := E[(X-a)_+^{p-1}]-E[(a-X)_+^{p-1}]
		\end{equation*}
		Both integrands $(X-a)_+^{p-1}$ and $(a-X)_+^{p-1}$ are continuously differentiable functions 
		of $a$ and $p$ within their support and are dominated by an integrable function since
		 $E|X|^{p-1}<\infty$. By the Leibniz integral rule, observe that
		\begin{equation*}
			\frac{\partial\Phi}{\partial a}(a,p) = -(p-1)\Big(E[(X-a)_+^{p-2}]+E[(a-X)_+^{p-2}]\Big),
		\end{equation*}
		which is strictly negative and finite for all $a\in\R$ and $p\in\D\cap(1,\infty)$. The map 
		$p\mapsto (\nu_p,p)$ is the zero level curve of $\Phi$ and so $p\mapsto\nu_p$ is continuously 
		differentiable by the implicit function theorem.
	\end{proof}

	\begin{proof}[Proof of Proposition \ref{linearity}]
		Let $\D:=\D_{cX+s}$. It is easy to see that $\D_X=\D$. For every $p\in\D$,
		\begin{align*}
			E\left[\Big((cX+s)-(c\nu_p^X+s)\Big)_+^{p-1}\right] &= c^{p-1}E[(X-\nu_p^X)_+^{p-1}] \\
			&= c^{p-1}E[(\nu_p^X-X)_+^{p-1}] = E\left[\Big((c\nu_p^X+s)-(cX+s)\Big)_+^{p-1}\right]
		\end{align*}
		holds, and the result follows.
	\end{proof}
	
	\begin{proof}[Proof of Proposition \ref{nu->midpoint, bounded}]
		By Proposition \ref{linearity}, it suffices to show that if $X$ has support on $(0,1)$, 
		then $\nu_p\to1/2$.
		
		Let $\epsilon\in(0,1/2)$ be given. Suppose for the sake of contradiction that there exists a 
		subsequence $\nu_{p_k}$ such that $\nu_{p_k}<1/2-\epsilon$ for all 
		$k\in\N$. For $p_k>1$, we have
		\begin{align}\label{midpoint rbound}
			E[(X-\nu_{p_k})^{p_k-1}\1\{X>\nu_{p_k}\}]
			&> E[(X-1/2+\epsilon)^{p_k-1}\1\{X>1-\epsilon\}] \nonumber \\
			&> E[2^{-(p_k-1)}\1\{X>1-\epsilon\}] \nonumber \\
			&= 2^{-(p_k-1)}P(X>1-\epsilon)
		\end{align}
		and
		\begin{align}\label{midpoint lbound}
			E[(\nu_{p_k}-X)^{p_k-1}\1\{X<\nu_{p_k}\}]
			&< E[(1/2-\epsilon)^{p_k-1}\1\{X<1/2-\epsilon\}] \nonumber \\
			&= (1/2-\epsilon)^{p_k-1}P(X<1/2-\epsilon).
		\end{align}
		From \eqref{midpoint rbound} and \eqref{midpoint lbound}, we have
		$$\frac{E[(\nu_{p_k}-X)^{p_k-1}\1\{X<\nu_{p_k}\}]}{E[(X-\nu_{p_k})^{p_k-1}\1\{X>\nu_{p_k}\}]}
			< \frac{P(X<1/2-\epsilon)}{P(X>1-\epsilon)} \cdot (1-2\epsilon)^{p_k-1} \to 0$$
		as $k\to\infty$. This contradicts \eqref{nup def 2}. Thus, every subsequence $\nu_{p_k}$ has 
		only finitely many points that lie below $1/2-\epsilon$. We can similarly show that every 
		subsequence has only finitely many points that lie above $1/2+\epsilon$. It follows by taking 
		$\epsilon\downarrow0$ that every subsequence converges to $1/2$, which completes the proof.
	\end{proof}
	
	\begin{proof}[Proof of Proposition \ref{nu->inf, halfline}]
		We may assume $L=0$. Suppose for the sake of contradiction that there exists $M>0$ such that 
		$\nu_p<M$ for all $p\ge1$. By Proposition \ref{linearity}, we may assume $M=1/2$. Note that 
		$\nu_p-X<1/2$ on $\{X<\nu_p\}$, so by the bounded convergence theorem $E[(\nu_p-X)_+^{p-1}]
		\to0$ as $p\to\infty$. On the other hand, 
		$$E[(X-\nu_p)_+^{p-1}] \ge E[(X-\nu_p)^{p-1}\1\{X>\nu_p+1\}] \ge P(X>3/2) > 0,$$
		hence \eqref{nup def 2} fails to hold for large $p$, contradiction.
	\end{proof}

\subsection{Proofs of results for Section \ref{Ss:Levy, Weibull, Skew}}

	\begin{proof}[Proof of Lemma \ref{nup clopen}]
		Suppose for the sake of contradiction that the set 
		$$A:=\{p\in\D\cap[p',\infty):\nu_p\le C\}$$
		is non-empty. Since $p\mapsto\nu_p$ is continuous by Proposition \ref{nup C1}, then $A$ is closed
		and thus contains its infimum $q>p'$. By construction, $\nu_p>C$ holds for every $p\in[p',q)$, 
		hence $\nu(\cdot)$ is increasing on $[p',q)$ and so $\nu_q\ge\nu_{p'}>C$, contradicting the fact 
		that $q\in A$.
	\end{proof}

	\begin{proof}[Proof of Theorem \ref{Levy TPS}]
		By Proposition \ref{linearity}, it suffices to consider the $\Levy(0,1)$ distribution. Let
		$f$ be its density function, i.e., $f(x)\equiv f(x;0,1)$ as in \eqref{Levy density}.
		
		Fix $p\in\D\setminus\{0\}$. To show that $\nu(\cdot)$ is increasing at $p$, it suffices to 
		show that the log density ratio of the left and right parts
		$$R_p(x) := \log\left(\frac{f(\nu_p-x)}{f(\nu_p+x)}\right) 
			= \frac32(\log(\nu_p + x)-\log(\nu_p - x)) - \frac1{2(\nu_p - x)} + \frac1{2(\nu_p + x)},$$
		for $x\in[0,\nu_p)$, has exactly one positive critical point.
		Indeed, observe that $R_p(x)\to-\infty$ as $x\to\nu_p-$ and that $R_p(0)=0$. Moreover, 
		\eqref{nup def 2 integral} implies that $R_p(\cdot)$ cannot be non-positive for all $x>0$. 
		Therefore, if $R_p(\cdot)$ has a single positive critical point, then it must be a maximum, 
		and it follows that the conditions of Lemma \ref{Kovchegov L2} are satisfied and so 
		$\nu(\cdot)$ is increasing at $p$.
		
		To identify the positive critical points, observe that
		$$R'_p(x) = \frac3{2(\nu_p - x)} + \frac3{2(\nu_p + x)} - \frac1{2(\nu_p - x)^2} 
			- \frac1{2(\nu_p + x)^2},$$
		so the critical points are solutions to the equation
		$$3(\nu_p-x)(\nu_p+x)^2 + 3(\nu_p+x)(\nu_p-x)^2 - (\nu_p+x)^2 - (\nu_p - x)^2 = 0.$$
		Further simplification yields a quadratic equation in $x$,
		$$(3\nu_p+2)x^2 + \nu_p^2(3\nu_p-2) = 0$$
		which has exactly one root in $(0,\nu_p)$ if $\nu_p>2/3$.
		This holds for arbitrary $p\in\D$, so we may conclude
		true positive skewness if $\nu_1>2/3$ and $\nu_1>\nu_0$ hold, by Lemma \ref{nup clopen}. 
		The mode and median of the L\'evy distribution are well-known (see, e.g., \cite{Nolan20}) 
		and can be computed directly as $\nu_0=1/3$ and $\nu_1=\frac12(1-\text{erfc}(\frac12))^{-2}
		\approx 2.17$, which completes the proof.
	\end{proof}
	
	\begin{proof}[Proof of Theorem \ref{chi-squared TPS}]
    	Fix $k\in\N$ and let $f$ be the density function of $\chi^2(k)$, i.e., $f(x)\equiv f(x;k)$ as in \eqref{chi-squared density}. 
    	Fix $p\in\D\setminus\{0\}$. There are two cases.
    
   	\textbf{Case 1:} $k=1,2$. Here, $f(x)$ is decreasing on its domain and so true positive skewness follows from 
    	Proposition \ref{Kovchegov P2} and Theorem \ref{Kovchegov T1}.
    
   	\textbf{Case 2:} $k>2$. As in the proof of Theorem \ref{Levy TPS}, to show that $\nu(\cdot)$ is increasing at $p$, it 
    	suffices to show that the ratio of the left and right parts
    		$$R_p(x) := \log\left(\frac{f(\nu_p-x)}{f(\nu_p+x)}\right) 
    		= \left(\frac k2 - 1\right) \log\left(\frac{\nu_p-x}{\nu_p+x}\right)+x$$
	for $x\in[0,\nu_p)$, has exactly one positive critical point, since $R_p(x)\to-\infty$ as $x\to\nu_p-$ and $R_p(0)=0$. 
	Observe that
		$$R'_p(x) = \frac{(2-k)\nu_p}{\nu_p^2-x^2}+1,$$
	so the critical points are solutions to the equation $x^2 - \nu_p^2 + (k-2)\nu_p = 0$, i.e., 
		$$x = \pm\sqrt{\nu_p^2-(k-2)\nu_p}.$$
	There is exactly one positive critical point when $\nu_p^2-(k-2)\nu_p>0$. Since $f$ has support on the positive half 
	line, then $\nu_p$ is always non-negative by Proposition \ref{bounds}. Thus $\nu(\cdot)$ is increasing at $p$ if 
	$\nu_p-k+2>0$. Now by Lemma \ref{nup clopen}, true positive skewness of $\chi^2(k)$ follows if the median satisfies 
	$\nu_1>k-2$, but this inequality is well-known (see, e.g., Sen \cite[Eq.~(1.4)]{Sen89} and the references therein).
	\end{proof}
	
	\begin{proof}[Proof of Theorem \ref{Weibull TPS}]
		By Proposition \ref{linearity}, it suffices to consider the $\Weibull(k,1)$ distribution.
		Let $f$ be its density function, i.e., $f(x)\equiv f(x;k,1)$ as in \eqref{Weibull density}.
		
		It is well-known (see, e.g., \cite{Rinne08}) that the $\Weibull(k,1)$ distribution
		has finite moments of all orders and is unimodal with median and mode given by
		\begin{equation}\label{Weibull mode/median}
			\nu_0 = \left(\frac{k-1}{k}\right)^{1/k},\qquad \nu_1 = (\log 2)^{1/k}.
		\end{equation}
		Notice that $\nu_0<\nu_1$ holds if and only if $k<(1-\log2)^{-1}$.
		
		Since the Weibull distribution has support on the positive half-line, then the second 
		part of the theorem follows from Proposition \ref{nu->inf, halfline}. For the first part 
		of the theorem, the ``only if'' part follows from \eqref{Weibull mode/median}.

		It remains to show the ``if'' part. For $k\le1$, $f$ is strictly decreasing, so we are
		done by Proposition \ref{Kovchegov P2}.
		
		Suppose $k>1$ and fix $p\ge1$. As in the proof of Theorem \ref{Levy TPS}, to show that 
		$\nu(\cdot)$ is increasing at $p$, it suffices to show that the log density ratio of the 
		left and right parts
		$$R_p(x):=\log\left(\frac{f(\nu_p-x)}{f(\nu_p+x)}\right)
			= (k-1)\left[\log(\nu_p-x)-\log(\nu_p+x)\right]-(\nu_p-x)^k+(\nu_p+x)^k$$
		for $x\in[0,\nu_p)$, has exactly one positive critical point, since $R_p(x)\to-\infty$ as
		$x\to\nu_p-$ and $R_p(0)=0$. Observe that
		$$R'_p(x) = k(\nu_p-x)^{k-1}+k(\nu_p+x)^{k-1}-\frac{k-1}{\nu_p-x}-\frac{k-1}{\nu_p+x},$$
		so the critical points are solutions to the equation
		\begin{equation}\label{Weibull Rp*}
			k(\nu_p^2-x^2)\left[(\nu_p-x)^{k-1}+(\nu_p+x)^{k-1}\right]-2(k-1)\nu_p = 0.
		\end{equation}
		Since $x\in[0,\nu_p)$, the binomial series for $(\nu_p-x)^{k-1}$ and 
		$(\nu_p+x)^{k-1}$ converge, hence
		$$(\nu_p-x)^{k-1}+(\nu_p+x)^{k-1} = 2\sum_{n=0}^\infty \binom{k-1}{2n} \nu_p^{k-2n-1} x^{2n},$$
		in the notation of the generalized binomial coefficient. Substituting into \eqref{Weibull Rp*}
		yields the equation
		\begin{equation}\label{Weibull E1}
			g(x) := 2k\sum_{n=1}^\infty \left(\binom{k-1}{2n} - \binom{k-1}{2n-2}\right) 
				\nu_p^{k-2n+1} x^{2n} + 2\nu_p(k\nu_p^k-k+1) = 0,
		\end{equation}
		where
		\begin{equation}\label{Weibull E2}
			\binom{k-1}{2n}-\binom{k-1}{2n-2} = \frac{k(k-1)\dots(k-2n+2)(k+1-4n)}{(2n)!}.
		\end{equation}
		
		We analyze the sign changes of the coefficients in \eqref{Weibull E1} to determine the number 
		of its positive roots by splitting into several cases for the value of $k$.
		
		\textbf{Case 1:} $2<k<3$. Then $k,k-1,k-2$ are positive and $k-3,\dots,k-2n+2$ are negative. 
		There are an even number (possibly zero, if $n=1,2$) of negative factors in $k(k-1)\dots(k-2n+2)$,
		so it is positive. Also note that $k+1-4n<0$ for all $n\ge1$, hence \eqref{Weibull E2} is 
		negative for all $n\ge1$.
		
		For the series expression $g(x)=\sum_{m=0}^\infty a_mx^m$, we have shown that $a_m<0$ holds for 
		non-zero even $m$ and $a_m=0$ holds for odd $m$. By Descartes' rule of signs for infinite series,
		$g^*_p$ has no positive real roots if $a_0\le0$ and has at most one positive real root if $a_0>0$.
		
		Suppose $a_0\le0$ such that $g$ has no positive real roots. By extension, $R'_p$ has no positive
		real roots, so $R_p$ has no positive extrema and is strictly monotonic on $[0,\nu_p)$. Since 
		$g_p(0)=0$ and $\lim_{x\to\nu_p-}g_p(x)=-\infty$, then $g_p$ is strictly negative on $[0,\nu_p)$,
		hence $f(\nu_p-x)<f(\nu_p+x)$ for all $x\in(0,\nu_p)$. By monotonicity of the integral, this
		contradicts \eqref{nup def 2 integral}. Thus $a_0>0$ holds, and $g$ has exactly one positive root
		in $(0,\nu_p)$, which implies that $\nu(\cdot)$ is increasing at $p$.
		
		This holds for
		arbitrary $p\ge1$, so $\nu(\cdot)$ is increasing on $\D\setminus\{0\}$. Moreover, since $a_0=
		2\nu_p(k\nu_p^k-k+1)>0$ holds, then $\nu_p>\left(\frac{k-1}{k}\right)^{1/k}=\nu_0$ holds for every 
		$p\ge1$,  and we are done.
		
		\textbf{Case 2:} $1<k<2$. The inequality $k+1-4n<0$ still holds for all $n\ge1$, but we now have 
		$k,k-1>0$ and $k-2,\dots,k-2n+2<0$. If $n>1$, then there are an odd number
		of negative factors in $k(k-1)\dots(k-2n+2)$, so the numerator of \eqref{Weibull E2} is positive. 
		If $n=1$, then the numerator is simply $k(k-3)<0$.
		
		Thus our coefficients in $g(x)=\sum_{m=0}^\infty a_mx^m$ are positive for even $m>2$ and zero for 
		odd $m$, with $a_2<0$. By Descartes' rule of signs, $g$ has at most two positive real roots if
		$a_0>0$ and at most one if $a_0\le0$.
		
		Suppose $a_0\le0$. If $g$ has a single positive root, then $\lim_{x\to\nu_p-}g(x)>0$. But 
		by continuity this limit tends to $-2(k-1)\nu_p<0$. If instead $g^*_p$ has no positive real roots,
		then by the same argument above we contradict \eqref{nup def 2 integral}. Thus $a_0>0$. In this 
		case, if $g$ has zero or two positive roots, then again $\lim_{x\to\nu_p-}g(x)>0$ and we reach a 
		contradiction. Hence $g$ has exactly one positive real root. We may now conclude in the same
		fashion as in Case 1.
		
		\textbf{Case 3:} $3<k<\frac1{1-\log2}$. We again look at the numerator of \eqref{Weibull E2}. If
		$n=1$, the numerator is $k(k-3)>0$. If $n=2$, the numerator is $k(k-1)(k-2)(k-7)<0$. If $n>2$, then
		$k(k-1)\dots(k-2n+2)$ has positive factors $k,\dots,k-3$, and an odd number of negative factors 
		$k-4,\dots,k-2n+2$. Additionally, $k+1-4n<0$ when $n>2$, so the numerator of \eqref{Weibull E2} is 
		positive.
		
		Our coefficients in $g(x)=\sum_{m=0}^\infty a_mx^m$ are zero for odd $m$, positive for even
		$m\ge6$, negative for $m=4$, and positive for $m=2$. This yields two sign changes and hence at most
		two positive roots of $g$ if $a_0>0$. Recall from Case 1 that $a_0>0$ holds if and only if 
		$\nu_p>\nu_0$ holds. If the latter holds, then $g$ must have exactly one positive root since $g$ 
		is negative at the right limit of its domain, and so $\nu(\cdot)$ is increasing at $p$. Now by 
		Lemma \ref{nup clopen}, true positive skewness follows if $\nu_1>\nu_0$, but this holds 
		immediately from \eqref{Weibull mode/median} and our assumption $k<\frac1{1-\log2}$.
				
		\textbf{Case 4:} $k=2$. We can plug $k=2$ directly into \eqref{Weibull Rp*} to obtain the roots
		$$x^2 = \nu_p^2-\frac12 = \nu_p^2-\nu_0^2.$$
		The argument in Case 1 can be adapted to show that $\nu_p>\nu_0$ holds for every $p\ge1$, 
		hence exactly one positive root exists.
		
		\textbf{Case 5:} $k=3$. Similarly, we plug $k=3$ into \eqref{Weibull Rp*} and obtain the roots
		$$x^4 = \nu_p^4-\frac23\nu_p = \nu_p(\nu_p^3-\nu_0^3)$$
		Again, since $\nu_p>\nu_0$ holds for every $p\ge1$, then exactly one positive root exists.
	\end{proof}
	
	\begin{proof}[Proof of Theorem \ref{Skew-normal TPS}]		
		Recall that $\phi,\Phi$ are the density and distribution functions of the standard Gaussian
		distribution. For simplicity, set
    	$$\Psi(x) := \sqrt{2\pi}\,\Phi(x) = \int_{-\infty}^xe^{-t^2/2}\,dt.$$
		We show true positive skewness for positive shape parameter $\alpha>0$; the proof for true 
		negative skewness for $\alpha<0$ is analogous, and clearly the skew-normal is simply the normal 
		distribution, which is symmetric, when $\alpha=0$. Thus, fix $\alpha>0$ and let $f$ be the
		density function of $\SkewNormal(\alpha)$, i.e., $f(x)\equiv f(x;\alpha)$ as in
		\eqref{skew-normal density}.
		
		Fix $p\in\D\setminus\{0\}$. To show that $\nu(\cdot)$ is increasing at $p$, it suffices to show 
		that the log density ratio of the left and right parts
		$$R_p(x) := \log\left(\frac{f(\nu_p-x)}{f(\nu_p+x)}\right)
    		= 2\nu_px + \log\Psi(\alpha(\nu_p-x)) - \log\Psi(\alpha(\nu_p+x))$$
		has exactly one positive root $x_0$, satisfying $R_p(x)>0$ for $0<x<x_0$ and $R_p(x)<0$ for 
		$x>x_0$. This condition holds if the following four conditions can be verified:
		\begin{enumerate}[leftmargin=4em,label=(\Roman*)]
			\item $R_p(0) = 0$;
			\item $R''_p(x)<0$ for all $x>0$.
			\item $R'_p(0)>0$; and
			\item $R'_p(x)<0$ for some $x>0$.
		\end{enumerate}
		
		\noindent Condition (I) holds trivially. To prove condition (II), define
		$$\psi(x) := \log\Psi(x),$$
		hence
		\begin{align}
			R_p(x) &= 2\nu_px + \psi(\alpha(\nu_p-x)) - \psi(\alpha(\nu_p+x)); \nonumber\\
			R_p'(x) &= 2\nu_p - \alpha\psi'(\alpha(\nu_p-x)) - \alpha\psi'(\alpha(\nu_p+x)); 
				\label{Rp' psi} \\
			R_p''(x) &= \alpha^2\psi''(\alpha(\nu_p-x)) - \alpha^2\psi''(\alpha(\nu_p+x)). 
				\nonumber
		\end{align}
		Condition (II) holds if and only if $\psi''(\alpha(\nu_p-x)) < \psi''(\alpha(\nu_p+x))$ holds 
		for every $x>0$. It suffices then to show that $\psi''$ is monotonically increasing everywhere, 
		i.e., that $\psi'''(x)>0$ holds for every $x\in\R$.
	
		The third logarithmic derivative of $\Psi$ is given by
		\begin{align}\label{skew-normal 1}
			\psi'''(x) &= \frac{\Psi'''(x)}{\Psi(x)} - \frac{3\Psi'(x)\Psi''(x)}{\Psi(x)^2} 
				+ 2\left(\frac{\Psi'(x)}{\Psi(x)}\right)^3 \nonumber \\
			&= \frac{x^2-1}{\Psi(x)}\,e^{-x^2/2} + \frac{3x}{\Psi(x)^2}\,e^{-x^2} 
				+ \frac{2}{\Psi(x)^3}\,e^{-3x^2/2} \nonumber \\
			&= \frac{e^{-3x^2/2}}{\Psi(x)^3}\Big[(x^2-1)e^{x^2}\Psi(x)^2 + 3xe^{x^2/2}\Psi(x) + 2\Big].
		\end{align}
		One may compute directly
		\begin{equation*}\label{skew-normal 2}\begin{split}
			\psi'''(1) &= \frac{e^{-3/2}}{\Psi(1)^3}\Big[3e^{1/2}\Psi(1) + 2\Big] > 0; \\
			\psi'''(-1) &= \frac{e^{-3/2}}{\Psi(-1)^3}\Big[-3e^{1/2}\Psi(-1)+2\Big] \approx 0.12 > 0,
		\end{split}\end{equation*}
		so if $\psi'''$ has no zeroes, then continuity implies that $\psi'''$ is positive everywhere. 
		
		Define $u\equiv u(x):=(\psi'(x))^{-1}=e^{x^2/2}\Psi(x)$, noticing that $u$ is strictly 
		positive. By \eqref{skew-normal 1}, $\psi'''(x)=0$ holds if and only if
		$$(x^2-1)u^2 + 3xu + 2 = 0,$$
		which, by the quadratic formula, holds exactly when
		\begin{equation*}
			u = \frac{-3x \pm \sqrt{x^2+8}}{2(x^2-1)} = \mp\frac{4}{\sqrt{x^2+8}\pm 3x} =: Q^\pm(x).
		\end{equation*}
		In other words, the zeroes of $h'''$ are solutions to the equations 
		\begin{equation}\label{u Q}
			u(x) = Q^\pm(x),
		\end{equation}
		with special care needed for the asymptotic values $x=\pm1$ for $Q^\pm$. We handled these 
		cases in \eqref{skew-normal 1}, so it suffices to consider $x\neq\pm1$.
	
		Observe that $Q^\pm(x)<0<u(x)$ holds if $x>1$, so solutions to \eqref{u Q} must fall in 
		$(-\infty,1)$. If $x<-1$, then both $Q^+(x)$ and $Q^-(x)$ are positive and, one can easily 
		verify the inequalities $0<Q^-(x)<Q^+(x)$. On the other hand, if $-1<x<1$, then $Q^+(x)<0<u(x)$
		whereas $Q^-(x)>0$. Therefore, it suffices to show that $u(x) < Q^-(x)$ for every $x<1$, 
		$x\neq-1$.
	
		The key observation is that $u(x)$ is simply the reflection of the Mills' ratio of the standard 
		Gaussian, i.e.,
		$$u(-x) = M(x) := \frac{1-\Phi(x)}{\phi(x)} = e^{x^2/2}\int_x^\infty e^{-t^2/2}\,dt,$$
		for which very precise bounds in terms of elementary functions exist. In particular, we use 
		the known bound (see, e.g., \cite[Eq.~7.8.4]{NIST:DLMF}, or \cite{Sampford, Shenton})
		$$M(x) < \frac{4}{\sqrt{x^2+8}+3x},\qquad \forall\,x>-1,$$
		which yields
		$$u(x) < \frac{4}{\sqrt{x^2+8}-3x} = Q^-(x),\qquad \forall\,x<1,\,x\neq-1.$$
		We have now shown that \eqref{u Q} has no solutions and thus that $\psi'''$ has no zeroes, 
		proving condition (II).
		
		Since $R_p(\cdot)$ cannot be non-positive on the entire half-line, as a consequence of 
		\eqref{nup def 2 integral}, then condition (II) implies condition (III). For condition (IV),
		observe that $\psi'(x)=e^{-x^2/2}\,\Psi(x)^{-1}$ is decreasing and tends to infinity as 
		$x\to-\infty$, so \eqref{Rp' psi} implies that $R'_p(x)\to-\infty$ as $x\to\infty$. 
		
		It remains to show that $\nu_p>\nu_0$ for every $p\in\D\setminus\{0\}$. Notice that 
		$R'_p(0)=2\nu_p-2\alpha\psi'(\alpha\nu_p)$ is increasing as a function of $\nu_p$ given 
		the decrease of $\psi'$. The mode $\nu_0$ is the unique value satisfying $f'(\nu_0)=0$, which 
		is equivalent to the equality $R'_0(0)=2\nu_0-2\alpha\psi'(\alpha\nu_0)=0$. Condition (III) 
		and the increase of $\nu_p\mapsto R'_p(0)$ now yield $\nu_0<\nu_p$ for every 
		$p\in\D\setminus\{0\}$.
	\end{proof}

\subsection{Proofs of results for Section \ref{Ss:Weak Limits}}

	\begin{proof}[Proof of Theorem \ref{weak limits}]
	 	To prove the first part of the theorem, fix $p\in\D_X$ and define
		$$f_n(a):=E|X_n-a|^p,\quad a\in\R; \quad\quad\quad \nu_n:=\argmin_{a\in\R}f_n(a);$$
		$$f(a):=E|X-a|^p,\quad a\in\R; \quad\quad\quad \nu:=\argmin_{a\in\R}f(a);$$
		so that $\nu_n$ and $\nu$ are the $p$-means of $X_n$ and $X$ respectively by Definition 
		\ref{nup def 1}.
		
		Let $a_n$ be any sequence of numbers converging to $a$.
		Observe that the random variable $X_n-a_n$ has distribution function $\tilde F_n(x)=F_n(x+a_n)$.
		Similarly, $X-a$ has distribution function $\tilde F(x)=F(x+a)$, so the uniform convergence 
		$F_n\to F$ implies the pointwise convergence $\tilde F_n(x)\to\tilde F(x)$. Therefore, $X_n-a_n$
		converges to $X-a$ in distribution.
		
		By Jensen's inequality, we have
		$$\sup_nE|X_n-a_n|^{p+\epsilon} \le 2^{p+\epsilon}\sup_n(E|X_n|^{p+\epsilon}
			+ |a_n|^{p+\epsilon})<\infty,$$
		where finiteness holds by the convergence of $a_n\to a$ and by assumption on the $X_n$'s. 
		It follows from, e.g., \cite[Exercise 3.2.5]{Durrett2019}, that
		\begin{equation}\label{weak limits 1}
			f_n(a_n)=E|X_n-a_n|^p\to E|X-a|^p=f(a).
		\end{equation}
				
		Suppose that the $\nu_n$'s are contained in a compact interval. Then every subsequence 
		$\nu_{n_k}$ has a limit point $\nu_*$. Since $f_{n_k}(\nu_{n_k})\le f_{n_k}(a)$ for all $a\in\R$,
		then by taking $k\to\infty$ we obtain $f(\nu_*)\le f(a)$ holds for every $a\in\R$ via 
		\eqref{weak limits 1}. Then $\nu_*$ minimizes $f$, and since the minimizer of $f$ is unique 
		(see Definition \ref{nup def 1}), we obtain $\nu_*=\nu$. Every subsequence of $\nu_n$ converges 
		to $\nu$, hence $\nu_n\to\nu$.
		
		Consider $\nu_n$ and $\nu$ as functions of $p$, so $\nu_n$ converges pointwise to $\nu$. True 
		non-negative skewness of $X_n$ implies $\nu_n$ is non-decreasing, and the pointwise limit of 
		monotone functions is monotone, hence $\nu$ is non-decreasing.
		
		It remains to show that the $\nu_n$'s are contained in a compact interval. Suppose otherwise,
		so we have a subsequence $\nu_{n_k}\to\infty$ as $k\to\infty$. (The argument is similar if 
		$\nu_{n_k}\to-\infty$.) We have from \eqref{weak limits 1} the pointwise convergence $f_n\to f$, 
		so for fixed $\delta>0$, we may choose large enough $K$ such that $\nu_{n_k}>\nu$ and $|f_{n_k}
		(\nu)-f(\nu)|<\delta$ for all $k\ge K$. Since $f_{n_k}$ is strictly convex with minimizer 
		$\nu_{n_k}$, then $f_{n_k}(a)<f(\nu)+\delta$ for all $a\in[\nu,\nu_{n_k}]$.
		
		Clearly $f\to\infty$ as $a\to\infty$, so choose $x>\nu_{n_K}>\nu$ large enough such that 
		$f(x)>f(\nu)+2\delta$. For any $N>0$, there exists $k>K$ large enough such that 
		$n_k>N$ and $\nu_{n_k}>x$ such that $f_{n_k}(x)<f(\nu)+\delta<f(x)-\delta$ as shown above. 
		This contradicts the pointwise convergence $f_{n_k}(x)\to f(x)$, and so concludes the proof of 
		the first part.
		
		The second part of the theorem now easily follows from the fact that $\nu_n(p)\to\nu_n(p)$. 
		Indeed, for $p>p'$, we have by assumption that $\nu_n(p)-\nu_n(p')>C(p-p')$ holds for every 
		$n\in\N$, so taking limits yields $\nu(p)-\nu(p')\ge C(p-p')>0$, hence $\nu(\cdot)$ is strictly
		increasing.
	\end{proof}

\subsection{Proofs of results for Section \ref{Ss:Additional Criteria}}
	
	\begin{proof}[Proof of Theorem \ref{u(X)}]
		Let $X$ be a continuous random variable with density function $f_X$ decreasing on its support $(0, R)$ for possibly infinite $R$. Let $Y=u(X)$ where $u$ is convex and strictly increasing on the support of $X$. We define $w=u^{-1}$. Note that $\D_X=\D_Y=[1,\infty)$. We write $\D$ for this domain. Let $f_Y$ be the density of $Y$. It suffices to show that
		\begin{equation}\label{oneroot}
			\log\left(\frac{f_Y(\nu_p - y)}{f_Y(\nu_p + y)}\right)
		\end{equation}
		changes sign exactly once on the interval $(0, \infty)$, for all $p\in\D$. First note that as 
		$y$ increases, $\nu_p-y$ approaches 0. Once $y>\nu_p$, $f_Y(\nu_p - y)= 0$. Thus, on the 
		interval $y\in (\nu_p, \infty)$,
		\begin{equation*}
			\log\left(\frac{f_Y(\nu_p - y)}{f_Y(\nu_p + y)}\right) = -\infty
		\end{equation*}
		On the interval $u\in (0, \nu_p]$, we have $\{\nu_p - y\} \in [0, \nu_p)$, and $\{\nu_p + y\} 
		\in [\nu_p, 2\nu_p)$. We expand $f_Y$:
		\begin{align}
			\log\left(\frac{f_Y(\nu_p - y)}{f_Y(\nu_p + y)}\right) 
			&= \log\left(\frac{-f_X[w(\nu_p - y)]w'(\nu_p - y)}{-f_X[w(\nu_p + y)]w'(\nu_p + y)}\right)
				\nonumber\\
			&=\log\left(\frac{f_X[w(\nu_p - y)]}{f_X[w(\nu_p + y)]}\right) + 
				\log\left(\frac{w'(\nu_p - y)}{w'(\nu_p + y)}\right)\label{LHSRHS}
		\end{align}
		Because $w=u^{-1}$ is an increasing function and $f_X$ is a decreasing function, 
		\begin{equation*}
			f_X[w(\nu_p - y)] > f_X[w(\nu_p + y)]
		\end{equation*}
		\begin{equation*}
			\log\left(\frac{f_X[w(\nu_p - y)]}{f_X[w(\nu_p + y)]}\right) > 0
		\end{equation*}
		The function $u$ is convex and increasing, $w$ is concave, and therefore $w'$ is decreasing. 
		It follows that 
		\begin{equation*}
			w'(\nu_p - y) > w'(\nu_p + y) 
		\end{equation*}
		\begin{equation*}
			\log\left(\frac{w'(\nu_p - y)}{w'(\nu_p + y)}\right) > 0
		\end{equation*}
		which implies that \eqref{LHSRHS} is strictly positive on $(0, \nu_p]$. Since \eqref{oneroot} 
		is strictly positive on $(0, \nu_p]$ and strictly negative on $(\nu_p, \infty)$, it changes 
		sign exactly once on the interval $(0, \infty)$. This satisfies Lemma \ref{Kovchegov L2} for 
		all $p\in\D$, so we obtain true positive skewness.
	\end{proof}

	\begin{proof}[Proof of Theorem \ref{criteria}]
		Let $f$ denote the density function of $X$. For $p\in\D\setminus\{0\}$, define
		$$h_p(c):=f(\nu_p+c)-f(\nu_p-c),\ c\in[0,\nu_p]$$
		and
		$$\s_p:=\{c>0:h_p(c)>0\}.$$
		If $\s_p$ is non-empty, then its infimum 
		\begin{equation}\label{cp}
			c_p:=\inf\s_p
		\end{equation}
		exists and is non-negative. Note that if $f$ is continuous, then so is $h_p$. Then $\s_p$ 
		is the preimage of an open set under $h_p$, so $\s_p$ is also open and $c_p\notin\s_p$. 
		Since $h_p(0)=0$, then continuity implies
		\begin{equation}\label{hp(cp)=0}
			h_p(c_p)=0.
		\end{equation}
		Similarly, if $f$ is differentiable, then so is $h_p$. Because $h_p(c)\le0$ for all $c<c_p$, then
		\begin{equation}\label{hp'(cp)>=0}
			h_p'(c_p)\ge0.
		\end{equation}
	
		\noindent To prove the theorem, we will require several lemmas.
	
	\begin{lemma}\label{criteria l1}
		 The density $f$ is convex on $(0,\theta_1)\cup(\theta_2,\infty)$, concave on 
		 $(\theta_1,\theta_2)$, strictly increasing on $(0,\nu_0)$, and strictly decreasing 
		 on $(\nu_0,\infty)$.
	\end{lemma}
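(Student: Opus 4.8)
The plan is to read off all four assertions directly from the hypotheses on the inflection points and unimodality, using the definition of inflection points as sign changes of $f''$ and the definition of the mode $\nu_0$ as the unique maximizer of the unimodal density $f$.

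First I would treat the convexity/concavity claims. By hypothesis $f\in C^2(0,\infty)$ has exactly two positive inflection points $\theta_1<\theta_2$, and these are precisely the points where $f''$ changes sign. Since $f$ is a density on $(0,\infty)$, it must decay to $0$ at $\infty$, which forces $f''>0$ on $(\theta_2,\infty)$ (a function tending to $0$ from above after its last inflection point must be convex there). Propagating the sign changes inward across $\theta_2$ and then $\theta_1$ gives $f''<0$ on $(\theta_1,\theta_2)$ and $f''>0$ on $(0,\theta_1)$. This yields convexity on $(0,\theta_1)\cup(\theta_2,\infty)$ and concavity on $(\theta_1,\theta_2)$.

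Next I would handle monotonicity. Unimodality with a unique mode $\nu_0$ means $f$ is increasing on $(0,\nu_0)$ and decreasing on $(\nu_0,\infty)$; the content to verify is that the monotonicity is \emph{strict}. On $(0,\theta_1)$ the density is convex and, being part of the increasing branch, has $f'\ge0$; strict convexity (from $f''>0$) prevents $f'$ from vanishing on any subinterval, so $f$ is strictly increasing there, and the same argument on $(\theta_2,\infty)$ gives strict decrease. On the middle interval $(\theta_1,\theta_2)$, where $f$ is concave, I would use that $\theta_1<\nu_0<\theta_2$: on $(\theta_1,\nu_0)$ the concave function is still climbing to its maximum so $f'>0$, and on $(\nu_0,\theta_2)$ it is descending so $f'<0$, with strictness again guaranteed because a concave function cannot be locally constant unless globally affine, which is incompatible with the strict convexity on the outer intervals.

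The main obstacle is ruling out the degenerate possibility that $f'$ vanishes on a nontrivial subinterval (i.e.\ that $f$ is locally flat), which would violate \emph{strict} monotonicity. This is where the interplay of the convexity structure and unimodality does the work: a flat stretch would either contradict $f''$ having a definite sign on that stretch or would introduce additional inflection points beyond $\theta_1,\theta_2$, contradicting the hypothesis that there are exactly two. Once this is dispatched, the four claims follow by assembling the sign information on $f'$ and $f''$ over the three intervals.
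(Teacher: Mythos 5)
Your overall route---getting the convex--concave--convex pattern from sign alternation of $f''$ at the two inflection points, then upgrading unimodality to \emph{strict} monotonicity by excluding flat stretches---is the same strategy as the paper's. Your anchor for the sign pattern is different but valid: you force convexity of the tail from decay at infinity (a positive concave function on a ray cannot tend to $0$), whereas the paper anchors in the middle interval: if $f$ were concave on $(0,\theta_1)$ it would be convex on $(\theta_1,\theta_2)$, which is impossible because the interior maximizer $\nu_0$ lies there. Either argument settles the convexity/concavity claims.

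The genuine gap is in your strictness argument near the mode. The principle you invoke---``a concave function cannot be locally constant unless globally affine''---is false: $x\mapsto\min(x,1)$ is concave and locally constant without being affine. Worse, the degenerate case this step must exclude is precisely a flat stretch of $f$ at its maximum value ending at $\nu_0$; such an $f$ is perfectly concave there, and it produces no extra sign change of $f''$ either (the stretch has $f''=0$ inside an interval where $f''\le 0$ anyway), so your fallback ``extra inflection points'' argument does not catch it. What actually rules it out is uniqueness of the mode: a flat stretch at the maximum makes every point of it a mode, contradicting the hypothesis that $f$ is unimodal with unique mode $\nu_0$ (this uniqueness is how the paper defines $\nu_0$). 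A second, smaller issue: strict positivity $f''>0$ on the outer intervals does not follow from ``exactly two inflection points'' under the sign-change definition ($f''$ may vanish without changing sign), and a flat stretch of $f$ abutting $0$ likewise creates no third sign change of $f''$; that case is cleanly excluded by condition (1) of Theorem \ref{criteria}, since $f'/f>1/\nu_0>0$ on $(0,\theta_1)$ gives $f'>0$ there outright---which is what underlies the paper's bare assertion that $f'>0$ near $0$. With these two repairs (unique mode near $\nu_0$, condition (1) near $0$), your proof goes through.
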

	\begin{proof}
		Since $f$ is positive on its support, then it must be increasing on $(0,\nu_0)$. If $f$ 
		is concave on $(0,\theta_1)$, then it is convex on $(\theta_1,\theta_2)$, contradicting 
		the fact that $\nu_0\in(\theta_1,\theta_2)$. The convexity part of the lemma follows, and 
		since $\theta_2>\nu_0$ and $f$ is unimodal, then $f$ is decreasing on $(\nu_0,\infty)$. 
		Moreover, since $f'>0$ near 0 and $f'<0$ near infinity, then $f'$ has an odd number of 
		zeroes. Integrability of $f$ implies $f'\to0$, hence $f'$ is non-zero everywhere on 
		$(0,\infty)$ except at $\nu_0$, otherwise $f$ would have at least three inflection points.
	\end{proof}
	
	\noindent Since $f$ is $C^2$ on $(0,\infty)$, then \eqref{hp(cp)=0} and \eqref{hp'(cp)>=0} hold. Moreover, 
	$h_p$ is $C^2$. Let 
	$$\D^*:=\{p\in\D\setminus\{0\}:\nu_p>(\nu_0+\theta_2)/2\}. \footnote{\ If, as in Remark 
		\ref{criteria relaxations}, we use 
		the condition $\nu_1+c_1>\theta_2$ rather than $\nu_1>(\nu_0+\theta_2)/2$, then instead let 
		$\D^*:=\{p\in\D:\nu_p\ge\nu_1+c_1\}$. The arguments that follow still apply with trivial 
		modifications.}
	$$
	By assumption $1\in\D^*$. We show that membership in $\D^*$ is sufficient to make \eqref{hp'(cp)>=0} 
	a strict inequality.
	
	\begin{lemma}\label{criteria l2}
		$h_p'(c_p)>0$ for all $p\in\D^*$.
	\end{lemma}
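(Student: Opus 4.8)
The plan is to argue by contradiction: suppose $h_p'(c_p) = 0$ for some $p \in \D^*$, and use the sign constraints on $f'/f$ together with the convexity structure from Lemma \ref{criteria l1} to derive an impossibility. Recall that $h_p(c) = f(\nu_p + c) - f(\nu_p - c)$, so that $h_p'(c) = f'(\nu_p + c) + f'(\nu_p - c)$. At $c = c_p$ we have the two facts \eqref{hp(cp)=0} and \eqref{hp'(cp)>=0}, namely $f(\nu_p + c_p) = f(\nu_p - c_p)$ and $f'(\nu_p + c_p) + f'(\nu_p - c_p) \ge 0$. The goal is to promote the latter to a strict inequality.

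First I would use the defining equation \eqref{hp(cp)=0} to rewrite the logarithmic derivatives. Since $f(\nu_p + c_p) = f(\nu_p - c_p) =: m > 0$, the condition $h_p'(c_p) \ge 0$ is equivalent to $\frac{f'(\nu_p + c_p)}{f(\nu_p + c_p)} + \frac{f'(\nu_p - c_p)}{f(\nu_p - c_p)} \ge 0$, i.e., the sum of the logarithmic derivatives $(f'/f)(\nu_p + c_p) + (f'/f)(\nu_p - c_p)$ is non-negative; I want to show this sum is strictly positive. The strategy is to locate the two points $\nu_p - c_p$ and $\nu_p + c_p$ relative to the inflection points $\theta_1, \theta_2$ and the mode $\nu_0$, and then invoke hypotheses (1) and (2) to bound $f'/f$ from below at each point.

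The key geometric step is to pin down where the arguments $\nu_p \pm c_p$ fall. Since $p \in \D^*$ means $\nu_p > (\nu_0 + \theta_2)/2$, I expect to show $\nu_p + c_p > \theta_2$ (so that $f$ is convex and decreasing at $\nu_p + c_p$, placing it in the regime governed by hypothesis (2)), while $\nu_p - c_p$ lies to the left of the mode, in the regime governed by hypothesis (1). If $\nu_p + c_p > \theta_2$, then hypothesis (2) gives $(f'/f)(\nu_p + c_p) > -1/\nu_0$. For the left point, if $\nu_p - c_p < \theta_1$ then hypothesis (1) directly gives $(f'/f)(\nu_p - c_p) > 1/\nu_0$, and summing yields strict positivity. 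The case where $\nu_p - c_p \in [\theta_1, \nu_0]$ requires separate handling: here $f$ is still increasing (Lemma \ref{criteria l1}), so $f'(\nu_p - c_p) \ge 0$, and combined with the strict lower bound on the right point one still concludes the sum is positive. The remaining possibility, $\nu_p - c_p > \nu_0$, would force both points to lie right of the mode where $f$ is decreasing, contradicting $f(\nu_p + c_p) = f(\nu_p - c_p)$ together with strict monotonicity; so this case is vacuous.

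The main obstacle I anticipate is establishing the inequality $\nu_p + c_p > \theta_2$ rigorously for every $p \in \D^*$, since $c_p$ is defined only implicitly as an infimum and its dependence on $p$ is not transparent. I suspect this is precisely the content that the paper defers to Lemma \ref{criteria l4} (as hinted in Remark \ref{criteria relaxations}, which mentions deriving $\nu_p + c_p > \theta_2$ "via Lemma \ref{criteria l4}"). Absent that lemma, the naive bound $c_p \ge 0$ only gives $\nu_p + c_p \ge \nu_p > (\nu_0 + \theta_2)/2$, which need not exceed $\theta_2$; so I would either invoke the forthcoming lemma or, alternatively, exploit the symmetry forced by \eqref{hp(cp)=0} — namely that equal density values at $\nu_p \pm c_p$ with $\nu_p$ far to the right of $\nu_0$ pushes $\nu_p + c_p$ deep into the decreasing convex tail — to obtain the needed inflection-point separation.
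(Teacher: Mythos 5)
Your proposal has two genuine gaps. The most serious is your handling of the case $\nu_p - c_p \in [\theta_1, \nu_0]$. There you combine $f'(\nu_p - c_p) \ge 0$ with hypothesis (2), $(f'/f)(\nu_p + c_p) > -1/\nu_0$, and claim the sum of logarithmic derivatives is positive. It is not: those two facts only give
\[
\frac{f'(\nu_p+c_p)}{f(\nu_p+c_p)} + \frac{f'(\nu_p-c_p)}{f(\nu_p-c_p)} > -\frac{1}{\nu_0},
\]
a negative lower bound, which is even weaker than the inequality $h_p'(c_p)\ge 0$ you already have from \eqref{hp'(cp)>=0}. Hypotheses (1) and (2) give no control on $f'/f$ over $(\theta_1,\nu_0)$, so no summation of log-derivative bounds can close this case. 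The paper's proof uses an entirely different, second-order argument here: on $[\theta_1,\nu_0)$ the density is concave (Lemma \ref{criteria l1}) while at $\nu_p+c_p>\theta_2$ it is convex, so if $h_p'(c_p)=0$ then $h_p''(c_p)=f''(\nu_p+c_p)-f''(\nu_p-c_p)>0$; then $c_p$ would be a strict local minimum of $h_p$ with $h_p(c_p)=0$, forcing $h_p>0$ at points just below $c_p$ and contradicting $c_p=\inf\s_p$. This convexity--concavity contradiction is the missing idea; your log-derivative summation is valid only in the subcase $\nu_p-c_p\in(0,\theta_1)$, where it coincides with the paper's argument.

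Second, the key inequality $\nu_p+c_p>\theta_2$ cannot be obtained by invoking Lemma \ref{criteria l4}, as you propose: that lemma is proved after Lemma \ref{criteria l2}, and its proof uses Lemma \ref{criteria l2} (the positivity of the denominator $f'(\nu_p-c_p)+f'(\nu_p+c_p)$ is exactly what \ref{criteria l2} supplies), so your route is circular. The paper's derivation is elementary and self-contained: for $c\in(0,\nu_p-\nu_0)$ both $\nu_p-c$ and $\nu_p+c$ lie to the right of the mode, where $f$ is strictly decreasing, so $f(\nu_p+c)$ decreases and $f(\nu_p-c)$ increases in $c$; hence $h_p<0$ on $(0,\nu_p-\nu_0]$, which forces $c_p>\nu_p-\nu_0$, and then the definition of $\D^*$ gives $\nu_p+c_p>2\nu_p-\nu_0>\theta_2$. (The mention of Lemma \ref{criteria l4} in Remark \ref{criteria relaxations} concerns only the relaxed hypothesis $\nu_1+c_1>\theta_2$, not the theorem as stated.) Your case analysis locating $\nu_p-c_p$ in $(0,\nu_0)$, and the vacuousness of $\nu_p-c_p>\nu_0$, are correct and match the paper.
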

	\begin{proof}
		First we locate $\nu_p+c_p$ and $\nu_p-c_p$. Note that $f(\nu_p-c)$ is increasing and $f(\nu_p+c)$ 
		is decreasing for $c\in(0,\nu_p-\nu_0)$. The fact that $h_p$ is negative near 0 and $h_p(c_p)=0$ 
		implies $c_p>\nu_p-\nu_0$. By assumption $\nu_p>(\nu_0+\theta_2)/2$, so it follows that 
		\begin{equation}\label{nup+cp>theta}
			\nu_p+c_p>\theta_2.
		\end{equation}
		On the other hand, we easily have $\nu_p-c_p>0$, otherwise $h_p(c_p)=f(\nu_p+c_p)>0$. If 
		$\nu_p-c_p>\nu_0$, then $f(\nu_p-c_p)>f(\nu_p+c_p)$ by Lemma \ref{criteria l1}, again contradicting 
		$h_p(c_p)=0$. Thus
		\begin{equation}\label{nup-cp<nu0}
			0<\nu_p-c_p<\nu_0.
		\end{equation}
		Now suppose for the sake of contradiction that $h'_p(c_p)=0$.  Either $\nu_p-c_p\in(0,\theta_1)$ or 
		$\nu_p-c_p\in[\theta_1,\nu_0)$ by \eqref{nup-cp<nu0}. If the latter holds, then $f''(\nu_p-c_p)<0$, 
		and \eqref{nup+cp>theta} implies $f''(\nu_p+c_p)>0$. Note that 
		$h''_p(c_p)=f''(\nu_p+c_p)-f''(\nu_p-c_p)$, hence $h''_p(c_p)>0$. By continuity of $h''_p$ and 
		\eqref{hp(cp)=0}, $h_p$ is strictly convex and thus positive in a neighborhood of $c_p$ (excluding 
		$c_p$ itself), contradicting the minimality of $c_p$ in $\s_p$.
	
		Suppose instead that $\nu_p-c_p\in(0,\theta_1)$. Then from conditions (1) and (2) and equations 
		\eqref{nup+cp>theta} and \eqref{nup-cp<nu0}, we have the inequalities $\nu_0f'(\nu_p-c_p)>
		f(\nu_p-c_p)$ and $\nu_0f'(\nu_p+c_p)>-f(\nu_p+c_p)$. It follows that $\nu_0h'_p(c_p)>-h_p(c_p)=0$
		and we obtain a contradiction.
	\end{proof}
	
	\noindent Next, we prove some properties of $c_p$.
	
	\begin{lemma}\label{criteria l3}
		The map $p\mapsto c_p$ is continuously differentiable in $\D^*$.
	\end{lemma}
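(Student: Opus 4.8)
The plan is to obtain differentiability from the implicit function theorem applied to the jointly smooth map $G(p,c):=h_p(c)=f(\nu_p+c)-f(\nu_p-c)$. First I would record that $G$ is continuously differentiable on $(\D\cap(1,\infty))\times(0,\infty)$: since $f\in C^2(0,\infty)$ and $p\mapsto\nu_p$ is $C^1$ by Proposition \ref{nup C1}, the partials $\partial_c G=f'(\nu_p+c)+f'(\nu_p-c)$ and $\partial_p G=\big(f'(\nu_p+c)-f'(\nu_p-c)\big)\,\tfrac{d\nu_p}{dp}$ are continuous. At a fixed $p_0\in\D^*$ we have $G(p_0,c_{p_0})=0$ by \eqref{hp(cp)=0}, while the nondegeneracy condition $\partial_c G(p_0,c_{p_0})=h_{p_0}'(c_{p_0})>0$ is exactly Lemma \ref{criteria l2}. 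The implicit function theorem therefore furnishes a neighborhood $U\times V$ of $(p_0,c_{p_0})$ and a $C^1$ function $\gamma:U\to V$ with $\gamma(p_0)=c_{p_0}$ such that, for $(p,c)\in U\times V$, one has $h_p(c)=0$ if and only if $c=\gamma(p)$.

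The remaining, and main, task is to identify this local branch with $c_p=\inf\s_p$, which reduces to proving that $p\mapsto c_p$ is continuous at $p_0$. Upper semicontinuity is immediate: for small $\eta>0$ the strict monotonicity $h_{p_0}'(c_{p_0})>0$ gives $h_{p_0}(c_{p_0}+\eta)>0$, so by joint continuity $h_p(c_{p_0}+\eta)>0$ for $p$ near $p_0$, whence $c_p\le c_{p_0}+\eta$. Lower semicontinuity is the crux and hinges on showing that $h_p$ has no earlier zero, that is, $h_p<0$ strictly on $(0,c_p)$ for every $p\in\D^*$; granting this, $h_{p_0}<0$ on the compact set $[\eta,c_{p_0}-\eta]$ and $h_p<0$ near $0$ (both arguments lie in the strictly decreasing region $(\nu_0,\infty)$), so joint continuity forces $h_p<0$ on $(0,c_{p_0}-\eta]$ for $p$ near $p_0$, giving $c_p\ge c_{p_0}-\eta$.

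The strict negativity of $h_p$ on $(0,c_p)$ is the heart of the argument, and I expect it to be the principal obstacle; the proof mirrors Lemma \ref{criteria l2}. Suppose $h_p(L)=0$ for some $L\in(0,c_p)$. Since $h_p\le0$ on $(0,c_p)$ by definition of $c_p$, the point $L$ is an interior maximum, so $h_p'(L)=0$ and $h_p''(L)\le0$. From $f(\nu_p+L)=f(\nu_p-L)$ and Lemma \ref{criteria l1} one gets $0<\nu_p-L<\nu_0$ (otherwise both arguments lie in the decreasing region and $h_p(L)<0$), and the defining inequality $\nu_p>(\nu_0+\theta_2)/2$ of $\D^*$ then yields $\nu_p+L>2\nu_p-\nu_0>\theta_2$, exactly as in \eqref{nup+cp>theta}. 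Hence $f''(\nu_p+L)>0$, and $h_p''(L)=f''(\nu_p+L)-f''(\nu_p-L)\le0$ forces $f''(\nu_p-L)>0$, i.e. $\nu_p-L\in(0,\theta_1)$. Now conditions (1) and (2) of Theorem \ref{criteria} give $f'(\nu_p-L)>f(\nu_p-L)/\nu_0$ and $f'(\nu_p+L)>-f(\nu_p+L)/\nu_0$, while $h_p'(L)=0$ reads $f'(\nu_p+L)=-f'(\nu_p-L)$; combining these with $f(\nu_p+L)=f(\nu_p-L)$ produces $f'(\nu_p-L)<f(\nu_p-L)/\nu_0$, a contradiction.

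With strict negativity in hand, continuity of $c_p$ follows as above. Finally I would close the argument: by continuity, $c_p\in V$ for $p$ in a neighborhood of $p_0$, and since $h_p(c_p)=0$ the uniqueness clause of the implicit function theorem gives $c_p=\gamma(p)$ there, so $c_p$ inherits the $C^1$ regularity of $\gamma$. As $p_0\in\D^*$ (interior, where Proposition \ref{nup C1} applies) was arbitrary, $p\mapsto c_p$ is continuously differentiable on $\D^*$.
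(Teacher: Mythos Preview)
Your proof is correct and uses the same implicit function theorem strategy as the paper, with the nondegeneracy $\partial_c G(p,c_p)=h_p'(c_p)>0$ supplied by Lemma \ref{criteria l2}. The additional work you do to identify the implicitly defined branch $\gamma$ with $c_p=\inf\s_p$ (via strict negativity of $h_p$ on $(0,c_p)$ and the resulting continuity of $p\mapsto c_p$) carefully addresses a point the paper's short proof leaves implicit.
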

	\begin{proof}
		Define $\psi:\D^*\times\R_+$ by $\psi(p,c):=h_p(c)$. By Proposition \ref{nup C1}, $\psi$ is 
		differentiable and has partial derivatives
		$$\frac{\partial}{\partial c}\psi(p,c)=f'(\nu_p+c)+f'(\nu_p-c)=h'_p(c),$$
		$$\frac{\partial}{\partial p}\psi(p,c)=(f'(\nu_p+c)-f'(\nu_p-c))\frac{d\nu_p}{dp},$$
		both of which are jointly continuous in $p$ and $c$. Thus $\psi$ is continuously differentiable.
		By Lemma \ref{criteria l2}, $\frac\partial{\partial c}\psi(p,c_p)>0$ for all $p\in\D^*$ and so the 
		continuous differentiability of $p\mapsto c_p$ follows from \eqref{hp(cp)=0} and the implicit 
		function theorem.
	\end{proof}
		
	\begin{lemma}\label{criteria l4}
		For any $p\in\D^*$, $\frac{dc_p}{dp}$ and $\frac{d\nu_p}{dp}$ have the same sign, and 
		$\big|\frac{dc_p}{dp}\big|>\big|\frac{d\nu_p}{dp}\big|$ if they are non-zero.
	\end{lemma}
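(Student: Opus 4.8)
The plan is to differentiate the defining relation $h_p(c_p)=0$ implicitly and then read off both claims from the signs of $f'$ at the two points $\nu_p+c_p$ and $\nu_p-c_p$, whose locations relative to the mode were already pinned down in the proof of Lemma \ref{criteria l2}. Since Lemma \ref{criteria l3} already records that $\psi(p,c):=h_p(c)$ is $C^1$ with
$$\frac{\partial\psi}{\partial c}(p,c)=f'(\nu_p+c)+f'(\nu_p-c)=h_p'(c),\qquad \frac{\partial\psi}{\partial p}(p,c)=\big(f'(\nu_p+c)-f'(\nu_p-c)\big)\frac{d\nu_p}{dp},$$
I would apply the implicit function theorem to $h_p(c_p)=0$, using $\frac{\partial\psi}{\partial c}(p,c_p)=h_p'(c_p)>0$ from Lemma \ref{criteria l2}, to obtain
$$\frac{dc_p}{dp}=-\frac{f'(\nu_p+c_p)-f'(\nu_p-c_p)}{f'(\nu_p+c_p)+f'(\nu_p-c_p)}\cdot\frac{d\nu_p}{dp}.$$
Writing $A:=f'(\nu_p+c_p)$ and $B:=f'(\nu_p-c_p)$, this reads $\frac{dc_p}{dp}=-\frac{A-B}{A+B}\cdot\frac{d\nu_p}{dp}$.

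Next I would determine the signs of $A$ and $B$. From \eqref{nup+cp>theta} we have $\nu_p+c_p>\theta_2>\nu_0$, so $f$ is strictly decreasing there by Lemma \ref{criteria l1}, whence $A<0$. From \eqref{nup-cp<nu0} we have $0<\nu_p-c_p<\nu_0$, so $f$ is strictly increasing there and $B>0$. Consequently $A-B<0$, while Lemma \ref{criteria l2} gives $A+B=h_p'(c_p)>0$; hence the coefficient $-\frac{A-B}{A+B}$ is strictly positive. This immediately yields the first claim: $\frac{dc_p}{dp}$ and $\frac{d\nu_p}{dp}$ share the same sign, and both vanish simultaneously.

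For the magnitude, I would note that since $A-B<0<A+B$ we have $\big|\frac{A-B}{A+B}\big|=\frac{B-A}{B+A}$, and this ratio exceeds $1$ precisely because $B-A>B+A$ is equivalent to $A<0$, which we have established. Therefore
$$\left|\frac{dc_p}{dp}\right|=\frac{B-A}{B+A}\left|\frac{d\nu_p}{dp}\right|>\left|\frac{d\nu_p}{dp}\right|$$
whenever $\frac{d\nu_p}{dp}\neq0$ (in which case $\frac{dc_p}{dp}\neq0$ as well, by the same formula). The argument involves no genuine obstacle beyond the implicit differentiation; the crux is simply that the placement of $\nu_p\pm c_p$ relative to the mode, recorded for Lemma \ref{criteria l2}, forces $A<0<B$, and it is exactly this single sign condition $A<0$ that simultaneously makes the coefficient positive and of magnitude larger than $1$.
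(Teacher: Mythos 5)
Your proposal is correct and follows essentially the same route as the paper: implicit differentiation of $h_p(c_p)=0$ (the paper differentiates the identity $f(\nu_p+c_p)-f(\nu_p-c_p)=0$ directly, which yields the identical formula), followed by the sign analysis $f'(\nu_p+c_p)<0<f'(\nu_p-c_p)$ from Lemma \ref{criteria l1} together with \eqref{nup+cp>theta} and \eqref{nup-cp<nu0}, and positivity of the denominator from Lemma \ref{criteria l2}. Your explicit equivalence $\frac{B-A}{B+A}>1\iff A<0$ merely spells out what the paper leaves as ``one can see immediately,'' so there is no substantive difference.
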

	\begin{proof}
		By \eqref{hp(cp)=0}, we have $f(\nu_p+c_p)-f(\nu_p-c_p)=0$. The left side is differentiable in $p$ 
		by Lemma \ref{criteria l3}, so taking derivatives, we obtain
		\begin{equation}\label{dp l4}
			f'(\nu_p+c_p)\left(\frac{d\nu_p}{dp}+\frac{dc_p}{dp}\right)
				- f'(\nu_p-c_p)\left(\frac{d\nu_p}{dp}-\frac{dc_p}{dp}\right) = 0.
		\end{equation}
		Rearranging yields
		$$\frac{dc_p}{dp} = \frac{d\nu_p}{dp}
			\left(\frac{f'(\nu_p-c_p)-f'(\nu_p+c_p)}{f'(\nu_p-c_p)+f'(\nu_p+c_p)}\right),$$
		where the fraction is well-defined with positive denominator by Lemma \ref{criteria l2}. The 
		numerator is positive by Lemma \ref{criteria l1}, \eqref{nup+cp>theta}, and \eqref{nup-cp<nu0}, so 
		$\frac{dc_p}{dp}$ has the same sign as $\frac{d\nu_p}{dp}$. If $\frac{d\nu_p}{dp},\frac{dc_p}{dp}
		>0$, then one can see immediately from \eqref{dp l4} that $\frac{dc_p}{dp}>\frac{d\nu_p}{dp}$. 
		The reverse also follows.
	\end{proof}
	
	\noindent Our final lemma concerns a criterion for pointwise increasingness of $\nu_p$.
	
	\begin{lemma}\label{criteria l5}
		Fix $p\in\D\setminus\{0\}$. If
		\begin{equation}\label{slope condition}
			(\nu_p-c_p)\frac{f'(\nu_p+c_p)}{f(\nu_p+c_p)} > -1,
		\end{equation}
		then $\nu$ is increasing at $p$.
	\end{lemma}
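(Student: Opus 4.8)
The plan is to verify the three hypotheses of Lemma~\ref{Kovchegov L2} at the crossover $c=c_p$. Since $X$ is supported on $(0,\infty)$ we have $R-\nu_p=\infty\ge\nu_p=\nu_p-L$, so the support requirement of Lemma~\ref{Kovchegov L2} holds automatically, and it remains only to check (a) $f(\nu_p-c_p)=f(\nu_p+c_p)$, (b) $f(\nu_p-x)>f(\nu_p+x)$ for $0<x<c_p$, and (c) $f(\nu_p-x)<f(\nu_p+x)$ for $x>c_p$. Part (a) is exactly \eqref{hp(cp)=0}, and part (b) is immediate from $c_p=\inf\s_p$, since every $x<c_p$ lies outside $\s_p$ and hence satisfies $h_p(x)\le0$ (the relaxed form of Lemma~\ref{Kovchegov L2} absorbs the non-strictness). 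All the content is in part (c), namely $h_p(x)>0$ for $x>c_p$. The range $x\ge\nu_p$ is trivial, because then $\nu_p-x\le0$ leaves the support and $f(\nu_p-x)=0<f(\nu_p+x)$; thus the real task is to show $h_p>0$ on $(c_p,\nu_p)$.

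Write $a:=\nu_p-c_p$ and $b:=\nu_p+c_p$, and recall $0<a<\nu_0$ by \eqref{nup-cp<nu0} and $b>\theta_2$ by \eqref{nup+cp>theta}, so that by Lemma~\ref{criteria l1} the density $f$ is convex and decreasing on $[b,\infty)$. I would first control the right part using this convexity: for $x\in(c_p,\nu_p)$ the right argument $\nu_p+x$ exceeds $b$, so $f$ lies above its tangent line at $b$, giving $f(\nu_p+x)\ge f(b)+f'(b)(x-c_p)$. Because $x-c_p=a-(\nu_p-x)$ and $f(a)=f(b)$, the slope hypothesis \eqref{slope condition}, rearranged as $f'(b)>-f(b)/a$, upgrades this to the linear lower bound
\[
	f(\nu_p+x)\ >\ f(\nu_p-c_p)\,\frac{\nu_p-x}{\nu_p-c_p},
\]
which is precisely the statement that the tangent line at $b$ remains positive all the way down to $\nu_p-x=0$. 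Guaranteeing that this lower bound does not degenerate before the left argument reaches the origin is exactly the role played by \eqref{slope condition}.

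It then suffices to dominate the left part by the same linear function, i.e.\ to show $f(\nu_p-x)\le f(\nu_p-c_p)(\nu_p-x)/(\nu_p-c_p)$ on $(c_p,\nu_p)$; this is the step I expect to be the main obstacle, since the behavior of $f$ across the inflection point $\theta_1$ forces a case split. On the sub-interval where the left argument $\nu_p-x$ lies in the convex zone $(0,\theta_1)$, the desired estimate is the star-shapedness $f(y)/y\le f(a)/a$, which I would obtain either from convexity of $f$ through the origin on $(0,\theta_1)$ or, more robustly, from conditions~(1) and~(2); combined with the display above this yields $h_p>0$ there. On the complementary sub-interval, where $\nu_p-x$ lies in the concave zone $(\theta_1,a)$ near the mode, the linear comparison breaks down, and I would instead argue directly that $h_p''(x)=f''(\nu_p+x)-f''(\nu_p-x)>0$, since $f''>0$ on the convex tail while $f''<0$ in the concave zone; as $h_p(c_p)=0$ and $h_p'(c_p)\ge0$ by \eqref{hp'(cp)>=0}, convexity of $h_p$ on this sub-interval already forces $h_p>0$. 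The delicate point is to glue the two regimes at the value of $x$ with $\nu_p-x=\theta_1$, verifying that $h_p$ is positive with nonnegative slope at the junction so that the tangent-line regime can take over, thereby propagating positivity of $h_p$ across all of $(c_p,\nu_p)$ and completing condition~(c).
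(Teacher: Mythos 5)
Your reduction to Lemma \ref{Kovchegov L2}, your right-part bound (tangent line at $b=\nu_p+c_p$ upgraded by \eqref{slope condition} to $f(\nu_p+x)>f(\nu_p-c_p)\frac{\nu_p-x}{\nu_p-c_p}$, which is exactly the paper's chord line $\ell$), and your treatment of the concave zone are all sound; in fact the concave-zone argument via $h_p''=f''(\nu_p+x)-f''(\nu_p-x)\ge 0$ together with \eqref{hp(cp)=0}, \eqref{hp'(cp)>=0} and the infimum property of $c_p$ is a clean alternative to the paper's comparison with an auxiliary line. But there is a genuine gap in the remaining case, namely when $a:=\nu_p-c_p\in(\theta_1,\nu_0)$ and the left argument $y=\nu_p-x$ has moved into $(0,\theta_1)$. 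There you need $f(y)\le f(a)\,y/a$, and neither of your two proposed sources delivers it. Convexity of $f$ through the origin only gives $f(y)/y\le f(\theta_1)/\theta_1$, and the missing link $f(\theta_1)/\theta_1\le f(a)/a$ can fail outright: since $f$ is concave on $(\theta_1,a)$, it can plateau there, so that $f(\theta_1)$ is close to $f(a)$ while $\theta_1/a$ is small, e.g. $f(\theta_1)\approx 0.9\,f(a)$ with $\theta_1=a/2$ gives $f(\theta_1)>f(a)\theta_1/a$. Conditions (1) and (2) do not repair this either: condition (1) is the lower bound $f'/f>1/\nu_0$ on $(0,\theta_1)$, which is strictly weaker than the bound $f'/f\ge 1/y$ needed to make $f(y)/y$ nondecreasing (indeed $1/\nu_0<1/y$ there), and in the plateau example condition (1) holds while star-shapedness fails. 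Knowing $h_p>0$ with $h_p'\ge0$ at the junction $x=\nu_p-\theta_1$ does not propagate positivity forward, because on this range both $f(\nu_p+x)$ and $f(\nu_p-x)$ are decreasing and $h_p''$ has no definite sign (both arguments sit in convex zones).

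The paper closes exactly this case with an idea your proposal lacks: a second, shallower comparison line $\tilde\ell$ through $\big(\nu_p+c_p,f(\nu_p+c_p)\big)$ with slope $-f'(\nu_p-c_p)$. Lemma \ref{criteria l2} (i.e. $h_p'(c_p)>0$, which requires $p\in\D^*$) plus convexity of $f$ on $(\theta_2,\infty)$ puts the right part above $\tilde\ell$; and precisely when the chord comparison fails, one gets $f'(a)<f(a)/a$, which forces the root of $\tilde\ell$ beyond $2\nu_p$. Then $\tilde\ell$ dominates the left part at both endpoints of the convex zone ($\tilde\ell(2\nu_p-\theta_1)>f(\theta_1)$ from the concave-zone step, and $\tilde\ell(2\nu_p)>0=f(0)$), so convexity of $f$ on $(0,\theta_1)$ finishes the argument. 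Without this dichotomy and the line $\tilde\ell$ (or an equivalent device), your proof covers only the paper's Case 1 and the concave portion of Case 2, so as written it is incomplete.
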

	\begin{proof}
		Since $f(\nu_p+c_p)=f(\nu_p-c_p)$, we may rearrange to obtain
		$$-\frac{f(\nu_p-c_p)}{\nu_p-c_p}<f'(\nu_p+c_p).$$
		Define the line
		$$\ell(x):=-\frac{xf(\nu_p-c_p)}{\nu_p-c_p}+\frac{2\nu_pf(\nu_p-c_p)}{\nu_p-c_p},\ x\ge\nu_p+c_p,$$
		and note that
		\begin{equation}\label{line bds}
			\begin{split}
				\ell(\nu_p+c_p) &= f(\nu_p-c_p)=f(\nu_p+c_p), \\
				\ell(2\nu_p) &= 0=f(0).
			\end{split}
		\end{equation}
		Note that \eqref{slope condition} implies $f'(\nu_p+c_p)=\ell'(\nu_p+c_p)$, so by convexity of 
		$f$ on $(\theta_2,\infty)$, we have $f'>\ell'$ on $(\nu_p+c_p,\infty)$. Via integration we obtain 
		\begin{equation}\label{line below}
			f(\nu_p+c)>\ell(\nu_p+c)
		\end{equation}
		for $c>c_p$. We now split into two cases to show $f(\nu_p+c)>f(\nu_p-c)$ for $c>c_p$.
		
		\textbf{Case 1.} Suppose $\nu_p-c_p\in(0,\theta_1]$. By \eqref{line bds} and the convexity of 
		$f$ on this interval, we have $\ell(x)>f(2\nu_p-x)$ for $x\in(\nu_p+c_p,2\nu_p)$, making use of 
		the fact that convexity is preserved under reflection and translation. A change of variables 
		gives the inequality $\ell(\nu_p+c)>f(\nu_p-c)$ for $c\in(c_p,\nu_p)$. Combining with 
		\eqref{line below} yields $f(\nu_p+c)>f(\nu_p-c)$ for $c\in(c_p,\nu_p)$. If $c\ge\nu_p$, 
		then $f(\nu_p+c)>0=f(\nu_p-c)$ and we are done.
		
		\textbf{Case 2}. Suppose instead $\nu_p-c_p\in(\theta_1,\nu_0)$. If it happens that 
		$\ell(\nu_p+c)>f(\nu_p-c)$ for all $c\in(c_p,\nu_p)$, then the argument in Case 1 applies and we 
		are done. Otherwise, let
		$$\tilde\ell(x):=-(x-\nu_p-c_p)f'(\nu_p-c_p)+f(\nu_p+c_p),\ x\ge \nu_p+c_p,$$
		be the line such that
		$$\tilde\ell(\nu_p+c_p)=f(\nu_p+c_p)$$
		and
		$$\tilde\ell(\nu_p+c_p+f(\nu_p+c_p)/f'(\nu_p-c_p))=0.$$
		Lemma \ref{criteria l2} and convexity imply $f'(\nu_p+c)>\tilde\ell'(\nu_p+c)$ and thus 
		$f(\nu_p+c)>\tilde\ell(\nu_p+c)$ for all $c>c_p$. By concavity of $f$ near $\nu_p-c_p$, one 
		can easily see that $\tilde\ell(\nu_p+c)>f(\nu_p-c)$ for $c\in(c_p,\nu_p-\theta_1]$. It 
		remains to show $\tilde\ell(\nu_p+c)>f(\nu_p-c)$ for $c\in(\nu_p-\theta_1,\nu_p)$.
		
		We have by assumption that $f(\nu_p-c)>\ell(\nu_p+c)$ for $c$ in a right neighborhood of $c_p$.
		We also have by \eqref{line bds} that $f(\nu_p-c_p)=\ell(\nu_p+c_p)$. It follows that 
		$$-f'(\nu_p-c_p)>\ell'(\nu_p+c_p)=-\frac{f(\nu_p-c_p)}{\nu_p-c_p}.$$
		Substituting with $f(\nu_p-c_p)$ with $f(\nu_p+c_p)$ and rearranging yields
		$$\nu_p+c_p+\frac{f(\nu_p+c_p)}{f'(\nu_p-c_p)}>2\nu_p.$$
		The left side is precisely the root of $\tilde\ell$ whereas the right side is a root of $f$. 
		We showed previously that $\tilde\ell(2\nu_p-\theta_1)>f(\theta_1)$. Convexity of $f$ on 
		$(0,\theta_1)$ implies $\tilde\ell(\nu_p+c)>f(\nu_p-c)$ for $c\in(\nu_p-\theta_1,\nu_p)$, 
		and we are done.
		
		We have now shown that, in general, $f(\nu_p+c)>f(\nu_p-c)$ for $c>c_p$. We also know by 
		definition of $c_p$ that $f(\nu_p+c)\le f(\nu_p-c)$ for $c<c_p$. Thus the conditions of Lemma 
		\ref{Kovchegov L2} are satisfied and so $\nu$ is increasing at the point $p$.
	\end{proof}
	
		\noindent To prove the main theorem, it suffices to show \eqref{slope condition} for all $p\in\D^*$ 
		and that $\D^*=\D\setminus\{0\}$. If we have conditions (1) and (2) as they are written, then \eqref{slope condition} 
		holds for all $p\in\D^*$ immediately by \eqref{nup+cp>theta} and \eqref{nup-cp<nu0}. Suppose instead we only 
		have the weaker condition as stated in Remark \ref{criteria relaxations}(b):
		\begin{equation*}
			\begin{split}
				f'/f > 1/(\nu_1-c_1) &\text{ on } (0,\theta_1), \\
				f'/f > -1/(\nu_1-c_1) &\text{ on } (\nu_1+c_1,\infty).
			\end{split}
		\end{equation*}
		Let $u(p):=\nu_p-c_p$ on $\D^*$, so $u$ is differentiable on its domain. Also note that $u>0$ by 
		\eqref{nup-cp<nu0}. Suppose for some $p'\in\D^*$ that $u(p')\le\nu_1-c_1$. Then
		$$(\nu_{p'}-c_{p'})\frac{f'(\nu_{p'}+c_{p'})}{f(\nu_{p'}+c_{p'})}>-1,$$
		i.e., \eqref{slope condition} holds for $p'$ and so $\nu_p$ is increasing at $p'$. Lemma 
		\ref{criteria l4} implies $u$ is decreasing at $p'$. It follows that for all $p\ge p'$, $u$ is 
		decreasing and so $u(p)\le\nu_1-c_1$. In particular, $\nu_p$ is increasing for all $p\ge p'$.
		
		By assumption, $1\in\D^*$ and trivially $u(1)\le\nu_1-c_1$. Since $\D^*\in[1,\infty)$, then 
		$\nu_p$ is increasing for all $p\in\D^*$. Recall that $p\in\D^*$ if $\nu_p>(\nu_0+\theta_2)/2$. 
		Since $\nu_p$ is increasing at $p=1$ and for all $p\in\D^*$, then $\nu_p>(\nu_0+\theta_2)/2$ 
		for all $p\in\D\setminus\{0\}$, hence $\D\setminus\{0\}=\D^*$.\footnote{\ Clearly the argument 
		for $\D^*=\D\setminus\{0\}$ still applies if we use the alternative definition for $\D^*$.}
	\end{proof}
	
	\begin{proof}[Proof of Corollary \ref{criteria 2}]
		The proof directly extends to the case where $f''$ only has a single root $\theta>\nu_0$ by 
		setting $\theta_1=0$. In fact, conditions (1) and (2) are not necessary. Indeed, we use them 
		once in the proof of Lemma \ref{criteria l2} in the case $\nu_p-c_p\in(0,\theta_1)$, but this 
		is no longer relevant if $f''$ has only one root. The only other time we use the conditions 
		is to prove that \eqref{slope condition} holds for all $p\in\D^*$. However, if $f''$ has only
		one root then, \eqref{slope condition} holds automatically. Indeed, note that $f$ is concave 
		and increasing near $\nu_p-c_p$ for all $p\in\D^*$, hence
		$$\frac{f(\nu_p-c_p)}{\nu_p-c_p}>f'(\nu_p-c_p).$$
		Since $f'(\nu_p+c_p)<0$, then by substituting $f(\nu_p+c_p)=f(\nu_p-c_p)$, we have
		$$(\nu_p-c_p)\frac{f'(\nu_p+c_p)}{f(\nu_p+c_p)}>\frac{f'(\nu_p+c_p)}{f'(\nu_p-c_p)}.$$
		The right side dominates $-1$ as a consequence of Lemma \ref{criteria l2}, so we arrive at 
		\eqref{slope condition}. This proves the corollary.
	\end{proof}
		

\section{Discussion}
This work attempts to broaden our understanding of true skewness by demonstrating the true skewness of several additional distributions and by establishing simpler criteria for which one can conclude a distribution is truly skewed. Theorems \ref{Levy TPS}--\ref{Skew-normal TPS}, which establish the parameter regions for which the L\'evy, chi-squared, Weibull, and skew-normal distributions are truly positively skewed, all rely crucially on showing that the conditions of Lemma \ref{Kovchegov L2} hold for every relevant value of $p$. This is done by analyzing what we have called the ``log density ratio of the left and right parts,'' which for given $p$ is
$$R_p(x) = \log\left(\frac{f(\nu_p-x)}{f(\nu_p+x)}\right), \qquad x\in[0,\nu_p),$$
where $f$ is the density function of the distribution in question. The conditions of Lemma \ref{Kovchegov L2} hold if one can show that $R_p(\cdot)$ has exactly one positive root $x_0$, satisfying $R_p(x)>0$ for $0<x<x_0$ and $R_p(x)<0$ for $x>x_0$. In Theorems \ref{Levy TPS}, \ref{chi-squared TPS}, and \ref{Weibull TPS}, this can be shown in quite a straightforward manner simply by finding the critical points of $R_p(\cdot)$. Such an approach may likely be applied to other visibly skewed distributions for which a closed-form expression for its density function exists.

However, for the skew-normal distribution, Theorem \ref{Skew-normal TPS}, the absence of a closed-form expression for its density function makes the computation of the critical points of $R_p(\cdot)$ intractible. Instead, we exploit the fact that the log derivative of a distribution function $\Psi$ is simply $\psi/\Psi$, where $\psi$ is the corresponding density function. The ratio $\psi/\Psi$ is the reciprocal of what is commonly referred to as the {\em hazard rate} of the distribution $\Psi$, which has been studied with some detail for most well-known distributions. In the proof of Theorem \ref{Skew-normal TPS}, we encounter the reciprocal of the hazard rate of the standard Gaussian, which goes by the special name of the {\em Mills' ratio}. The Mills' ratio, as one may expect, has been very well-studied, allowing us to employ a closed-form expression, which very tightly bounds the quantities we care about, to prove the required properties of the skew-normal distribution's log density ratio of the left and right parts.

Indeed, the proof of Theorem \ref{Skew-normal TPS} identifies a key yet unsurprising connection between the notion of true skewness, which is fundamentally a comparison of the rate of decay of the two sides of a distribution, and the hazard rate. It suggests that the notion of true skewness is an accurate reflection, and in a much more rigorous fashion than Pearson's coefficients of skewness, of the essence of what we imagine {\em skewness} to mean. More practically, the method by which we prove Theorem \ref{Skew-normal TPS} should work for a larger variety of distributions; a reasonable first direction would be skewed versions of other symmetric distributions, as introduced in, for example, Chapter 1 of \cite{azzalini2013skew}.

The remainder of this section discusses sums and products of truly skewed random variables; examines whether true skewness extends to the discrete case; and provides an interpretation of true skewness for multivariate distributions. 

\subsection{Sums and products of truly skewed random variables}

The goal of this paper and this discussion is to present tools for establishing a larger class of distributions which are truly skewed. A natural question is whether or not true skewness is preserved under certain ``transformations'' of truly skewed random variables. 

Theorem \ref{u(X)} gives an affirmative answer to this question, but with limitations: transforming a random variable with a strictly decaying distribution by an increasing convex function preserves its true positive skewness. One might also expect that sums of truly positively skewed random variables preserve true positive skewness, but the rather strong requirement that the $p$-means of a truly positively skewed random variable be {\em strictly} increasing allows one to construct rather straightforward counterexamples. Indeed, we may even take the two summands to be identically distributed. For $1/2<\lambda<1$, let $X$ and $Y$ be i.i.d. with density
\begin{equation}\label{counterexample density}
	f(x):=\begin{cases}
		\lambda & 0 < x \le 1 \\
		1-\lambda & 1 \le x < 2 \\
		0 & \text{otherwise}.
	\end{cases}
\end{equation}
Since $f$ is decreasing on its support, $X$ and $Y$ are truly positively skewed by Proposition \ref{relaxed Kovchegov P2}. One can easily compute the density function $f_Z$ of $Z=X+Y$, the convolution of $f$ with itself and see that, depending on the value of $\lambda$, the $p$-means of $Z$ are not always increasing. For instance, for $\lambda=3/5$, computing numerically the median $\nu_1^Z$ and the integrals in \eqref{dnup numerator} yields $\nu_1^Z\approx 1.786$ and
$$\int_0^{4-\nu_1}\log y\ f_Z(\nu_1+y)dy - \int_0^{\nu_1} \log y\ f_Z(\nu_1-y)dy\approx-0.000699 < 0,$$
which implies that $p\mapsto\nu_p^Z$ is decreasing at $p=1$, and so $Z$ is not truly positively skewed.\footnote{\ Numerical computations were carried out using Wolfram Mathematica, version 12.3.0.0.}

On the other hand, it may be more fruitful to start with random variables with monotone density functions. One class of truly positively skewed densities that is closed under summation is the class of ``decreasing linear densities,'' of the form
$$f(x) = h - \frac{h^2x}{2}, \qquad x\in[0,2/h],$$
for arbitrary $h>0$.

\subsection{True skewness of discrete distributions}

In the discrete case, even the simplest sums of truly skewed random variables fail to remain truly skewed. Take $X$ and $Y$ to be independent $\Bernoulli(1/3)$ random variables; one can show that these are truly positively skewed quite easily. Their sum is $\Binom(2,1/3)$, which has median 1 and mean 2/3, hence $\nu_2<\nu_1$, and true positive skewness fails to hold. Indeed, the notion of true skewness is finnicky for discrete distributions, at least in the way it has been formulated here and in \cite{Kovchegov21}: this could be due to the interpretation of discrete distributions as limits of successfully sharper (continuous) bump functions, in which case distributions like the $\Binom(n,1/3)$ are no longer unimodal but in fact have $n$ modes.
	
	The proof techniques used in this paper often involve taking the log of a ratio of the density function 
	of a distribution. In particular, the logarithm of a product of two entities is equal to the sum of the individual 
	logarithms of those entities. For this reason,  it seems natural to examine how true skewness behaves under 
	the products of random variables and whether it is preserved. In particular, it seems that taking the logarithm 
	of the density of product of two random variables would yield a sum which could then imply true skewness 
	depending on how one defines the two random variables. \\
	
%
%


\subsection{True skewness of continuous multivariate distributions}\label{extended settings section}
The definition of \textbf{Fr\'echet $p$-mean} (Def. \ref{nup def 1}) extends naturally to multivariate distributions as follows.

\noindent Let $\mathbf{X} \in \mathbb{R}^k$ be a random vector. The $p$-mean $\boldsymbol{\nu}_p = \left(\nu_p^{(1)}, \nu_p^{(2)}, \ldots, \nu_p^{(k)}\right)$ of $\mathbf{X}$ is defined 
\begin{equation}\label{multivariate nu_p}
	\boldsymbol{\nu}_p = \argmin_{\mathbf{a} \in \mathbb{R}^k} E \big[\|\mathbf{X} - \mathbf{a}\|^{p}\big],
\end{equation}
where $\|\cdot\|$ is the usual Euclidean norm. 

In the univariate setting, true skewness corresponds to the sign of $d\nu_p/dp$ representing the direction of trajectory $\boldsymbol{\nu}_p$. 
Following \cite{Kovchegov21}, we adjust true skewness accordingly. We let 
$$\boldsymbol{\tau}_p := 
\frac{d\boldsymbol{\nu}_p}{dp} \Big/ \left\| \frac{d\boldsymbol{\nu}_p}{dp} \right\|$$
denote the unit tangent vector for trajectory of $\boldsymbol{\nu}_p$ in $\mathbb{R}^k$. We will illustrate true skewness in a multivariate setting by means of an example.
It was conjectured in \cite{Kovchegov21} that the limiting direction vector $\lim\limits_{p \to \sup \mathcal{D}} \boldsymbol{\tau}_p$ with $p$ increasing to the rightmost bound in its domain $\mathcal{D}$ may be interpreted as the ``direction'' of true skewness.

As an example, considered a {\it multivariate skew normal distribution} defined in \cite{lachos2014multivariate}, the  probability density function of a multivariate skew-normal random vector $\mathbf{Y} \in \mathbb{R}^k$ is 
\begin{equation}\label{MVSN density}
	f(\mathbf{y}) = 2 \phi_k(\mathbf{y}; \boldsymbol{\mu}, \boldsymbol{\Sigma}) \Phi_1(\boldsymbol{\lambda}^\top \boldsymbol{\Sigma}^{-1/2}(\mathbf{y} - \boldsymbol{\mu})) \qquad \mathbf{y} \in \mathbb{R}^k
\end{equation}
where $\phi_k(\cdot\,;\boldsymbol{\mu}, \boldsymbol{\Sigma})$ is the density function of the $k$-variate normal distribution with mean  $\boldsymbol{\mu}$ and covariance matrix $\boldsymbol{\Sigma}$, and $\Phi_1(\cdot)$ is the cumulative distribution function of the univariate standard normal distribution. Taking $\boldsymbol{\lambda} = 0$ recovers the standard multivariate normal distribution with density $\phi_k(\cdot; \boldsymbol{\mu}, \boldsymbol{\Sigma})$ . We refer to $\boldsymbol{\lambda}$ as the \textit{skewness parameter} vector.
\begin{figure}[h]
	\centering
	\begin{subfigure}[b]{0.30\textwidth}
		\centering
		\includegraphics[width=\textwidth]{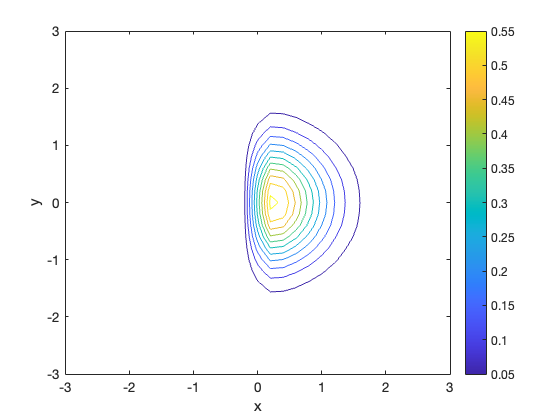}
		\caption{$\boldsymbol{\lambda} = (5,0)^\top$}
	\end{subfigure}
	\begin{subfigure}[b]{0.30\textwidth}
		\centering
		\includegraphics[width=\textwidth]{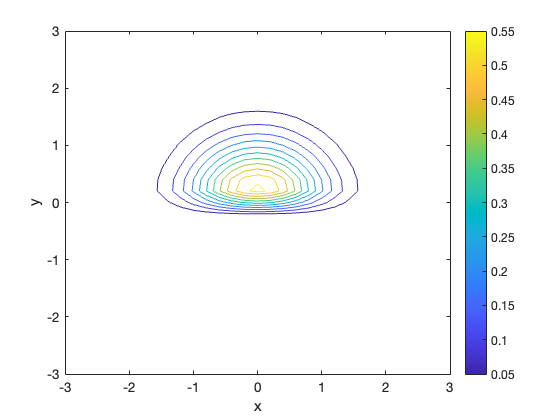}
		\caption{$\boldsymbol{\lambda} = (0,5)^\top$}
	\end{subfigure}
	\begin{subfigure}[b]{0.30\textwidth}
		\centering
		\includegraphics[width=\textwidth]{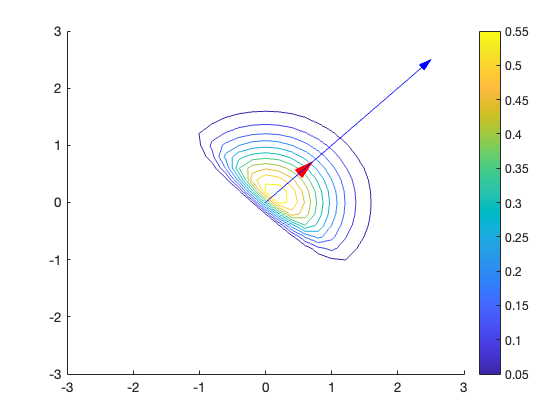}
		\caption{$\boldsymbol{\lambda} = (5,5)^\top$}
	\end{subfigure}
	\caption{Contour plots of 2-variate  skew-normal distributions. Red arrow is $\boldsymbol{\tau}_p$, which is the same for all $p$, and blue arrow is $\boldsymbol{\lambda}$.}
	\label{fig: mvsn distribution plots}
\end{figure}

The arrows plotted in Figure \ref{fig: mvsn distribution plots} motivate the interpretation of multivariate true skewness via direction vectors $\boldsymbol{\tau}_p$, which are naturally co-linear with the skewness parameter vector $\boldsymbol{\lambda}$. 

%

\section*{Acknowledgments}
The majority of this research was done as a part of a Research Experience for Undergraduates (REU) program at the Oregon State University Department of Mathematics funded by NSF grant DMS-1757995. We would like to thank Javier Rojo of Indiana University for his helpful feedback on the concepts considered in this paper. We would also like to acknowledge professor Holly Swisher of Oregon State University who as the director of the REU program provided invaluable perspective throughout the summer.


\newpage

\end{document}